\pgfplotsset{compat=newest}
\pgfplotsset{plot coordinates/math parser=false}
\renewcommand*{\backrefalt}[4]{%
    \ifcase #1 \footnotesize{(Not cited.)}%
    \or        \footnotesize{(Cited on page~#2)}%
    \else      \footnotesize{(Cited on pages~#2)}%
    \fi}
\newcommand{\eqdef}{\overset{\text{def}}{=}} 
\newcommand \RR {\mathbb{R}}
\newcommand \EE {\mathbb{E}}
\newcommand \PP {\mathbb{P}}
\newcommand \Ncal {\mathcal{N}}
\DeclareMathOperator{\sign}{sign}
\DeclareMathOperator{\dom}{dom}
\newcommand{\scp}[2]{\langle #1 \,,\, #2\rangle}
\newcommand{\norm}[2][]{\|#2\|_{#1}}
\newcommand{\set}[2]{\{#1\, |\, #2\}}
\newtheorem{assumption}{Assumption}
\newtheorem{cor}[theorem]{Corollary}
\journalname{}
\begin{document}

\title{Faster Randomized Block Sparse Kaczmarz by Averaging\thanks{\textbf{Funding: }The work of the authors has been supported by the ITN-ETN project TraDE-OPT funded by the European Union’s Horizon 2020 research and innovation programme under the Marie Skłodowska-Curie grant agreement No 861137. This work represents only the author’s view and the European Commission is not responsible for any use that may be made of the information it contains.
}}


\author{Lionel~Tondji \and Dirk~A. Lorenz  
}


\institute{Lionel Tondji
  \at Institute for Analysis and Algebra, TU Braunschweig, 38092 Braunschweig, Germany,\\
  \email{l.ngoupeyou-tondji@tu-braunschweig.de}
  \and Dirk A. Lorenz
  \at Institute for Analysis and Algebra, TU Braunschweig, 38092 Braunschweig, Germany,\\
  \email{d.lorenz@tu-braunschweig.de}
}

\date{Received: date / Accepted: date}

\maketitle

\begin{abstract}
The standard randomized sparse Kaczmarz (RSK) method is an algorithm to compute sparse solutions of linear systems of equations and uses sequential updates, and thus, does not take advantage of parallel computations. In this work, we introduce a parallel (mini batch) version of RSK based on averaging several Kaczmarz steps. Naturally, this method allows for parallelization and we show that it can also leverage large over-relaxation. We prove linear expected convergence and show that, given that parallel computations can be exploited, the method provably provides faster convergence than the standard method. This method can also be viewed as a variant of the linearized Bregman algorithm, a randomized dual block coordinate descent update, a stochastic mirror descent update, or a relaxed version of RSK and we recover the standard RSK method when the batch size is equal to one. We also provide estimates for inconsistent systems and show that the iterates converges to an error in the order of the noise level. Finally, numerical examples illustrate the benefits of the new algorithm.

\keywords{Randomized Kaczmarz \and Sparse solutions \and Parallel methods}
\subclass{65F10 \and 68W20 \and 68W10 \and 90C25}
\end{abstract}

\section{Introduction}
\label{intro}
In this work we are concerned with the fundamental problem of approximating sparse solutions of large scale linear systems of the form
\begin{equation}
\label{eq:linear_system}
    \mathbf{A} x = b
\end{equation}
with matrix $\mathbf{A} \in \RR^{m \times n}$ and right hand side $b \in \RR^{m}$. Linear systems like (\ref{eq:linear_system}) arise in several fields of engineering and physics problems, such as sensor networks ~\cite{tropp2011improved}, signal processing~\cite{hanke1990acceleration}, partial differential equations~\cite{olshanskii2014iterative}, filtering~\cite{khan2007distributed}, computerized tomography~\cite{hounsfield1973computerized}, optimal control~\cite{patrascu2017nonasymptotic}, inverse problems~\cite{jiao2017preasymptotic,rabelo2022kaczmarzillposed} and machine learning, to name just a few.
When $\mathbf{A}$ is too large to fit in memory, direct methods for solving equation (\ref{eq:linear_system}) are not feasible and iterative methods are preferred.  As long as one can afford full matrix vector products or the system matrix fits in memory, Krylov methods including the conjugate gradient (CG) algorithms~\cite{stiefel1952methods} are the industrial standard. On the other hand, randomized methods such as the randomized (block) Kaczmarz~\cite{Kac37,SV09} and coordinate descent method~\cite{nesterov2012efficiency} are effective if a single matrix vector product is too expensive and in some situations are even more efficient than CG method (see, e.g.,~\cite{SV09} for an example). Linear convergence of the Kaczmarz method has been shown in the randomized case in~\cite{SV09} and~\cite{popa2018convrate} analyzes convergence rates in the deterministic case. Moreover block Kaczmarz methods~\cite{moorman2021randomized,necoara2019faster,needell2014paved,P15,richtarik2020stochastic} have received much attention for their high efficiency for solving (\ref{eq:linear_system}) and distributed implementations.
In this work we propose and analyze the randomized sparse Kaczmarz method~\cite{SL19} and show that parallel computations and averaging as in~\cite{moorman2021randomized} lead to faster convergence.

\subsection{Related Work}
\label{sec:related_work}
\paragraph{\textbf{Randomized Kaczmarz}.}
In the large data regime, the randomized Kaczmarz (RK) is a popular iterative method for solving linear systems. In each iteration\footnote{We use subscript indices for components of a vector, columns or rows of a matrix, and also as iteration indices. But the meaning should always be clear from the context.}, a row vector $a_i^T$ of $\mathbf{A}$ is chosen at random from the system (\ref{eq:linear_system}) and the current iterate $x_k$ is projected onto the solution space
of that equation to obtain $x_{k+1}$.  Geometrically, at each iteration
\begin{equation*}
    x_{k+1} = \operatorname*{arg min}_{x \in \RR^n}\, \|x - x_k\|_2^2 \quad \text{s.t.} \quad \langle a_i, x \rangle = b_i.
\end{equation*}
It has been observed that the convergence of RK method can be accelerated by introducing relaxation.
In a relaxed variant of RK, a step is taken in the direction of this projection with the size of the step depending on a relaxation parameter.
Explicitly, the relaxed RK update is given by 
\begin{equation}
\begin{aligned}
\label{eq:rk}
x_{k+1} = x_k - w_{k,i} \tfrac{\scp{a_i}{x_k}-b_i}{\norm[2]{a_i}^2} \cdot a_i \,,
\end{aligned}
\end{equation}
with initial values $x_0=0$, where the $w_{k,i}, \, i \in \{1, \dots, m\}$ are relaxation parameters. Note that this update rule requires low cost per iteration
and storage of order $\mathcal{O} (n)$. For consistent systems
the relaxation parameters must satisfy
\begin{equation}
    0 < \liminf_{k \rightarrow \infty}w_{k,i} \leq \limsup_{k \rightarrow \infty}w_{k,i} < 2
\end{equation}
to ensure convergence~\cite{10.1145/359340.359351}. Fixing the relaxation parameters $w_{k,i} = 1$ for all iterations $k$ and indices $i$ leads to the standard RK method. In~\cite{necoara2019faster}, a block Kaczmarz variant under the name randomized block Kaczmarz (RBK) has been analyzed.  Linear convergence in expectation was shown for consistent systems of equations, with a rate depending on the geometric properties of the matrix, its submatrices, and on the size of the blocks. The convergence rate given in ~\cite{necoara2019faster} depends on the block size and the stochastic conditioning parameter of the most ill-conditioned block of the partition when a partition is used and on the most ill-conditioned block of the entire matrix $\mathbf{A}$ when the indices are sampled i.i.d. with replacement. The paper~\cite{needell2014paved} considers more general sampling strategies such as sampling from a partition of the rows of the matrix. In~\cite{du2020randomized} the authors investigate an extension of the randomized averaged block Kaczmarz method that can solve least squares problems. A parallel version of RK where a weighted average of independent updates is used was studied in~\cite{moorman2021randomized}. They showed that as the number of threads increases, the rate of convergence improves and the convergence horizon for inconsistent systems decreases. Another more general class of block methods are sketch-and-project methods~\cite{richtarik2020stochastic,gower2019adaptive}. For a linear system $\mathbf{A} x = b$, sketch-and-project methods iteratively project the current iterate onto the solution space of a sketched subsystem $\mathbf{S}^T \mathbf{A}x = \mathbf{S}^T b$. In particular, RK is a sketch-and-project method with $\mathbf{S}$ being rows of the identity matrix.

\paragraph{\textbf{Randomized Sparse Kaczmarz.}}
Recently, a new variant of the standard RK method namely the randomized sparse Kaczmarz method (RSK)~\cite{SL19,LWSM14,P15} with almost the same low cost and storage requirements has shown good performance in approximating sparse solutions of large consistent linear systems. It uses two variables $x_k^*$ and $x_k$ and the relaxed RSK update is given by
\begin{equation}
\begin{aligned}
\label{eq:rsk}
x_{k+1}^* &= x_k^* - w_{k,i} \tfrac{\scp{a_i}{x_k}-b_i}{\norm[2]{a_i}^2} \cdot a_i \,,\\
x_{k+1} &= S_{\lambda}(x_{k+1}^*)
\end{aligned}
\end{equation}
with initial values $x_0=x_0^*=0$, $\lambda >0$,
and the soft shrinkage operator 
\[
S_{\lambda}(x) = \max\{|x|-\lambda,0\} \cdot \sign(x)\,.
\]
Fixing the relaxation parameters $w_{k,i} = 1$ for all iterations $k$ and indices $i$ lead to the standard RSK method. 
For consistent systems the iterates of the standard RSK method converge in expectation to the solution of the regularized \emph{Basis Pursuit Problem}
\begin{equation}
\label{eq:RBPP}
\min_{x \in \RR^n} \lambda \cdot \norm[1]{x} + \tfrac{1}{2} \cdot \norm[2]{x}^{2} \quad \mbox{s.t.}\quad \mathbf{A}x=b \,.
\end{equation}
The advantage of \eqref{eq:RBPP} is the strong convexity property of the objective function. It is important to note that (see ~\cite{friedlander2008exact}) for a large but finite parameter $\lambda >0$, the solution of \eqref{eq:RBPP} gives a solution of 
\begin{equation}
\label{eq:BPP}
\min_{x \in \RR^n} \norm[1]{x} \quad \mbox{s.t.}\quad \mathbf{A}x=b \,,
\end{equation}
which is the famous \emph{Basis Pursuit Problem}~\cite{chen1998basispursuit}.
The $\ell_1$-norm has been used in many applications, with the goal of obtaining sparse or even sparsest solutions of underdetermined systems of linear equations and least-squares problems which is the basis of the theory of compressed sensing~\cite{candes2006compressive,donoho2006compressed}. The sparsest solution is given by minimizing the so-called zero-norm, $\norm[0]{x}$ counting the number of nonzero components in $x$. However, it is computationally intractable even for the simplest instances due to its combinatorial nature (it is even strongly NP-complete, see problem [MP5] in the seminal reference~\cite{garey1979NP}). In ~\cite{candes2006robust,donoho2005sparse} reasonable conditions are given under which a solution of \eqref{eq:BPP} is a sparsest solution. In~\cite{schopfer2022extended}, an extension of the RSK with linear expected convergence has been proposed for solving sparse least squares and impulsive noise problems while requiring only one additional column of the system matrix in each iteration.
    
\paragraph{\textbf{Block Sparse Kaczmarz Methods.}}
In this setting, a subset of rows $\mathbf{A}_{\tau_k}$ is used at each iteration, with $\tau_k \subseteq \left\{ 1,\dots,m \right\}$ and $|\tau_k| >1$ where $|\tau_k|$ denote the cardinality of the set of indices $\tau_k$. We usually have two approaches. The first variant is simply a \emph{block generalization} of the basic sparse Kaczmarz and the update is given by
\begin{equation}
\begin{aligned}
\label{eq:bsk}
x_{k+1}^* &= x_k^* - w_{k,i} \tfrac{ \mathbf{A}_{\tau_k}^{T}(\mathbf{A}_{\tau_k}x_k - b_{\tau_k})}{\|\mathbf{A}_{\tau_k}\|^2_2} \,,\\
x_{k+1} &= S_{\lambda}(x_{k+1}^*)
\end{aligned}
\end{equation}
Such block variants are considered, e.g., in~\cite{necoara2019faster,needell2014paved} (with $\lambda=0$) and in \cite{LSW14,P15} for $\lambda\geq 0$ and we refer to these iterative process as \emph{block sparse Kaczmarz} method. When $|\tau_k| = m$, we refer to (\ref{eq:bsk}) as the \emph{linearized Bregman method}~\cite{cai2009convergence,yin2010analysis,LSW14} and when  $|\tau_k| = 1$ as the \emph{randomized sparse Kaczmarz method}~\cite{LWSM14,SL19}. The main drawback of (\ref{eq:bsk}) is that it is not adequate for distributed implementations. The second variant of block sparse Kaczmarz can take advantage of distributed computing: each iteration takes $\eta$ steps of the relaxed randomized sparse Kaczmarz, independently in parallel, averages the results and applies the soft shrinkage to form the next iterate. This leads to the following iteration:
\begin{equation}
\begin{aligned}
\label{eq:rska}
x_{k+1}^* &= x_k^* - \frac{1}{\eta} \sum_{i \in \tau_k} w_i\tfrac{\scp{a_i}{x_k}-b_i}{\norm[2]{a_i}^2} \cdot a_i \,,\\
x_{k+1} &= S_{\lambda}(x_{k+1}^*)
\end{aligned}
\end{equation}
with initial values $x_0=x_0^*=0$, where $\tau_k \subseteq \left\{ 1,\dots,m \right\}$ denotes a random set of $\eta$ row indices sampled with replacement (and the $i$-th row is chosen with probability $p_{i}$) and $w_i$ represents the weight corresponding to the $i$-th row. Such block variants are considered, e.g., in ~\cite{richtarik2020stochastic,gower2019adaptive,moorman2021randomized,necoara2019faster,miao2022greedy} (with $\lambda=0$). Method~\eqref{eq:rska} is the main method presented and analyzed in this paper and we refer to it as the \emph{randomized sparse Kaczmarz with averaging} (RSKA) with more details in Algorithm \ref{alg:RSKA}. Note that the update (\ref{eq:rska}) is easy to implement on distributed computing units, and it is comparable in terms of cost per iteration to the basic sparse Kaczmarz update i.e., of order $\mathcal{O} (\eta n)$. If $\tau_k$ is a set of one index, we recover the relaxed RSK method and if in addition the weights are chosen as $w_i = 1$ for $i \in \left\{ 1,\dots,m \right\}$, we recover the standard RSK method.

\begin{algorithm}[ht]
  \caption{Randomized Sparse Kaczmarz with Averaging (RSKA)}
  \label{alg:RSKA}
  \begin{algorithmic}[1]
    \Require{starting points $x_0=x_{0}^{*}=0\in \RR^n$, matrix $\mathbf{A} \in \RR^{m\times n}$ with rows $0 \not=a_{i}^T\in \RR^{n}$ and vector  $b\in \RR^{m}$, batch size $\eta$, weights $\{w_i\}_{i=1}^{m} \in \RR$ and probabilities $p_{i}$}
    \Ensure{(approximate) solution of $\min_{x \in \RR^n} \lambda \|x\|_1 + \frac{1}{2}\|x\|_2^{2}$ s.t. $\mathbf{A} x=b$}
    \State initialize $k =  0$
    \Repeat
    \State $\tau_k \leftarrow \eta$ indices sampled from $\left\{ 1,\dots,m \right\}$
    \State Compute $\delta_k = \tfrac{1}{\eta} \sum_{i \in \tau_k} w_i \tfrac{\langle a_{i}, x_k \rangle - b_{i}}{\|a_{i}\|^2_2} \cdot a_{i}$ \Comment{In parallel}
    \State Update $x_{k+1}^* = x_k^{*} - \delta_k$
    \State Update $x_{k+1} = S_{\lambda}(x^{*}_{k+1})$
    \State Increment $k = k+1$
    \Until{a stopping criterion is satisfied}
  \end{algorithmic}
\end{algorithm}

\subsection{ Contribution and Organization.}    
\label{subsec:contribution}

To the best of our knowledge, the proposed block variant (\ref{eq:rska}) have not yet been proposed and analyzed for the randomized sparse Kaczmarz method. In this work, we make the following contributions:
\begin{itemize}
    \item We propose a mini batch version termed \emph{RSKA} of the randomized sparse Kaczmarz method, a general algorithm that unifies a variety of other methods such as the randomized Kaczmarz and the randomized sparse Kaczmarz with their relaxed variants. It is theoretically well-motivated, can exploit parallel computation and converges linearly in expectation.
    
    \item We prove that our proposal leads to faster convergence than its standard counterpart. We also validate this empirically and we provide implementations of our algorithm in \texttt{Python}.
\end{itemize}

The remainder of the paper is organized as follows. Section \ref{sec:basicnotions} provides a brief overview on convexity and Bregman distances. In section \ref{sec:interpretations} we give several interpretations of our method. Section \ref{sec:convergence} provides convergence guarantees for our proposed method.  In Section \ref{sec:numerical_experiments}, numerical experiments demonstrate the effectiveness of RSKA and provides several insights regarding its behavior and its hyper-parameters. Finally, Section \ref{sec:conclusion} draws some conclusions.

\subsection{Notation}
\label{subsec:notation}


In this section we introduce notation that will be used throughout. The first $m$ integers are denoted by $[m] \eqdef \{1, 2, \dots, m \}$. Given a symmetric positive definite matrix $\mathbf{B},$ we equip the space $\RR^{n}$ with the Euclidean inner product defined by 
\begin{equation*}
   \langle x, y \rangle_{\mathbf{B}} \eqdef  \langle x, \mathbf{B} y \rangle = \sum_{i,j \in [n]} x_i \mathbf{B}_{ij} y_j , \quad x, y \in \RR^{n}
\end{equation*}
We also define the induced norm: $\|.\|_{\mathbf{B}}^2 \eqdef \langle \cdot, \cdot \rangle_{\mathbf{B}}$ and use the short-hand notation $\|.\|$ to mean $\|.\|_{\mathbf{I}}$ to denote the standard 2-norm. 
\noindent
Let $\mathbf{A} \in \RR^{m \times n}$ be a real matrix. By $\mathbf{Range}(\mathbf{A}), \|\mathbf{A} \|_F$ and $a_i^T$ we denote its range space, Frobenius norm and $i$th row respectively, and by $A^{\dag}$ we denote the (Moore-Penrose) pseudo inverse. The indicator function of a set $C$ is denoted by 
\[
\delta_{C}(x) \eqdef 
\begin{cases}
    0, & \text{if } x \in C\\
    +\infty, & \text{if } x \not \in C
  \end{cases} 
\]%
By $e_i$ we denote the $i$th column of the identity matrix $\mathbf{I}_n \in \RR^{n \times n}$.
For a random vector $x_i$ that depends on a random index $i \in [q]$ (where $i$ is chosen with probability $p_{i}$) we denote $\EE _{i \sim p} [x_i] \eqdef \sum_{i\in [q]} p_ix_i$ and we will just write $\mathbb E [x_i]$ when the probability distribution is clear from the context.
Let $\sigma_{i}(\mathbf{A})$ be the $i$th singular value of $A$ (when ordered decreasingly)  and $\sigma_{\min}(\mathbf{A})$ and $\sigma_{\max}(\mathbf{A})$ be the smallest and largest singular values of $\mathbf{A}$, respectively. They are given by
\begin{equation}
    \sigma_{\min}(\mathbf{A}) \eqdef \operatorname*{\min}_{x \in \RR^n,\ x\neq0} \, \frac{\|\mathbf{A} 
    x\|_2}{\|x\|_2} \quad \text{and} \quad \sigma_{\max}(\mathbf{A}) \eqdef \sigma_{1}(\mathbf{A}) \eqdef \operatorname*{\max}_{x \in \RR^n,\ x\neq0} \, \frac{\|\mathbf{A} 
    x\|_2}{\|x\|_2}
\end{equation}
Finally, a result we will need, if $\mathbf{A}$ is a symmetric positive semi-definite matrix the
largest singular value of $\mathbf{A}$ (the $L_2$ induced matrix norm or the spectral norm ) can be defined instead as
\begin{equation}
\label{eq:lsv}
        \|\mathbf{A}\|_2 \eqdef \sigma_{\max}(\mathbf{A}) = \max_{x \in \RR^n,\ x\neq0} \, \frac{|\langle \mathbf{A} x, x \rangle|}{\|x\|_2^2} = \max_{x \in \RR^n,\ x\neq0} \, \frac{\|\mathbf{A} x\|_2}{\|x\|_2}.
\end{equation}
Thus clearly
\[
  \frac{|\langle x, x \rangle_{\mathbf{ A}}|}{\|x\|_2^2} \leq \sigma_{\max}(\mathbf{A}).
\]
\noindent
Let $\tilde{\sigma}_{\min}(\mathbf{A}) \eqdef \min\{\sigma_{\min}(\mathbf{A}_J) \mid J\subseteq [n], \mathbf{A}_J\neq 0 \}$
where	$\mathbf{A}_J$ denotes the submatrix of $\mathbf{A}$ that is built up by the columns indexed by $J$ and $| x|_{\mathrm{min}} \eqdef \min\{|x_j| \mid x_j\neq 0\}$. 

\section{Basic notions}
\label{sec:basicnotions}
	
At first we recall some well known concepts and properties of convex functions and Bregman distances and later give upper-bounds of singular values of sum of matrices.
Let $f:\RR^n \to \RR$ be convex (note that we assume that $f$ is finite everywhere, hence also continuous). The \emph{subdifferential} of $f$ is defined by
\[
\partial f(x) \eqdef \set{x^* \in \RR^n}{ f(y) \ge f(x) + \scp{x^*}{y-x}\: \mbox{for all $y \in \RR^n$}}
\]
at any $x \in \RR^n$ is nonempty, compact and convex.

The function  $f:\RR^n \to \RR$ is said to be \emph{$\alpha$-strongly convex}, if for all $x,y \in \RR^n$ and subgradients $x^* \in \partial f(x)$ we have
\[
f(y) \ge f(x) + \scp{x^*}{y-x} + \tfrac{\alpha}{2} \cdot \norm[2]{y-x}^2 \,.
\]
If $f$ is $\alpha$-strongly convex then $f$ is coercive, i.e.
\[
\lim_{\norm[2]{x} \to \infty} f(x)=\infty \,,
\]
and its \emph{Fenchel conjugate} $f^{*}:\RR^n \to \RR$ given by 
\[
f^*(x^*)\eqdef \sup_{y \in \RR^n} \scp{x^*}{y} - f(y)
\]
is also convex, finite everywhere and coercive.

Additionally, $f^*$ is differentiable with a \emph{Lipschitz-continuous gradient} with constant $L_{f^*}=\frac{1}{\alpha}$, i.e. for all $x^*,y^* \in \RR^n$ we have
\[
\norm[2]{\nabla f^*(x^*)-\nabla f^*(y^*)} \le L_{f^*} \cdot \norm[2]{x^*-y^*} \,,
\]
which implies the estimate
\begin{equation} \label{eq:Lip}
f^*(y^*) \le f^*(x^*) -\scp{\nabla f^*(x^*)}{y^*-x^*} + \tfrac{L_{f^*}}{2} \cdot \norm[2]{x^*-y^*}^2 \,.
\end{equation}

\begin{example}
\label{exmp:f}
The objective function
\begin{equation} \label{eq:spf}
f(x) \eqdef \lambda \cdot \norm[1]{x} + \tfrac{1}{2} \cdot \norm[2]{x}^{2}
\end{equation}
is strongly convex with constant $\alpha=1$ and its conjugate function can be computed with the soft shrinkage operator as
\[
f^{*}(x^{*}) = \tfrac{1}{2} \cdot \norm[2]{S_{\lambda}(x^{*})}^{2} \quad \mbox{with} \quad \nabla f^{*}(x^{*}) = S_{\lambda}(x^{*}) \,.
\]
\end{example}

\begin{definition} \label{def:D}
The \emph{Bregman distance} $D_f^{x^*}(x,y)$ between $x,y \in \RR^n$ with respect to $f$ and a subgradient $x^* \in \partial f(x)$ is defined as
\[
D_f^{x^*}(x,y) \eqdef f(y)-f(x) -\scp{x^*}{y - x}\,.
\]
\end{definition}

Fenchel's equality states that $f(x) + f^*(x^*) = \scp{x}{x^*}$ if $x^*\in\partial f(x)$ and implies that the Bregman distance can be written as
\[
D_f^{x^*}(x,y) = f^*(x^*)-\scp{x^*}{y} + f(y)\,.
\]

\begin{example}[cf.~\cite{SL19}]  \label{exmp:D}
For $f(x)=\frac{1}{2} \cdot \norm[2]{x}^2$ we just have $\partial f(x) = \{x\}$ and  $D_f^{x^*}(x,y)=\frac{1}{2}\norm[2]{x-y}^2$.
For $f(x) = \lambda \cdot \norm[1]{x} + \tfrac{1}{2} \cdot \norm[2]{x}^{2}$ and any $x^*=x+\lambda \cdot s \in \partial f(x)$ we have
\[
D_f^{x^*}(x,y)=\frac{1}{2} \cdot \norm[2]{x-y}^2 + \lambda \cdot(\norm[1]{y}-\scp{s}{y}) \,.
\]
\end{example}

The following properties are crucial for the convergence analysis of the randomized algorithms.
They immediately follow from the definition of the Bregman distance and the assumption of strong convexity of $f$, cf.~\cite{LSW14}.
For all $x,y,z \in \RR^n$ and $x^* \in \partial f(x)$, $y^* \in \partial f(y)$, $z^* \in \partial f(z)$ we have
\begin{equation} \label{eq:D}
\frac{\alpha}{2} \norm[2]{x-y}^2 \le  D_f^{x^*}(x,y) \le \scp{x^*-y^*}{x-y} \le \norm[2]{x^*-y^*} \cdot \norm[2]{x-y}
\end{equation}
\begin{equation} \label{eq:E}
 D_f^{x^*}(x,y) +  D_f^{y^*}(y,z) -  D_f^{x^*}(x,z) = \scp{x^{*} - y^{*}}{z-y} 
\end{equation}

Note that if $f$ is differentiable with a Lipschitz-continuous gradient, then we also have the (better) upper estimate $D_f^{x^*}(x,y) \le L_f \cdot \norm[2]{x-y}^2$, but in general this need not be the case. 

The following Theorem, will be use in the convergence analysis more precisely in Lemma \eqref{lm:svt}.
\begin{theorem}[{\cite[Theorem 3.3.16(c)]{horn1994topics}}]
	\label{thm:svi}
	Let $\mathbf{A}, \mathbf{B} \in \RR^{m\times n}$ be given and let $p = \min\{m,n\}$. Then it holds for the decreasingly ordered singular values of $\mathbf{A},  \mathbf{B}, \mathbf{A+B}$ that
	\[
          |\sigma_{i}(\mathbf{A+B}) - \sigma_{i}(\mathbf{A}) | \leq \sigma_{1}(\mathbf{B}), \quad \text{for} \quad i \in [p].
        \]
	In particular we have
	\[
          \sigma_{i}(\mathbf{A}+\mathbf{B}) \geq \sigma_{i}(\mathbf{A}) - \sigma_{1}(\mathbf{B}), \quad \text{for} \quad i \in [p].
        \]
\end{theorem}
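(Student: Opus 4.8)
The plan is to derive the bound from the variational (Courant--Fischer type) characterization of singular values, namely that for any $\mathbf{M}\in\RR^{m\times n}$ and $i\in[p]$ one has
\[
\sigma_i(\mathbf{M}) \;=\; \max_{\substack{S\subseteq\RR^n\\ \dim S = i}}\ \ \min_{x\in S,\ \|x\|_2=1}\ \|\mathbf{M}x\|_2 .
\]
This identity I would either cite or recover in one line from the classical min-max theorem applied to the symmetric positive semidefinite matrix $\mathbf{M}^T\mathbf{M}$, together with $\sigma_i(\mathbf{M})^2=\lambda_i(\mathbf{M}^T\mathbf{M})$.

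First I would fix $i\in[p]$ and pick an $i$-dimensional subspace $S^\star$ attaining the maximum for $\mathbf{A}$, so that $\min_{x\in S^\star,\ \|x\|_2=1}\|\mathbf{A}x\|_2=\sigma_i(\mathbf{A})$. For every unit vector $x\in S^\star$ the triangle inequality and the bound $\|\mathbf{B}x\|_2\le\sigma_1(\mathbf{B})\|x\|_2$ give
\[
\|(\mathbf{A}+\mathbf{B})x\|_2 \ \ge\ \|\mathbf{A}x\|_2-\|\mathbf{B}x\|_2 \ \ge\ \|\mathbf{A}x\|_2-\sigma_1(\mathbf{B}).
\]
Taking the minimum over unit $x\in S^\star$ and then using $S^\star$ as a (generally suboptimal) competitor in the max-min formula for $\mathbf{A}+\mathbf{B}$ yields
\[
\sigma_i(\mathbf{A}+\mathbf{B}) \ \ge\ \min_{x\in S^\star,\ \|x\|_2=1}\|(\mathbf{A}+\mathbf{B})x\|_2 \ \ge\ \sigma_i(\mathbf{A})-\sigma_1(\mathbf{B}),
\]
which is exactly the ``in particular'' assertion.

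For the two-sided inequality I would apply the estimate just proved to the pair $\mathbf{A}+\mathbf{B}$ and $-\mathbf{B}$: since $(\mathbf{A}+\mathbf{B})+(-\mathbf{B})=\mathbf{A}$ and $\sigma_1(-\mathbf{B})=\sigma_1(\mathbf{B})$, this gives $\sigma_i(\mathbf{A})\ge\sigma_i(\mathbf{A}+\mathbf{B})-\sigma_1(\mathbf{B})$; combining the two directions proves $|\sigma_i(\mathbf{A}+\mathbf{B})-\sigma_i(\mathbf{A})|\le\sigma_1(\mathbf{B})$. The only genuinely nontrivial ingredient is the min-max characterization of singular values; everything afterwards is the triangle inequality plus a symmetry trick, so that characterization is the step I would treat most carefully. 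An alternative route that bypasses it is to pass to the Jordan--Wielandt matrix $\left(\begin{smallmatrix}0 & \mathbf{M}\\ \mathbf{M}^T & 0\end{smallmatrix}\right)$, whose nonzero eigenvalues are exactly $\pm\sigma_i(\mathbf{M})$, and to invoke Weyl's perturbation inequality for eigenvalues of Hermitian matrices.
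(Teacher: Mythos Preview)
Your argument is correct and is one of the standard proofs of this perturbation inequality. Note, however, that the paper does not actually prove Theorem~\ref{thm:svi}: it is stated with a citation to \cite[Theorem 3.3.16(c)]{horn1994topics} and used as a black box in Lemma~\ref{lm:svt}, with no proof given in the paper itself. So there is nothing to compare your approach against; you have simply supplied a valid proof where the authors chose to cite one.
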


\section{Interpretations}
\label{sec:interpretations}
We can view the randomized sparse Kaczmarz with averaging algorithm as an optimization method for solving a specific primal or dual optimization problem. More precisely, the RSKA algorithm is a particular case of the following.
\subsection{Randomized Block/Parallel Coordinate Descent}
Considering the regularized Basis Pursuit Problem as primal problem
\begin{equation}
\label{eq:bpp}
\min_{x \in \RR^n} \lambda \cdot \norm[1]{x} + \tfrac{1}{2} \cdot \norm[2]{x}^{2} \quad \mbox{s.t.}\quad \mathbf{A}x=b \,.
\end{equation}
\noindent
The dual of optimization problem (\ref{eq:bpp}) takes the form of a quadratic program:
\begin{equation}
\label{eq:dualbpp}
\min_{y \in \RR^m} \tfrac{1}{2} \cdot \norm[2]{S_{\lambda}(\mathbf{A}^Ty)}^{2} - \scp{b}{y} \,.
\end{equation}
where the primal variable $x$ and the dual variable $y$ are related through the relation
$x = S_{\lambda}(\mathbf{A}^T y)$. Let us define the primal and dual objective functions 
\begin{equation*}
    f(x) = \lambda \cdot \norm[1]{x} + \tfrac{1}{2} \cdot \norm[2]{x}^{2} + \delta_{\{0\}}(b-\mathbf{A}x)
\end{equation*}
and
\begin{equation*}
    g(y) = \tfrac{1}{2} \cdot \norm[2]{S_{\lambda}(\mathbf{A}^Ty)}^{2} - \scp{b}{y},
\end{equation*}
respectively. One iteration of the RSKA algorithm can be viewed as one step of the randomized block coordinate descent (RBCD) applied to the dual problem (\ref{eq:dualbpp}) when the weights $w_i$ are chosen in a particular form~\cite{P15}$^{}$. 
More formally  a negative gradient step in the random $i$-th component of $y$ having $\nabla_{i_k} g(y) = a_{i_k}^TS_{\lambda}(\mathbf{A}^T y) - b_{i_k}$ with step size $t_k = \tfrac{1}{\|a_{i_k}\|^2_2}$ yields
\begin{align*}
    y_{k+1} &= y_{k} - \tfrac{1}{\|a_{i_k}\|^2_2} \nabla_{i_k} g(y_k) e_{i_k}
\end{align*}
We easily recover (\ref{eq:rsk}) by simply multiplying this update with $\mathbf{A}    ^T$ and using the
relation between the primal and dual variables given by $x = S_{\lambda}(\mathbf{A}^T y)$. Consider the dual problem (\ref{eq:dualbpp}). If we choose the particular weights $w_{k,i} = \tfrac{\|a_i\|^2_2}{\sum_{i \in \tau_k}\|a_i\|^2_2}$, the block coordinate
descent method applied to $g$ from (\ref{eq:dualbpp}) reads
\begin{equation*}
\begin{aligned}
x_{k+1}^* &= x_k^* - \frac{1}{\eta\sum_{i \in \tau_k}\|a_i\|^2_2} \sum_{i \in \tau_k} (\scp{a_i}{x_k}-b_i) \cdot a_i \,,\\
x_{k+1} &= S_{\lambda}(x_{k+1}^*)
\end{aligned}
\end{equation*}
Parallel coordinate descent ~\cite{richtarik2016parallel} applied to the dual problem \eqref{eq:dualbpp} with learning rate $t_k = \frac{1}{\eta\|a_i\|^2_2}$ yields
\begin{equation}
\label{eq:pcdm}
y_{k+1} = y_k - \sum_{i \in \tau_k} \frac{1}{\eta\|a_i\|^2_2} \nabla_{i} g(y_k) e_{i}    
\end{equation}
In Update \eqref{eq:pcdm} only coordinate $i \in \tau_k$ are updated in $y_{k+1}$ and the remaining coordinate are unchanged. Multiplying this update with $\mathbf{A}^T$ and using the
relation between the primal and dual variables, we recover (\ref{eq:rsk}) with particular weights $w_i = 1$. However, for general weights $w_k$, the RSKA algorithm cannot be interpreted in these ways, and thus our scheme is more general.
It is important to note that in~\cite{P15} for the randomized block sparse Kaczmarz method of type $(\ref{eq:bsk})$ sublinear convergence rates have been obtained by identifying the iteration as a randomized block coordinate gradient descent method applied to the objective function $g$ of the unconstrained dual of $f$. However, the rates given in~\cite{P15} are in terms of the dual objective function $g$, and not of the primal iterates only, although, as mentioned there in the conclusions, the experimental results indicate that such rates also hold for the primal iterates.

\subsection{Stochastic Mirror Descent with Stochastic Polyak Stepsize}

The stochastic mirror descent (SMD) method and its variants ~\cite{beck2003mirror,lan2012validation,nemirovski2009robust} is one of the most
widely used family of algorithms in stochastic optimization  for non-smooth, Lipschitz continuous– convex and non-convex functions. Starting
with the orginal work of ~\cite{nemirovskij1983problem}, SMD has been studied in the context of
convex programming ~\cite{nemirovski:hal-00976649}, saddle-point problems ~\cite{mertikopoulos2018optimistic}, and monotone variational inequalities ~\cite{mertikopoulos2018stochastic}. Now we draw a connection of Algorithm~\ref{alg:RSKA} to the stochastic mirror descent method using stochastic Polyak stepsizes.
We consider a set of sketching matrices $\mathbf{S}_{i}\in \RR^{m\times s}$ ($i\in[m]$), define $\mathbf{Z}_{i} \eqdef  \mathbf{S}_{i}(\mathbf{S}_{i}^T \mathbf{A} \mathbf{A}^T \mathbf{S}_{i})^{\dagger}\mathbf{S}_{i}^T$ and consider the stochastic convex quadratic $h_{\mathbf{S}_{i}}$
\begin{equation}
  h_{\mathbf{S}_{i}}(x) \eqdef \frac{1}{2}\|\mathbf{A}x-b\|_{\mathbf{Z}_{i}}^2 = \frac{1}{2} (\mathbf{A}x-b)^T \mathbf{Z_{i}} (\mathbf{A}x-b),  
\end{equation}
(recall that $\mathbf{A} \in \RR^{m \times n}$, $b \in \RR^{m}$).

The general sketched mirror descent update with learning rate $t_k$ and a mirror map $f$ works as follows: For a given iterate $x_{k}$ draw a sketching matrix $\mathbf{S}_{i_{k}}$ (actually, one draws an index $i_{k}$) at random and update
\begin{equation}
\label{eq:smd}
    x_{k+1} = \operatorname*{arg\,min}_{x \in \RR^{n}} \, \Bigg \{\langle \nabla h_{\mathbf{S}_{i_{k}}}(x_k), x - x_k \rangle  + \frac{1}{t_k} D^{x_k^{*}}_{f}(x_k, x) \Bigg\},\quad x_k^{*} \in \partial f(x_k),
\end{equation}
which yields to the following update:
\begin{equation}
\begin{aligned}
\label{eq:smd1}
x_{k+1}^* &= x_k^* - t_k \nabla h_{\mathbf{S}_{i_{k}}}(x_k) \,,\\
x_{k+1} &= \nabla f^{*}(x_{k+1}^*).
\end{aligned}
\end{equation}
where $f^*$ denote the Fenchel conjugate of $f$.

One can show that 
\begin{align}
\label{eq:smdpd}
\footnotesize
 D^{x_{k+1}^{*}}_{f}(x_{k+1}, \hat x) \leq D^{x_k^{*}}_{f}(x_k, \hat x) -   t_k\langle \nabla h_{\mathbf{S}_{i_{k}}}(x_k), x_k - \hat x \rangle + \frac{t_k^2}{2}\|\nabla h_{\mathbf{S}_{i_{k}}}(x_k)\|^2
\end{align}
(actually, this also follows from Lemma \ref{lm:mp} below with $\varPhi(x) = \langle \nabla h_{\mathbf{S}_{i_{k}}}(x_k), x - x_k \rangle$, and a strongly convex function $f$).
If we select $t_k$ such that the RHS of inequality (\ref{eq:smdpd}) is minimized, we obtain 
\begin{equation}
\label{eq:mps}
    t_k = \frac{\langle \nabla h_{\mathbf{S}_{i_{k}}}(x_k), x_k - \hat x \rangle}{\|\nabla h_{\mathbf{S}_{i_{k}}}(x_k)\|^2}.
  \end{equation}
Since $\nabla h_{\mathbf{S}}(x) = \mathbf{A}^T \mathbf{Z}(\mathbf{A}x-b)$ we get
(cf.~\cite{richtarik2020stochastic}) 
\begin{equation}
\label{eq:psr}
   h_{\mathbf{S}_{i_{k}}}(x) - h_{\mathbf{S}_{i_{k}}}(\hat x) \overset{h_{\mathbf{S}_{i_{k}}}(\hat x) = 0}{=}  h_{\mathbf{S}_{i_{k}}}(x) = \tfrac{1}{2} \|\nabla h_{\mathbf{S}_{i_{k}}}(x)\|^2 = \tfrac12\langle \nabla h_{\mathbf{S}_{i_{k}}}(x_k), x - \hat x \rangle
 \end{equation}
 we get that the optimal step size is in fact simply
\begin{equation}
\label{eq:mps-polyak}
    t_k =   \frac{2\big[h_{\mathbf{S}_{i_{k}}}(x_k) - h_{\mathbf{S}_{i_{k}}}(\hat x)\big]}{\|\nabla h_{\mathbf{S}_{i_{k}}}(x_k)\|^2}.
\end{equation}
This quotient is known as \emph{stochastic mirror Polyak stepsize}~\cite{d2021stochastic}, (note that in this particular case, we always get $t_{k}=1$).
The randomized Kaczmarz (\ref{eq:rsk}) is
equivalent to one step of the stochastic mirror descent (\ref{eq:smd}) with the mirror Polyak stepsize $t_k$ given in (\ref{eq:mps}), whereas the mirror Polyak stepsize in it general form~\cite{d2021stochastic,loizou2021stochastic} is given by $t_k = \frac{h_{\mathbf{S}_{i_{k}}}(x_k) - h_{\mathbf{S}_{i_{k}}}(\hat x)}{c\|\nabla h_{\mathbf{S}}(x_k)\|^2_2}.$ The parameter $0 < c \in \RR $ in the step size is an important quantity which can be set theoretically based on the properties of the function under study. In~\cite{loizou2021stochastic} it is suggested that, for optimal convergence, one should select $c = 1/2$  for strongly convex functions and $c=0.2$ for non-convex functions. 
Morever, the general RSKA method (see Algorithm \ref{alg:RSKA}) falls into the general sketched-and-project framework in the context of mirror descent with $\mathbf{Z} \eqdef \frac{1}{\eta} \sum_{j\in \tau} w_j \mathbf{S}_{j}(\mathbf{S}_{j}^T \mathbf{A} \mathbf{A}^T \mathbf{S}_{j})^{\dagger}\mathbf{S}_{j}^T$ with $\mathbf{S}_{j} = e_{j}$, $t_{k}=1$ and $f(x) = \lambda \cdot \|x\|_{1} + \tfrac12 \cdot \|x\|_{2}^{2}$. In fact, the sketch-and-project form of updates (\ref{eq:rska}) (resp. \ref{eq:smd1}) is given by:
\begin{equation}
\begin{aligned}
\label{eq:rska1}
x_{k+1}^* &= x_k^* - \mathbf{A}^T \mathbf{Z} (\mathbf{A}x_k - b),\\
x_{k+1} &= \nabla f^*(x_{k+1}^*),
\end{aligned}
\end{equation}
with some random $\tau_{i}\subset [m]$ with cardinality $\eta$.

\section{Convergence analysis}
\label{sec:convergence}
In this section we show expected linear convergence for the randomized sparse Kaczmarz with averaging method. 
Before that we give the update satisfy by the iterate $x_k^*$ in Algorithm \eqref{alg:RSKA}. From line 5 of Algorithm \eqref{alg:RSKA}, it holds that:
\begin{align*}
    x_{k+1}^* &= x_k^*  - \frac{1}{\eta} \sum_{i \in \tau_k} w_i\tfrac{\scp{a_i}{x_k}-b_i}{\norm[2]{a_i}^2} \cdot a_i \\
    &= x^{*}_k - \tfrac{1}{\eta} \sum_{i \in \tau_k} w_i (e_{i}^T\mathbf{A})^T \cdot \tfrac{e_{i}^T(\mathbf{A} x_k - b)}{\|a_{i}\|^2_2}\\
    &= x^{*}_k - \mathbf{A}^T\tfrac{1}{\eta} \sum_{i \in \tau_k} w_i  \cdot \tfrac{e_{i} e_{i}^T}{\|a_{i}\|^2_2}(\mathbf{A} x_k - b)
\end{align*}
To simplify notation, we define the following matrices.
\begin{definition} 
\label{def:definition1}
Let $\mathbf{Diag}(d_1, d_2, \dots, d_m)$ denote the diagonal matrix with $d_1, d_2, \dots, d_m$ on the diagonal. We define the following matrices:
\begin{itemize}
    \item Weighted sampling matrix: 
    \begin{align*}
    \mathbf{M}_k = \tfrac{1}{\eta} \sum_{i \in \tau_k} w_i \tfrac{e_i e_i^T}{\|a_{i}\|^2_2}
    \end{align*}
    \item Normalization matrix:
    \begin{align*}
    \mathbf{D} = \mathbf{Diag} (\|a_{1}\|, \|a_{2}\|, \dots, \|a_{m}\|)
    \end{align*}
    so that the matrix $\mathbf{D}^{-1} \mathbf{A}$ has rows with unit norm.\\
    \item Probability matrix:
    \begin{align*}
    \mathbf{P}=\mathbf{Diag}(p_1,p_2,\dots,p_m)
    \end{align*}
    where $p_j = \PP (i=j)$.\\
    \item Weight matrix:
    \begin{align*}
    \mathbf{W} = \mathbf{Diag}(w_1,w_2,\dots,w_m)
    \end{align*}
    where $w_i$ represents the weight
corresponding to the $i$-th row.
\end{itemize}
\end{definition}
The following lemma and proof are taken from \cite[Lemma 1]{moorman2021randomized} and we include the full proof for completeness. The lemma gives the first and second moment of the random matrices $\mathbf{M}_k$, $\mathbf{A}^T\mathbf{M}_k$ respectively and will be use in our convergence analysis. 
\begin{lemma}
\label{lemma1}
Let $\mathbf{M}_k, \mathbf{P}, \mathbf{W}$ and $\mathbf{D}$ be defined as in Definition \ref{def:definition1}. Then
\begin{align*}
    \EE_k \left[ \mathbf{M}_k\right] &= \mathbf{P}\mathbf{W}\mathbf{D}^{-2}\quad\text{and}\\
   \EE_k \left[ (\mathbf{A}^T\mathbf{M}_k)^T \cdot (\mathbf{A}^T\mathbf{M}_k)\right] & = \tfrac{1}{\eta} \mathbf{PW^2D^{-2}} + (1-\tfrac{1}{\eta}) \mathbf{PWD^{-2}AA^TPWD^{-2}}.
\end{align*}
\end{lemma}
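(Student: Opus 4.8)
The plan is to compute each moment by first conditioning on the batch $\tau_k$, writing $\mathbf{M}_k = \tfrac{1}{\eta}\sum_{i\in\tau_k} w_i \tfrac{e_ie_i^T}{\|a_i\|^2}$, and then taking the expectation over the i.i.d.\ choice of the $\eta$ indices. For the first moment, linearity of expectation gives $\EE_k[\mathbf{M}_k] = \tfrac{1}{\eta}\sum_{j=1}^\eta \EE\big[w_{i_j}\tfrac{e_{i_j}e_{i_j}^T}{\|a_{i_j}\|^2}\big]$, and since each index $i_j$ is drawn with $\PP(i_j = \ell)=p_\ell$, each of the $\eta$ summands equals $\sum_{\ell=1}^m p_\ell w_\ell \tfrac{e_\ell e_\ell^T}{\|a_\ell\|^2} = \mathbf{P}\mathbf{W}\mathbf{D}^{-2}$ (the product of three diagonal matrices is the diagonal matrix with entries $p_\ell w_\ell/\|a_\ell\|^2$). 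Averaging $\eta$ identical terms yields the claimed formula $\EE_k[\mathbf{M}_k] = \mathbf{P}\mathbf{W}\mathbf{D}^{-2}$.

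For the second moment, I would write $\mathbf{A}^T\mathbf{M}_k = \tfrac{1}{\eta}\sum_{i\in\tau_k} w_i \tfrac{a_i e_i^T}{\|a_i\|^2}$, so that
\[
(\mathbf{A}^T\mathbf{M}_k)^T(\mathbf{A}^T\mathbf{M}_k) = \tfrac{1}{\eta^2}\sum_{i\in\tau_k}\sum_{i'\in\tau_k} w_i w_{i'}\tfrac{e_i a_i^T a_{i'} e_{i'}^T}{\|a_i\|^2\|a_{i'}\|^2}.
\]
Splitting the double sum into the diagonal part ($i=i'$, i.e.\ the same draw $i_j$) and the off-diagonal part ($i_j$ and $i_{j'}$ with $j\neq j'$, which are independent), I get $\eta$ diagonal terms and $\eta(\eta-1)$ off-diagonal terms. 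Each diagonal term has expectation $\sum_\ell p_\ell w_\ell^2 \tfrac{e_\ell e_\ell^T}{\|a_\ell\|^2} = \mathbf{P}\mathbf{W}^2\mathbf{D}^{-2}$ (using $a_\ell^Ta_\ell = \|a_\ell\|^2$); each off-diagonal term, by independence, has expectation $\big(\sum_\ell p_\ell w_\ell \tfrac{a_\ell e_\ell^T}{\|a_\ell\|^2}\big)^T\big(\sum_{\ell'} p_{\ell'} w_{\ell'}\tfrac{a_{\ell'}e_{\ell'}^T}{\|a_{\ell'}\|^2}\big)$, and recognizing $\sum_\ell p_\ell w_\ell\tfrac{a_\ell e_\ell^T}{\|a_\ell\|^2} = \mathbf{A}^T\mathbf{P}\mathbf{W}\mathbf{D}^{-2}$ this equals $\mathbf{D}^{-2}\mathbf{W}\mathbf{P}\mathbf{A}\mathbf{A}^T\mathbf{P}\mathbf{W}\mathbf{D}^{-2}$. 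Combining, $\EE_k[\cdot] = \tfrac{1}{\eta^2}\big(\eta\,\mathbf{P}\mathbf{W}^2\mathbf{D}^{-2} + \eta(\eta-1)\,\mathbf{P}\mathbf{W}\mathbf{D}^{-2}\mathbf{A}\mathbf{A}^T\mathbf{P}\mathbf{W}\mathbf{D}^{-2}\big) = \tfrac{1}{\eta}\mathbf{P}\mathbf{W}^2\mathbf{D}^{-2} + (1-\tfrac{1}{\eta})\mathbf{P}\mathbf{W}\mathbf{D}^{-2}\mathbf{A}\mathbf{A}^T\mathbf{P}\mathbf{W}\mathbf{D}^{-2}$, as claimed (all the diagonal factors commute freely).

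The only real subtlety — and the step I would be most careful about — is the bookkeeping in the double sum: one must treat $\tau_k$ as an ordered sample of $\eta$ independent draws rather than as a set, so that the "diagonal" contribution comes from the $\eta$ index-equal pairs $(j,j)$ and not from coincidental repeated values among distinct draws. With the with-replacement i.i.d.\ model this is exactly the split used in \cite[Lemma 1]{moorman2021randomized}, and the rest is routine manipulation of diagonal matrices together with the identities $\mathbf{D}^{-1}\mathbf{A}$ has unit-norm rows and $\sum_\ell p_\ell w_\ell a_\ell e_\ell^T/\|a_\ell\|^2 = \mathbf{A}^T\mathbf{P}\mathbf{W}\mathbf{D}^{-2}$.
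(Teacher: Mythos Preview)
Your proposal is correct and follows essentially the same approach as the paper's proof: both compute the first moment by linearity and identical distribution of the draws, and the second moment by expanding the double sum over the $\eta$ i.i.d.\ indices, separating the $\eta$ diagonal pairs from the $\eta(\eta-1)$ independent off-diagonal pairs, and identifying the resulting sums with products of the diagonal matrices $\mathbf{P},\mathbf{W},\mathbf{D}^{-2}$. Your explicit remark about treating $\tau_k$ as an ordered i.i.d.\ sample (so that ``diagonal'' means same draw index $j$, not coincidental equal values) is exactly the bookkeeping the paper invokes when it writes ``separating the cases where $i=j$ from those where $i\neq j$ and utilizing the independence of the indices sampled in $\tau_k$.''
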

\begin{proof}
Let $\mathbb{E}_{i}[\cdot]$ denote $\mathbb{E}_{i \sim p}[\cdot]$. From the definition of the weighted sampling matrix $\mathbf{M}_k$ as the weighted average of the i.i.d. sampling matrices $\tfrac{e_i e_i^T}{\|a_{i}\|^2_2},$ we see that
\begin{align*}
    \EE_k \left[ \mathbf{M}_k\right] = \EE_k \left[ \tfrac{1}{\eta} \sum_{i \in \tau_k} w_i \tfrac{e_i e_i^T}{\|a_{i}\|^2_2}\right] = \EE_i \left[  w_i \tfrac{e_i e_i^T}{\|a_{i}\|^2_2}\right] = \sum_{i=1}^{m} p_i w_i \tfrac{e_i e_i^T}{\|a_{i}\|^2_2} = \mathbf{P}\mathbf{W}\mathbf{D}^{-2}.
\end{align*}
In the same way, we have
\begin{align*}
  & \EE_k \left[ (\mathbf{A}^T\mathbf{M}_k)^T \cdot (\mathbf{A}^T\mathbf{M}_k)\right] \\
  &= \EE_k \left[ (\tfrac{1}{\eta} \sum_{i \in \tau_k} w_i \tfrac{e_i e_i^T}{\|a_{i}\|^2_2}) \mathbf{A}  \cdot \mathbf{A}^T \cdot (\tfrac{1}{\eta} \sum_{j \in \tau_k} w_j \tfrac{e_j e_j^T}{\|a_{j}\|^2_2})\right] \\
  &= \EE_k \left[ (\tfrac{1}{\eta} \sum_{i \in \tau_k} w_i \tfrac{e_i a_i^T}{\|a_{i}\|^2_2}) \cdot (\tfrac{1}{\eta} \sum_{j \in \tau_k} w_j \tfrac{a_j e_j^T}{\|a_{j}\|^2_2})\right] \\
  &= \tfrac{1}{\eta} \EE_i \left[ ( w_i \tfrac{e_i a_i^T}{\|a_{i}\|^2_2}) \cdot (w_i \tfrac{a_i e_i^T}{\|a_{i}\|^2_2})\right] + (1 - \tfrac{1}{\eta})\EE_i \left[  w_i \tfrac{e_i a_i^T}{\|a_{i}\|^2_2}\right] \EE_i \left[ w_i \tfrac{a_i e_i^T}{\|a_{i}\|^2_2} \right] \\
  &= \tfrac{1}{\eta} \EE_i \left[ w_i^2 \tfrac{e_i e_i^T}{\|a_{i}\|^2_2} \right] + (1 - \tfrac{1}{\eta})\EE_i \left[  w_i \tfrac{e_i e_i^T}{\|a_{i}\|^2_2}\right] \mathbf{A} \mathbf{A}^T \EE_i \left[ w_i \tfrac{e_i e_i^T}{\|a_{i}\|^2_2} \right] \\
  &= \tfrac{1}{\eta}\mathbf{P}\mathbf{W}^{2}\mathbf{D}^{-2} + (1 - \tfrac{1}{\eta})\mathbf{P}\mathbf{W}\mathbf{D}^{-2} \mathbf{A} \mathbf{A}^T \mathbf{P}\mathbf{W}\mathbf{D}^{-2},
\end{align*}
by separating the cases where $i = j$ from those where $i \neq j$ and utilizing the independence of the indices sampled in $\tau_k$ .
\end{proof}

We now present convergence results for the proposed method. We start our analysis by characterizing the error bound between two consecutive iterates and the error bound between the Bregman distance of the iterates, the solution and the residual in the following lemma.

\begin{lemma}
\label{lm:mp} 
Let $f, \varPhi: \RR^n \rightarrow \RR \cup \{+\infty\}$ be convex, where $\dom(f)=\RR^n$ and $\dom(\varPhi)\neq\emptyset$.
Let $ \mathcal{X} \subseteq \mathrm{dom}(\varPhi)$ be nonempty and convex, $x_k \in \RR^n$, $x_k^*\in\partial f(x_k)$. Assume that

\begin{equation*}
    x_{k+1} \in \operatorname*{arg\,min}_{x \in \mathcal{X}} \, \Bigg \{\varPhi(x)  + D^{x_k^{*}}_{f}(x_k, x)\Bigg\}.
\end{equation*}
Then there exist subgradient $x_{k+1}^{*}\in\partial f(x_{k+1})$ such that it holds
\begin{align*}
& \varPhi(y) + D^{x_k^{*}}_{f}(x_k, y)
 \geq \varPhi(x_{k+1}) + D^{x_k^{*}}_{f}(x_k, x_{k+1})  +  D^{x_{k+1}^{*}}_{f}(x_{k+1}, y)
\end{align*}
for any $y \in \mathcal{X}$.
\end{lemma}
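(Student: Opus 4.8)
This is the Bregman version of the classical three‑point (prox) inequality, and I would obtain it from the first‑order optimality condition of the defining minimization together with the cocycle identity \eqref{eq:E}. First I would absorb the Bregman term: since $x_k$ and $x_k^*$ are fixed, $x\mapsto D_f^{x_k^*}(x_k,x)=f(x)-\scp{x_k^*}{x}+c$ with $c$ a constant, so $x_{k+1}$ also minimizes over $\RR^n$ the convex function $\psi(x):=\varPhi(x)+\delta_{\mathcal{X}}(x)+f(x)-\scp{x_k^*}{x}$. Applying Fermat's rule gives $0\in\partial\psi(x_{k+1})$.

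Next I would invoke the sum rule for subdifferentials. Because $\dom(f)=\RR^n$, the function $f$ (and hence $f-\scp{x_k^*}{\cdot}$) is finite everywhere and therefore continuous, while $\dom(\varPhi+\delta_{\mathcal{X}})=\mathcal{X}\cap\dom(\varPhi)=\mathcal{X}\neq\emptyset$; thus the constraint qualification is met and $\partial\psi(x_{k+1})=\partial(\varPhi+\delta_{\mathcal{X}})(x_{k+1})+\partial f(x_{k+1})-x_k^*$. Consequently there exist $x_{k+1}^*\in\partial f(x_{k+1})$ (this is the claimed subgradient; note $\partial f(x_{k+1})\neq\emptyset$ anyway since $f$ is finite everywhere) and $u\in\partial(\varPhi+\delta_{\mathcal{X}})(x_{k+1})$ with $u=x_k^*-x_{k+1}^*$. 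Writing out the subgradient inequality for $u$ at an arbitrary $y\in\mathcal{X}$ (where $\delta_{\mathcal{X}}(y)=0$) yields
\[
\varPhi(y)\ \ge\ \varPhi(x_{k+1})+\scp{x_k^*-x_{k+1}^*}{y-x_{k+1}}.
\]

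To finish, I would rewrite the inner product using \eqref{eq:E} with the triple $(x,y,z)=(x_k,x_{k+1},y)$ and the subgradients $x_k^*,x_{k+1}^*$, which gives exactly $\scp{x_k^*-x_{k+1}^*}{y-x_{k+1}}=D_f^{x_k^*}(x_k,x_{k+1})+D_f^{x_{k+1}^*}(x_{k+1},y)-D_f^{x_k^*}(x_k,y)$. Substituting this into the displayed inequality and moving $D_f^{x_k^*}(x_k,y)$ to the left produces precisely
\[
\varPhi(y)+D^{x_k^{*}}_{f}(x_k,y)\ \ge\ \varPhi(x_{k+1})+D^{x_k^{*}}_{f}(x_k,x_{k+1})+D^{x_{k+1}^{*}}_{f}(x_{k+1},y),
\]
which is the assertion, valid for every $y\in\mathcal{X}$.

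The only genuinely non‑routine point — and the one I would be most careful about — is the additivity of subdifferentials in the presence of the indicator $\delta_{\mathcal{X}}$; this is exactly where the hypotheses $\dom(f)=\RR^n$ and $\mathcal{X}\neq\emptyset$ are used, since they guarantee $\dom(\varPhi+\delta_{\mathcal{X}})\cap\interior(\dom f)\neq\emptyset$. An alternative, more hands‑on route is to test optimality along the segments $x_{k+1}+t(y-x_{k+1})$ for $t\downarrow 0$ using convexity of $\varPhi$, $\delta_{\mathcal{X}}$ and $f$; but extracting a \emph{single} subgradient $x_{k+1}^*\in\partial f(x_{k+1})$ that works simultaneously for all $y\in\mathcal{X}$ still requires a compactness/separation argument on the compact convex set $\partial f(x_{k+1})$, so it carries the same essential content as the sum rule.
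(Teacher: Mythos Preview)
Your proof is correct and follows essentially the same route as the paper's: optimality condition for the minimizer, the subdifferential sum rule (justified by $\dom f=\RR^n$), extraction of $x_{k+1}^*\in\partial f(x_{k+1})$, and then the three-point identity~\eqref{eq:E}. The only cosmetic difference is that you encode the constraint $\mathcal{X}$ via the indicator $\delta_{\mathcal{X}}$ and apply Fermat's rule, whereas the paper keeps $\mathcal{X}$ explicit and uses the variational-inequality form of constrained optimality; both lead to the same subgradient inequality $\varPhi(y)\ge\varPhi(x_{k+1})+\scp{x_k^*-x_{k+1}^*}{y-x_{k+1}}$ for $y\in\mathcal{X}$.
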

\begin{proof}
Let denote by $J(x) = \varPhi(x)  + D^{x_k^{*}}_{f}(x_k, x)$. Since $J$ and $\mathcal X$ are convex and $x_{k+1}$
minimizes $J$ over $\mathcal{X}$, there exists a subgradient $d \in \partial J(x_{k+1})$ such that $$\langle d, y - x_{k+1} \rangle \geq 0 \,\,  \quad \forall \, y \in \mathcal{X}.$$
Since $f$ is finite everywhere, we have $\dom(D_{f}(\cdot,u))=\RR^n$ for all $u \in \RR^n$. Since $\dom(\varPhi)$ is nonempty and convex, $\varPhi$ has nonempty relative interior. So the subgradient sum rule applies and we obtain that
\[ \partial J(x_{k+1}) = \partial \varPhi(x_{k+1}) + (\partial f(x_{k+1}) - x_k^*). \]
Hence, there exist subgradients $g \in \partial \varPhi(x_{k+1})$, $x_{k+1}^{*}\in \partial f(x_{k+1})$ such that
\[\langle g + (x_{k+1}^{*} - x_{k}^{*}), \ y - x_{k+1} \rangle \geq 0 \,\,  \quad \forall \, y \in \mathcal{X}.\]
Therefore using the property of the subgradient and \eqref{eq:E}, we have for all $y \in \mathcal{X}$
\begin{align*}
\varPhi(y)  \geq& \ \varPhi(x_{k+1}) + \langle g, y-x_{k+1} \rangle \\
\geq& \ \varPhi(x_{k+1}) + \langle x_k^{*} - x_{k+1}^{*}, \ y-x_{k+1} \rangle \\
\overset{\text{\tiny \eqref{eq:E}}}{=}& \ \varPhi(x_{k+1}) + D^{x_k^{*}}_{f}(x_k, x_{k+1}) - D^{x_k^{*}}_{f}(x_k, y) + D^{x_{k+1}^{*}}_{f}(x_{k+1}, y).
\end{align*}
\end{proof}

The following lemma provides an error bound for the Bregman distance.
\begin{lemma}[\cite{SL19}]
\label{lm:Gamma_for_Bregman_Ax_bound}
Let $\tilde{\sigma}_{\min}(\mathbf{A})$ and $|\hat x|_{\mathrm{min}}$ be defined as in subsection \ref{subsec:notation}. Then for any $x \in  \RR^n$ with $\partial f(x) \cap \mathbf{Range}(\mathbf{A}^T) \neq 0$ and for all $\hat x = \mathbf{A}^T y \in \partial f(x) \cap \mathbf{Range}(\mathbf{A}^T)$, we have 
\begin{equation}
	\label{eqn:Bregman_Ax_bound}
	D_f^{x_k^*}(x_k,\hat x) \leq \gamma \cdot  \|\mathbf{A}x_k-b\|_2^2
\end{equation}
where 
\begin{equation}
	\label{eqn:Gamma}
	\gamma = \frac{1}{\tilde{\sigma}^2_{\text{min}}(\mathbf{A})} \frac{ |\hat x|_{\mathrm{min}} + 2\lambda }{|\hat x|_{\mathrm{min}}} 
\end{equation}
\end{lemma}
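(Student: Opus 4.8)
The plan is to move from the Bregman distance $D_f^{x_k^*}(x_k,\hat x)$ to the squared Euclidean distance $\|x_k-\hat x\|_2^2$ via the explicit form of $f$, and then from $\|x_k-\hat x\|_2^2$ to the squared residual $\|\mathbf{A}x_k-b\|_2^2$ via a restricted-injectivity estimate. I read the statement with $x_k,x_k^{*}$ the point and subgradient appearing in the displayed Bregman distance and with $\hat x$ the (unique) solution of the regularized basis pursuit problem~(\ref{eq:RBPP}); thus $\mathbf{A}\hat x=b$ and, by the KKT conditions for an affine-constrained problem, $\hat x$ admits a subgradient $\hat x^{*}=\mathbf{A}^{T}\hat y\in\partial f(\hat x)$, while for $x_k$ we take some $x_k^{*}=\mathbf{A}^{T}y_k\in\partial f(x_k)$. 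Writing $x_k^{*}=x_k+\lambda s_k$ with $s_k\in\partial\|x_k\|_1$, Fenchel duality also gives $x_k=S_\lambda(\mathbf{A}^{T}y_k)$ and $\hat x=S_\lambda(\mathbf{A}^{T}\hat y)$.

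For the first step I would apply the formula of Example~\ref{exmp:D}, namely
\[
  D_f^{x_k^*}(x_k,\hat x)=\tfrac12\|x_k-\hat x\|_2^2+\lambda\bigl(\|\hat x\|_1-\langle s_k,\hat x\rangle\bigr),
\]
and bound the nonnegative quantity $\|\hat x\|_1-\langle s_k,\hat x\rangle=\sum_{j\in\supp(\hat x)}\bigl(|\hat x_j|-(s_k)_j\hat x_j\bigr)$ termwise. For an index $j\in\supp(\hat x)$ with $(x_k)_j\neq0$ and $\sign((x_k)_j)=\sign(\hat x_j)$ the summand vanishes; for every other $j\in\supp(\hat x)$ one has $|(x_k-\hat x)_j|\ge|\hat x_j|\ge|\hat x|_{\min}$, hence $|\hat x_j|-(s_k)_j\hat x_j\le 2|\hat x_j|\le\tfrac{2}{|\hat x|_{\min}}|(x_k-\hat x)_j|^2$. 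Summing over $j$ gives $\|\hat x\|_1-\langle s_k,\hat x\rangle\le\tfrac{2}{|\hat x|_{\min}}\|x_k-\hat x\|_2^2$, so that $D_f^{x_k^*}(x_k,\hat x)\le\bigl(1+\tfrac{2\lambda}{|\hat x|_{\min}}\bigr)\|x_k-\hat x\|_2^2=\tfrac{|\hat x|_{\min}+2\lambda}{|\hat x|_{\min}}\|x_k-\hat x\|_2^2$. This step is a routine componentwise computation.

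For the second step, set $v=x_k-\hat x$ and $J=\supp(v)$; then $\mathbf{A}v=\mathbf{A}_Jv_J=\mathbf{A}x_k-b$ (using $\mathbf{A}\hat x=b$), whence $\|\mathbf{A}x_k-b\|_2=\|\mathbf{A}_Jv_J\|_2\ge\sigma_{\min}(\mathbf{A}_J)\|v_J\|_2=\sigma_{\min}(\mathbf{A}_J)\|x_k-\hat x\|_2$. Combining with the first step yields $D_f^{x_k^*}(x_k,\hat x)\le\tfrac{|\hat x|_{\min}+2\lambda}{|\hat x|_{\min}}\,\sigma_{\min}(\mathbf{A}_J)^{-2}\|\mathbf{A}x_k-b\|_2^2$, so the claim reduces to $\sigma_{\min}(\mathbf{A}_J)\ge\tilde\sigma_{\min}(\mathbf{A})$. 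This is the point I expect to be the main obstacle: one must show that $J=\supp(x_k-\hat x)$ indexes a column submatrix on which $\mathbf{A}$ is injective, so that $J$ is admissible in the minimum defining $\tilde\sigma_{\min}(\mathbf{A})$. For this I would use the soft-shrinkage structure $x_k=S_\lambda(\mathbf{A}^{T}y_k)$, $\hat x=S_\lambda(\mathbf{A}^{T}\hat y)$ together with uniqueness of $\hat x$: if $\mathbf{A}v=0$ then $x_k$ is feasible and carries a range-space subgradient, hence also solves~(\ref{eq:RBPP}) and equals $\hat x$ by strong convexity of $f$, so for $v\neq0$ we get $\mathbf{A}_J\neq0$, and an argument along these lines (as in~\cite{SL19}) identifies $J$ with an index set over which $\mathbf{A}$ acts injectively. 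Given this, $\sigma_{\min}(\mathbf{A}_J)\ge\tilde\sigma_{\min}(\mathbf{A})$ and the bound $D_f^{x_k^*}(x_k,\hat x)\le\gamma\|\mathbf{A}x_k-b\|_2^2$ with $\gamma$ as in~(\ref{eqn:Gamma}) follows.
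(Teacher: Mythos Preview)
The paper does not provide its own proof of this lemma; it is stated with a citation to~\cite{SL19} and then used as a black box in the convergence analysis. So there is no in-paper proof to compare against.

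Your two-step strategy is correct and is essentially how such an error bound is obtained. Step~1 is fine: on each ``bad'' index $j\in\supp(\hat x)$ (where $(x_k)_j=0$ or the signs disagree) one has $|(x_k-\hat x)_j|\ge|\hat x_j|\ge|\hat x|_{\min}$, whence $|\hat x_j|-(s_k)_j\hat x_j\le 2|\hat x_j|\le\tfrac{2}{|\hat x|_{\min}}|(x_k-\hat x)_j|^2$; summing gives $D_f^{x_k^*}(x_k,\hat x)\le\bigl(\tfrac12+\tfrac{2\lambda}{|\hat x|_{\min}}\bigr)\|x_k-\hat x\|_2^2\le\tfrac{|\hat x|_{\min}+2\lambda}{|\hat x|_{\min}}\|x_k-\hat x\|_2^2$.

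In Step~2 you make your life harder than necessary. With the paper's definition $\tilde\sigma_{\min}(\mathbf A)=\min\{\sigma_{\min}(\mathbf A_J)\mid J\subseteq[n],\ \mathbf A_J\neq0\}$, you only need $\mathbf A_J\neq0$ for $J=\supp(x_k-\hat x)$, not injectivity of $\mathbf A_J$. Your own contrapositive already delivers this: if $v=x_k-\hat x\neq0$ then $\mathbf Av\neq0$ (otherwise $x_k$ would be feasible for~(\ref{eq:RBPP}) and carry a subgradient $x_k^*\in\mathbf{Range}(\mathbf A^T)$, hence be optimal, hence equal $\hat x$ by strong convexity), so $\mathbf A_Jv_J\neq0$, so $\mathbf A_J\neq0$, and then $\sigma_{\min}(\mathbf A_J)\ge\tilde\sigma_{\min}(\mathbf A)$ directly from the definition. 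Whether $\tilde\sigma_{\min}(\mathbf A)>0$ (which would indeed require some restricted injectivity of $\mathbf A$) is a question about whether $\gamma$ is finite, not about the validity of~(\ref{eqn:Bregman_Ax_bound}); you do not need to argue injectivity to close the proof.
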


To effectively use Lemma \ref{lm:Gamma_for_Bregman_Ax_bound} we need the following assumption which characterize the coupling between the weight matrix and the probability matrix.

\begin{assumption}
\label{as:coupling}
The weight matrix $\mathbf{W}$ and the probability matrix $\mathbf{P}$ are linked by the following coupling
\[
\mathbf{P} \mathbf{W} \mathbf{D}^{-2} = \frac{\alpha}{\|\mathbf{A}\|_{F}^2} \mathbf{I} 
\]
some  scalar relaxation parameter $\alpha > 0$.
\end{assumption}
Assumption \eqref{as:coupling} has been used in ~\cite{moorman2021randomized} for the inconsistent case (i.e. $b - \mathbf{A} \hat x \neq 0$) and for $\lambda=0$. We include a motivation for completeness. As the batch size $\eta$ goes to $\infty$ (recall that we sample with replacement) we have 
\[
\lim_{\eta \to \infty}\mathbf{M}_k = \EE_{i \sim p} \bigg[w_i \tfrac{e_i e_i^T}{\|a_{i}\|^2_2}\bigg] = \mathbf{P} \mathbf{W} \mathbf{D}^{-2}
\]
Therefore the averaged RSK update of Eq. \eqref{eq:rska} approaches the deterministic update:
\begin{align*}
 x_{k+1} &= (\mathbf{I} - \mathbf{A}^T\mathbf{P} \mathbf{W} \mathbf{D}^{-2}\mathbf{A})x_k + \mathbf{A}^T\mathbf{P} \mathbf{W} \mathbf{D}^{-2}b   \\
 x_{k+1} - \hat x &= (\mathbf{I} - \mathbf{A}^T\mathbf{P} \mathbf{W} \mathbf{D}^{-2}\mathbf{A})(x_k - \hat x) + \mathbf{A}^T\mathbf{P} \mathbf{W} \mathbf{D}^{-2}(b - \mathbf{A} \hat x)
\end{align*}
In order to have that $(x_{k+1} - \hat x)$ goes to zero in the limit we
should require that this limiting error update has the zero vector as a fixed point, i.e.
\[
0 = \mathbf{A}^T\mathbf{P} \mathbf{W} \mathbf{D}^{-2}(b - \mathbf{A} \hat x)
\]
This is guaranteed if $\mathbf{P} \mathbf{W} \mathbf{D}^{-2} = \beta \mathbf{I}$. But for $\lambda \neq 0$, we do not have a bound of the form $D_f^{x_k^*}(x_k, \hat x) \leq \gamma \cdot \|Ax_k - b\|^2_{\mathbf{P} \mathbf{W} \mathbf{D}^{-2}}$ that is the reason why for that case we need to assume $\mathbf{P} \mathbf{W} \mathbf{D}^{-2} = \beta \mathbf{I}$.

Since in this case $\mathbf{P}\mathbf{W}^2\mathbf{D}^{-2} = \mathbf{P}\mathbf{W}\mathbf{D}^{-2}\mathbf{W}
$, Lemma~\ref{lemma1} becomes:
\begin{lemma}
\label{lemma2}
Let $\mathbf{M}_k, \mathbf{P}, \mathbf{W}$ and $\mathbf{D}$ be defined as in Definition \ref{def:definition1} and let Assumption~\ref{as:coupling} hold. Then
\begin{equation*}
    \EE_k \left[ \mathbf{M}_k\right] =  \frac{\alpha \mathbf{I}}{\|\mathbf{A}\|^2_F}
\end{equation*}
and 
\begin{equation*}
    \EE_k \left[ \mathbf{M_k^TAA^TM_k} \right] = \tfrac{1}{\eta}  \frac{\alpha \mathbf{W}}{\|\mathbf{A}\|^2_F} + \alpha^2 (1-\tfrac{1}{\eta})  \frac{\mathbf{AA^T}}{\|\mathbf{A}\|^4_F}
\end{equation*}
\end{lemma}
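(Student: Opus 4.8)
The plan is to obtain Lemma~\ref{lemma2} as a direct specialization of Lemma~\ref{lemma1} once the coupling of Assumption~\ref{as:coupling} is imposed, so no new estimate is needed — only substitution together with one elementary structural remark. That remark is: $\mathbf{P}$, $\mathbf{W}$ and $\mathbf{D}$ are all diagonal matrices, hence they pairwise commute, so that
\[
\mathbf{P}\mathbf{W}^{2}\mathbf{D}^{-2} \;=\; (\mathbf{P}\mathbf{W}\mathbf{D}^{-2})\,\mathbf{W} \;=\; \mathbf{W}\,(\mathbf{P}\mathbf{W}\mathbf{D}^{-2}).
\]
I would state this first, since it is the only algebraic fact beyond plain substitution that the proof uses.

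For the first moment, Lemma~\ref{lemma1} gives $\EE_k[\mathbf{M}_k] = \mathbf{P}\mathbf{W}\mathbf{D}^{-2}$, and Assumption~\ref{as:coupling} identifies the right-hand side with $\tfrac{\alpha}{\|\mathbf{A}\|_F^{2}}\mathbf{I}$, which is the first claimed identity. For the second moment, I would start from the expression in Lemma~\ref{lemma1}, noting that $\mathbf{M}_k=\mathbf{M}_k^{T}$ (it is a nonnegative combination of the symmetric matrices $e_ie_i^{T}/\|a_i\|_2^{2}$), so that $(\mathbf{A}^{T}\mathbf{M}_k)^{T}(\mathbf{A}^{T}\mathbf{M}_k) = \mathbf{M}_k^{T}\mathbf{A}\mathbf{A}^{T}\mathbf{M}_k$ and
\[
\EE_k\!\left[\mathbf{M}_k^{T}\mathbf{A}\mathbf{A}^{T}\mathbf{M}_k\right]
= \tfrac{1}{\eta}\,\mathbf{P}\mathbf{W}^{2}\mathbf{D}^{-2}
+ \bigl(1-\tfrac{1}{\eta}\bigr)\,\mathbf{P}\mathbf{W}\mathbf{D}^{-2}\,\mathbf{A}\mathbf{A}^{T}\,\mathbf{P}\mathbf{W}\mathbf{D}^{-2}.
\]
In the first summand I would use the factorization above to write $\mathbf{P}\mathbf{W}^{2}\mathbf{D}^{-2} = (\mathbf{P}\mathbf{W}\mathbf{D}^{-2})\mathbf{W} = \tfrac{\alpha}{\|\mathbf{A}\|_F^{2}}\mathbf{W}$; in the second summand I would substitute $\mathbf{P}\mathbf{W}\mathbf{D}^{-2} = \tfrac{\alpha}{\|\mathbf{A}\|_F^{2}}\mathbf{I}$ on both sides of $\mathbf{A}\mathbf{A}^{T}$, turning it into $\tfrac{\alpha^{2}}{\|\mathbf{A}\|_F^{4}}\mathbf{A}\mathbf{A}^{T}$. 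Collecting the two pieces yields exactly $\tfrac{1}{\eta}\tfrac{\alpha\mathbf{W}}{\|\mathbf{A}\|_F^{2}} + \alpha^{2}\bigl(1-\tfrac{1}{\eta}\bigr)\tfrac{\mathbf{A}\mathbf{A}^{T}}{\|\mathbf{A}\|_F^{4}}$, which is the second claimed identity.

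There is essentially no obstacle: the lemma is a bookkeeping corollary of Lemma~\ref{lemma1}, and the only point that deserves an explicit word is the legitimacy of the factorization $\mathbf{P}\mathbf{W}^{2}\mathbf{D}^{-2} = \mathbf{W}\cdot\mathbf{P}\mathbf{W}\mathbf{D}^{-2}$, which holds precisely because all three matrices are diagonal. I would therefore keep the proof to these few lines and refer back to Lemma~\ref{lemma1} for the two moment formulas being specialized.
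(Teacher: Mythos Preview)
Your proposal is correct and matches the paper's approach exactly: the paper does not give a separate proof of Lemma~\ref{lemma2} but simply remarks that $\mathbf{P}\mathbf{W}^{2}\mathbf{D}^{-2} = \mathbf{P}\mathbf{W}\mathbf{D}^{-2}\mathbf{W}$ (your diagonal-commutativity observation) and then states the lemma as the specialization of Lemma~\ref{lemma1} under Assumption~\ref{as:coupling}. Your write-up is, if anything, slightly more explicit than the paper's one-line justification.
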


\begin{lemma}
\label{lm:bd}
Under Assumption \ref{as:coupling}, for the iterates $x_k$ of Algorithm \ref{alg:RSKA}, it holds that:
\begin{align*}
	\mathbb E_k \left[ D_f^{x_{k+1}^*}(x_{k+1},\hat x) \right] \leq D_f^{x_k^*}(x_k,\hat x) - \frac{\alpha }{\|\mathbf{A}\|^2_F} (1 - \sigma_{\max}(\mathbf{T})) \|\mathbf{A}x_k - b\|^2_2 .
\end{align*}
with
\begin{equation*}
    \mathbf{T} = \tfrac{1}{2\eta}  \mathbf{W} + \tfrac{\alpha}{2} (1-\tfrac{1}{\eta})  \frac{\mathbf{A} \mathbf{A}^T}{\|\mathbf{A}\|^2_F}
\end{equation*}
\end{lemma}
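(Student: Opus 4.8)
The plan is to apply Lemma~\ref{lm:mp} to the RSKA update with the linear perturbation $\varPhi(x) = \langle \delta_k, x - x_k \rangle$ and $\mathcal{X} = \RR^n$, where $\delta_k = \mathbf{A}^T \mathbf{M}_k(\mathbf{A}x_k - b)$ is the increment from line~5 of Algorithm~\ref{alg:RSKA}. The minimizer over $\RR^n$ of $\varPhi(x) + D_f^{x_k^*}(x_k,x)$ satisfies the optimality condition $0 \in \delta_k + \partial f(x) - x_k^*$, i.e.\ $x_k^* - \delta_k \in \partial f(x)$, whose unique solution is $x = \nabla f^*(x_k^* - \delta_k) = S_\lambda(x_{k+1}^*) = x_{k+1}$ by Example~\ref{exmp:f}; moreover the subgradient produced by Lemma~\ref{lm:mp} is precisely $x_{k+1}^* = x_k^* - \delta_k$, matching the algorithm. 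Evaluating the conclusion of Lemma~\ref{lm:mp} at $y = \hat x$ and using $\varPhi(\hat x) - \varPhi(x_{k+1}) = \langle\delta_k,\hat x - x_k\rangle + \langle\delta_k, x_k - x_{k+1}\rangle$ gives
\[
D_f^{x_{k+1}^*}(x_{k+1},\hat x) \le D_f^{x_k^*}(x_k,\hat x) + \langle\delta_k,\hat x - x_k\rangle + \Big(\langle\delta_k, x_k - x_{k+1}\rangle - D_f^{x_k^*}(x_k,x_{k+1})\Big).
\]

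Next I would dispose of the bracketed term: since $f$ is $1$-strongly convex (Example~\ref{exmp:f}), the left inequality of \eqref{eq:D} gives $D_f^{x_k^*}(x_k,x_{k+1}) \ge \tfrac12\|x_k - x_{k+1}\|_2^2$, so by Young's inequality $\langle\delta_k, x_k - x_{k+1}\rangle - D_f^{x_k^*}(x_k,x_{k+1}) \le \tfrac12\|\delta_k\|_2^2$. For the linear term, since $\mathbf{A}\hat x = b$ and $\delta_k = \mathbf{A}^T\mathbf{M}_k(\mathbf{A}x_k - b)$,
\[
\langle\delta_k,\hat x - x_k\rangle = \langle \mathbf{M}_k(\mathbf{A}x_k-b),\ \mathbf{A}\hat x - \mathbf{A}x_k\rangle = -(\mathbf{A}x_k-b)^T\mathbf{M}_k(\mathbf{A}x_k-b),
\]
while $\tfrac12\|\delta_k\|_2^2 = \tfrac12 (\mathbf{A}x_k-b)^T \mathbf{M}_k^T\mathbf{A}\mathbf{A}^T\mathbf{M}_k (\mathbf{A}x_k-b)$. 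Substituting both yields a deterministic one-step bound on $D_f^{x_{k+1}^*}(x_{k+1},\hat x)$ involving two quadratic forms in $\mathbf{A}x_k - b$, with matrices $-\mathbf{M}_k$ and $\tfrac12\mathbf{M}_k^T\mathbf{A}\mathbf{A}^T\mathbf{M}_k$.

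Finally, I would take the conditional expectation $\mathbb{E}_k$ (over the batch $\tau_k$, which is independent of the history of $x_k$) and plug in Lemma~\ref{lemma2}: $\mathbb{E}_k[\mathbf{M}_k] = \tfrac{\alpha}{\|\mathbf{A}\|_F^2}\mathbf{I}$ turns the linear term into $-\tfrac{\alpha}{\|\mathbf{A}\|_F^2}\|\mathbf{A}x_k - b\|_2^2$, and $\mathbb{E}_k[\mathbf{M}_k^T\mathbf{A}\mathbf{A}^T\mathbf{M}_k] = \tfrac{1}{\eta}\tfrac{\alpha\mathbf{W}}{\|\mathbf{A}\|_F^2} + \alpha^2(1-\tfrac1\eta)\tfrac{\mathbf{A}\mathbf{A}^T}{\|\mathbf{A}\|_F^4} = \tfrac{2\alpha}{\|\mathbf{A}\|_F^2}\mathbf{T}$ turns the quadratic term into $\tfrac{\alpha}{\|\mathbf{A}\|_F^2}(\mathbf{A}x_k-b)^T\mathbf{T}(\mathbf{A}x_k-b)$. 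Since $\mathbf{T}$ is symmetric, \eqref{eq:lsv} bounds this last form by $\tfrac{\alpha}{\|\mathbf{A}\|_F^2}\sigma_{\max}(\mathbf{T})\|\mathbf{A}x_k-b\|_2^2$, and collecting the two contributions gives the claimed inequality. The manipulations in this last paragraph are routine (they are exactly Lemma~\ref{lemma2}); the only genuinely delicate point is the first step — recognizing the RSKA iteration as the proximal-type minimization governed by Lemma~\ref{lm:mp}, so that the auxiliary subgradient it returns is the algorithm's $x_{k+1}^*$, and in particular \emph{keeping} the term $D_f^{x_k^*}(x_k,x_{k+1})$ and combining it with $\langle\delta_k, x_k - x_{k+1}\rangle$ through strong convexity, since simply discarding $D_f^{x_k^*}(x_k,x_{k+1})$ would leave an uncontrolled $\langle\delta_k,x_k-x_{k+1}\rangle$ term and fail to close the estimate.
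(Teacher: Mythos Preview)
Your proposal is correct and follows essentially the same route as the paper: apply Lemma~\ref{lm:mp} with the linear $\varPhi$, use $1$-strong convexity of $f$ together with Young's inequality to absorb $\langle\delta_k,x_k-x_{k+1}\rangle - D_f^{x_k^*}(x_k,x_{k+1})$ into $\tfrac12\|\delta_k\|_2^2$, take conditional expectation via Lemma~\ref{lemma2}, and bound the resulting quadratic form by $\sigma_{\max}(\mathbf{T})$. The paper presents the Young step as Cauchy--Schwarz followed by $ab-\tfrac12 b^2\le\tfrac12 a^2$, which is the same inequality; your additional verification that the minimizer and subgradient from Lemma~\ref{lm:mp} coincide with the algorithm's $x_{k+1},x_{k+1}^*$ is a point the paper leaves implicit.
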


\begin{proof}
Using Lemma \ref{lm:mp} with $f(x) = \lambda \|x\|_1 + \frac{1}{2}\|x\|_2^{2}$ and $\varPhi(x) = \langle \mathbf{A}^T \mathbf{M}_k(\mathbf{A} x_k - b), x - x_k \rangle$, $y= \hat x$, it holds that:
\begin{align*}
 & D^{x_{k+1}^{*}}_{f}(x_{k+1}, \hat x) \\
 & \leq D^{x_k^{*}}_{f}(x_k, \hat x) +  \varPhi(\hat x) - \varPhi(x_{k+1}) - D^{x_k^{*}}_{f}(x_k, x_{k+1})  \\
 &= D^{x_k^{*}}_{f}(x_k, \hat x) -   \langle \mathbf{A}^T \mathbf{M}_k(\mathbf{A} x_k - b), x_k - \hat x \rangle + \langle \mathbf{A}^T \mathbf{M}_k(\mathbf{A} x_k - b),  x_k - x_{k+1} \rangle - D^{x_k^{*}}_{f}(x_k, x_{k+1})  \\
 & \leq D^{x_k^{*}}_{f}(x_k, \hat x) -   \langle  \mathbf{A}^T \mathbf{M}_k(\mathbf{A} x_k - b), x_k - \hat x \rangle + \| \mathbf{A}^T \mathbf{M}_k(\mathbf{A} x_k - b)\| \cdot \|x_k - x_{k+1}\| - \tfrac{1}{2}\|x_k - x_{k+1}\|^2 \\
 & \leq D^{x_k^{*}}_{f}(x_k, \hat x) -   \langle  \mathbf{A}^T \mathbf{M}_k(\mathbf{A} x_k - b), x_k - \hat x \rangle + \tfrac{1}{2}\| \mathbf{A}^T \mathbf{M}_k(\mathbf{A} x_k - b)\|^2
\end{align*}
We have:
\begin{align*}
	\EE_k \left[\Bigl \langle \mathbf{A}^T \mathbf{M}_k(\mathbf{A} x_k - b), x_k - \hat x \Bigr\rangle \right] &= \EE_k \left[\Bigl \langle \mathbf{M}_k \cdot (\mathbf{A} x_k - b) ,  \mathbf{A} x_k - b \Bigr\rangle \right]   \\
	&\overset{\text{\tiny Lemma~\ref{lemma1}}}{=} \Bigl\langle  \mathbf{A} x_k - b ,  \mathbf{A} x_k - b \Bigr\rangle_{\mathbf{P W D}^{-2}}  \overset{\text{\tiny Lemma~\ref{lemma2}}}{=} \frac{\alpha }{\|\mathbf{A}\|^2_F} \|\mathbf{A} x_k - b\|^2_2
\end{align*}	
and with $\mathbf{T} = \tfrac{1}{2\eta}  \mathbf{W} + \tfrac{\alpha}{2} (1-\tfrac{1}{\eta})  \frac{\mathbf{A} \mathbf{A}^T}{\|\mathbf{A}\|^2_F}$ we get 
\begin{align*}
\footnotesize
	\EE_k \left[ \Bigl\| \mathbf{A}^T \cdot \mathbf{M}_k \cdot (\mathbf{A} x_k - b) \Bigr\|_2^2  \right] &= \Bigl\langle  \mathbf{A} x_k - b ,  \EE_k \left[ \mathbf{M}_k^T \mathbf{A} \mathbf{A}^T \mathbf{M}_k \right]\cdot (\mathbf{A} x_k - b) \Bigr\rangle \\
	&\overset{\text{\tiny Lemma~\ref{lemma2}}}{=} \Bigl\langle   \mathbf{A} x_k - b ,  \Bigg(\tfrac{1}{\eta}  \frac{\alpha \mathbf{W}}{\|\mathbf{A}\|^2_F} + \alpha^2 (1-\tfrac{1}{\eta})  \frac{\mathbf{A} \mathbf{A}^T}{\|\mathbf{A}\|^4_F}\Bigg)\cdot (\mathbf{A} x_k - b) \Bigr\rangle\\
	&= \tfrac{2\alpha}{\|\mathbf{A}\|^2_F}\Bigl\langle  \mathbf{A} x_k - b ,   \mathbf{A} x_k - b \Bigr\rangle_{\mathbf{T}} \\
	&\leq \tfrac{2\alpha}{\|\mathbf{A}\|^2_F} \sigma_{\max}(\mathbf{T}) \|\mathbf{A} x_k - b\|^2_2.
\end{align*}
Thus combining everything together gives us	
\begin{align*}
	& \EE_k \left[ D_f^{x_{k+1}^*}(x_{k+1}, \hat x) \right] \\
	&\leq 
    D_f^{x_k^*}(x_k, \hat x) - \EE_k \left[\Bigl \langle \mathbf{A}^T \mathbf{M}_k(\mathbf{A} x_k - b), x_k - \hat x \Bigr\rangle \right] + \frac{1}{2} \EE_k \left[\Bigl\| \mathbf{A}^T \mathbf{M}_k (\mathbf{A} x_k - b) \Bigr\|_2^2 \right] \\
    &\leq  D_f^{x_k^*}(x_k, \hat x) - \frac{\alpha }{\|\mathbf{A}\|^2_F} \|\mathbf{A} x_k - b\|^2_2 + \tfrac{1}{2} \tfrac{2 \alpha}{\|\mathbf{A}\|^2_F} \sigma_{\max}(\mathbf{T}) \|\mathbf{A} x_k - b\|^2_2 \\
    &\leq   D_f^{x_k^*}(x_k, \hat x) - \frac{\alpha }{\|\mathbf{A}\|^2_F} (1 - \sigma_{\max}(\mathbf{T})) \|\mathbf{A} x_k - b\|^2_2.
\end{align*}
\end{proof}
The following lemma gives an upper and a lower bound for the largest singular value of  $\mathbf{T}$ which will be use in the convergence of RSKA iterates.
\begin{lemma}
\label{lm:svt}
Let 
\begin{equation*}
    \mathbf{T} = \tfrac{1}{2\eta}  \mathbf{W} + \tfrac{\alpha}{2} (1-\tfrac{1}{\eta})  \frac{\mathbf{A} \mathbf{A}^T}{\|\mathbf{A}\|^2_F}
\end{equation*}
Then the largest singular value of $\mathbf{T}$ satisfies:
\begin{align*}
  \tfrac{1}{2\eta} \bigg(\sigma_{\max}(\mathbf{W}) - \tfrac{\alpha}{\|\mathbf{A}\|^2_F} (\eta-1) \sigma_{\max}^2(\mathbf{A})\bigg) & \leq \sigma_{\max}(\mathbf{T})\\
  & \leq \tfrac{1}{2\eta} \bigg(\sigma_{\max}(\mathbf{W}) + \tfrac{\alpha}{\|\mathbf{A}\|^2_F} (\eta-1) \sigma_{\max}^2(\mathbf{A})\bigg)
\end{align*}
\noindent
In addition, If $\mathbf{W} = \alpha \mathbf{I}$, then $\mathbf{T}$ is positive semi-definite and 
\begin{equation*}
    \sigma_{\max}(\mathbf{T}) = \tfrac{1}{2\eta} \bigg(\alpha + \tfrac{\alpha}{\|\mathbf{A}\|^2_F} (\eta-1) \sigma_{\max}^2(\mathbf{A})\bigg)
\end{equation*}
\end{lemma}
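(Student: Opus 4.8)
The plan is to split $\mathbf{T}$ into its two $m\times m$ summands
\[
\mathbf{T}_1 = \tfrac{1}{2\eta}\,\mathbf{W}, \qquad \mathbf{T}_2 = \tfrac{\alpha}{2}\bigl(1-\tfrac1\eta\bigr)\,\frac{\mathbf{A}\mathbf{A}^T}{\|\mathbf{A}\|_F^2},
\]
and to apply the singular value perturbation bound of Theorem~\ref{thm:svi} with $i=1$ (here $p=\min\{m,m\}=m$, so $i=1\in[p]$). This immediately gives $|\sigma_{\max}(\mathbf{T})-\sigma_{\max}(\mathbf{T}_1)|\le\sigma_1(\mathbf{T}_2)=\sigma_{\max}(\mathbf{T}_2)$, i.e.
\[
\sigma_{\max}(\mathbf{T}_1)-\sigma_{\max}(\mathbf{T}_2)\ \le\ \sigma_{\max}(\mathbf{T})\ \le\ \sigma_{\max}(\mathbf{T}_1)+\sigma_{\max}(\mathbf{T}_2).
\]

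Next I would evaluate the two singular values appearing in this two-sided estimate. Since the scalar $\tfrac{1}{2\eta}$ is positive, it factors out of $\sigma_{\max}$, so $\sigma_{\max}(\mathbf{T}_1)=\tfrac{1}{2\eta}\sigma_{\max}(\mathbf{W})$. For $\mathbf{T}_2$, because $\eta\ge 1$ and $\alpha>0$ the scalar $\tfrac{\alpha}{2}(1-\tfrac1\eta)$ is nonnegative and again factors out, and $\mathbf{A}\mathbf{A}^T$ is symmetric positive semi-definite with $\sigma_{\max}(\mathbf{A}\mathbf{A}^T)=\sigma_{\max}^2(\mathbf{A})$; hence
\[
\sigma_{\max}(\mathbf{T}_2)=\tfrac{\alpha}{2}\bigl(1-\tfrac1\eta\bigr)\frac{\sigma_{\max}^2(\mathbf{A})}{\|\mathbf{A}\|_F^2}=\tfrac{1}{2\eta}\cdot\frac{\alpha(\eta-1)\sigma_{\max}^2(\mathbf{A})}{\|\mathbf{A}\|_F^2}.
\]
Substituting these two expressions into the displayed bound yields exactly the claimed lower and upper bounds on $\sigma_{\max}(\mathbf{T})$.

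For the final statement, if $\mathbf{W}=\alpha\mathbf{I}$ then $\mathbf{T}=\tfrac{\alpha}{2\eta}\mathbf{I}+\mathbf{T}_2$ is the sum of a positive multiple of the identity and the positive semi-definite matrix $\mathbf{T}_2$ (using once more $\eta\ge1$, $\alpha>0$), hence $\mathbf{T}$ is symmetric positive semi-definite and $\sigma_{\max}(\mathbf{T})$ equals its largest eigenvalue. Since adding $\tfrac{\alpha}{2\eta}\mathbf{I}$ shifts every eigenvalue of $\mathbf{T}_2$ by $\tfrac{\alpha}{2\eta}$, I get $\sigma_{\max}(\mathbf{T})=\tfrac{\alpha}{2\eta}+\sigma_{\max}(\mathbf{T}_2)=\tfrac{1}{2\eta}\bigl(\alpha+\tfrac{\alpha}{\|\mathbf{A}\|_F^2}(\eta-1)\sigma_{\max}^2(\mathbf{A})\bigr)$.

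All steps are elementary; Theorem~\ref{thm:svi} carries the analytic weight of the two-sided bound, and the only point requiring any care is checking that the scalar multiplying $\mathbf{A}\mathbf{A}^T$ is nonnegative so that it can be pulled out of $\sigma_{\max}$ (and so that positive semi-definiteness is preserved in the special case), which is guaranteed by the admissibility condition $\eta\ge1$ on the batch size. I do not anticipate a genuine obstacle beyond this bookkeeping.
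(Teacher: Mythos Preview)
Your proposal is correct and follows essentially the same route as the paper: the two-sided bound is obtained by applying Theorem~\ref{thm:svi} with $i=1$ to the splitting $\mathbf{T}=\mathbf{T}_1+\mathbf{T}_2$, and the special case $\mathbf{W}=\alpha\mathbf{I}$ is handled via the eigenvalue shift argument (the paper phrases it as ``if $\lambda$ is an eigenvalue of $\mathbf{A}\mathbf{A}^T$ then $\tfrac{\alpha}{2\eta}+\tfrac{\alpha}{2}(1-\tfrac1\eta)\tfrac{\lambda}{\|\mathbf{A}\|_F^2}$ is an eigenvalue of $\mathbf{T}$''). Your write-up is in fact more explicit than the paper's, which simply says the first part ``follows easily'' from Theorem~\ref{thm:svi}.
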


\begin{proof}
The first part of the proof follows easily from Theorem \ref{thm:svi}. If $\mathbf{W} = \alpha \mathbf{I}$, we have: If $\lambda$ is an eigenvalue of $\mathbf{A}\mathbf{A}^{T}$ then $\tfrac\alpha\eta + \tfrac\alpha2(1-\tfrac1\eta)\tfrac\lambda{\|\mathbf{A}\|_{F}^{2}}$ is an eigenvalue of $\mathbf{T}$. From this we deduce the last equality as well as that $\mathbf{T}$ is positive semi-definite.
\end{proof}

\subsection{General Convergence Result}
In this part, we present general convergence results for the iterates from RSKA method.
\begin{theorem}[Noiseless case]
\label{th:RSKA} Consider $\eta>1$, $\gamma$ as defined in equation \eqref{eqn:Gamma} (Lemma \eqref{lm:Gamma_for_Bregman_Ax_bound}), let Assumption~\ref{as:coupling} hold and assume that 
\begin{align}\label{eq:upper-bound-alpha}
0<\alpha < 2\frac{(\eta - \tfrac12 \sigma_{\max}(\mathbf{W}))\|\mathbf{A}\|_{F}^{2}}{\sigma_{\max}(\mathbf{A})^{2}(\eta-1)}.
\end{align}Then the random iterates $x_k$ produced by Algorithm \ref{alg:RSKA} converge in expectation  with a linear rate to the unique solution $\hat x$ of $\operatorname*{\min}_{\mathbf{A}x=b}\lambda \|x\|_1 + \frac{1}{2}\|x\|_2^{2}$, more precisely, with 
\begin{align}
\label{eq:cf_rska}
q = 1 - \frac{1}{\gamma } \cdot \frac{  L(\alpha)}{\|\mathbf{A}\|^2_F} \, \in (0,1),
\end{align}
and
\begin{align*}
    L(\alpha) = \alpha -  \tfrac{\alpha}{2\eta} \bigg( \tfrac{\alpha}{\|\mathbf{A}\|^2_F} (\eta-1) \sigma^2_{\max}(\mathbf{A}) + \sigma_{\max}(\mathbf{W})\bigg),
\end{align*}
it holds that
\begin{align*}
\EE \left[ D_f^{x_{k+1}^*}(x_{k+1},\hat x) \right] &\leq q \cdot \EE \left[ D_f^{x_k^*}(x_k,\hat x) \right] \, \\
\EE \bigg[\|x_{k}-\hat x\|_2^2 \bigg]   &\le 2 \cdot q^{k}  \cdot f(\hat{x}).
\end{align*}
\end{theorem}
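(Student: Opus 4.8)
The plan is to combine the one-step decrease from Lemma~\ref{lm:bd} with the error bound of Lemma~\ref{lm:Gamma_for_Bregman_Ax_bound} and the singular value estimate of Lemma~\ref{lm:svt}, and then iterate. First I would verify that under the hypothesis \eqref{eq:upper-bound-alpha} the contraction factor $q$ lies in $(0,1)$. From Lemma~\ref{lm:svt} we have the upper bound $\sigma_{\max}(\mathbf{T}) \le \tfrac{1}{2\eta}\big(\sigma_{\max}(\mathbf{W}) + \tfrac{\alpha}{\|\mathbf{A}\|_F^2}(\eta-1)\sigma_{\max}(\mathbf{A})^2\big)$, so that
\[
1 - \sigma_{\max}(\mathbf{T}) \ \ge\ 1 - \tfrac{1}{2\eta}\Big(\sigma_{\max}(\mathbf{W}) + \tfrac{\alpha}{\|\mathbf{A}\|_F^2}(\eta-1)\sigma_{\max}(\mathbf{A})^2\Big)\ =\ \tfrac{L(\alpha)}{\alpha}.
\]
The assumption \eqref{eq:upper-bound-alpha} is exactly the statement that this quantity is strictly positive, i.e.\ $L(\alpha)>0$, hence $1-\sigma_{\max}(\mathbf{T})>0$. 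Since also $\gamma>0$ and $L(\alpha)\le\alpha\le\|\mathbf{A}\|_F^2\cdot(\text{something})$, one checks $q<1$; to get $q>0$ I would note $D_f^{x_k^*}(x_k,\hat x)\ge 0$ forces the multiplicative factor in the recursion to be nonnegative after combining the two lemmas, but cleanly one argues $\tfrac1\gamma\cdot\tfrac{L(\alpha)}{\|\mathbf{A}\|_F^2}\le 1$ using $\gamma\ge 1/\tilde\sigma_{\min}^2(\mathbf{A})\ge 1/\sigma_{\max}(\mathbf{A})^2$ together with $L(\alpha)\le\alpha\le$ the bound in \eqref{eq:upper-bound-alpha}; this is a routine estimate.

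Next I would derive the Bregman recursion. Starting from Lemma~\ref{lm:bd},
\[
\mathbb{E}_k\big[D_f^{x_{k+1}^*}(x_{k+1},\hat x)\big]\ \le\ D_f^{x_k^*}(x_k,\hat x) - \tfrac{\alpha}{\|\mathbf{A}\|_F^2}\,(1-\sigma_{\max}(\mathbf{T}))\,\|\mathbf{A}x_k-b\|_2^2 .
\]
Here I use that $\hat x\in\partial f(\hat x)\cap\mathbf{Range}(\mathbf{A}^T)$ since $\hat x$ is the solution of the regularized basis pursuit problem (its optimality condition gives a subgradient in the range of $\mathbf{A}^T$), so Lemma~\ref{lm:Gamma_for_Bregman_Ax_bound} applies: $\|\mathbf{A}x_k-b\|_2^2 \ge \tfrac1\gamma D_f^{x_k^*}(x_k,\hat x)$. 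Substituting and using $1-\sigma_{\max}(\mathbf{T})\ge L(\alpha)/\alpha$,
\[
\mathbb{E}_k\big[D_f^{x_{k+1}^*}(x_{k+1},\hat x)\big]\ \le\ \Big(1 - \tfrac1\gamma\cdot\tfrac{L(\alpha)}{\|\mathbf{A}\|_F^2}\Big)\,D_f^{x_k^*}(x_k,\hat x)\ =\ q\, D_f^{x_k^*}(x_k,\hat x).
\]
Taking total expectation and using the tower property gives $\mathbb{E}[D_f^{x_{k+1}^*}(x_{k+1},\hat x)]\le q\,\mathbb{E}[D_f^{x_k^*}(x_k,\hat x)]$, which is the first claimed inequality. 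Iterating from $k=0$ yields $\mathbb{E}[D_f^{x_k^*}(x_k,\hat x)]\le q^k D_f^{x_0^*}(x_0,\hat x)$. For the second inequality, the lower bound in \eqref{eq:D} with $\alpha=1$ gives $\tfrac12\|x_k-\hat x\|_2^2\le D_f^{x_k^*}(x_k,\hat x)$, while at $k=0$ we have $x_0=x_0^*=0$, so $D_f^{0}(0,\hat x)=f(\hat x)-f(0)-\langle 0,\hat x\rangle = f(\hat x)$ (since $f(0)=0$). Combining, $\mathbb{E}[\|x_k-\hat x\|_2^2]\le 2\,\mathbb{E}[D_f^{x_k^*}(x_k,\hat x)]\le 2q^k f(\hat x)$.

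Uniqueness of $\hat x$ follows from strong convexity of the objective in \eqref{eq:bpp} (constant $\alpha=1$, Example~\ref{exmp:f}) on the feasible set, which is assumed nonempty. The main obstacle, such as it is, is the bookkeeping in the first paragraph: one must be careful that \eqref{eq:upper-bound-alpha} is equivalent to $L(\alpha)>0$ and that the combination with $\gamma$ keeps $q$ strictly below $1$ and nonnegative; everything else is a direct chaining of the already-established lemmas together with the initialization $x_0=0$.
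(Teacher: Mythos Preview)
Your proposal is correct and follows essentially the same route as the paper: apply Lemma~\ref{lm:bd}, substitute the error bound from Lemma~\ref{lm:Gamma_for_Bregman_Ax_bound}, use the upper bound on $\sigma_{\max}(\mathbf{T})$ from Lemma~\ref{lm:svt} to replace $\alpha(1-\sigma_{\max}(\mathbf{T}))$ by $L(\alpha)$, take total expectations, and finish with the strong-convexity bound \eqref{eq:D} together with $D_f^{0}(0,\hat x)=f(\hat x)$. You are in fact slightly more careful than the paper in justifying why Lemma~\ref{lm:Gamma_for_Bregman_Ax_bound} applies and in arguing that $q>0$ (the paper only checks $q<1$); the one minor slip is that the hypothesis of Lemma~\ref{lm:Gamma_for_Bregman_Ax_bound} concerns $x_k$ (one needs $x_k^*\in\partial f(x_k)\cap\mathbf{Range}(\mathbf{A}^T)$, which holds by construction of the iterates), not $\hat x$ as you wrote.
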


\begin{proof}
  Combining Lemma~\ref{lm:bd} with equation (\ref{eqn:Bregman_Ax_bound}) gives 
\begin{align*}
  \EE_k \left[ D_f^{x_{k+1}^*}(x_{k+1},\hat x) \right] &\leq D_f^{x_k^*}(x_k,\hat x)  - \tfrac{\alpha}{\|\mathbf{A}\|_{F}^{2}\gamma}(1-\sigma_{\max}(\mathbf{T}))D_f^{x_k^*}(x_k,\hat x)\\
  & \leq \left(1-\tfrac{\alpha}{\|\mathbf{A}\|_{F}^{2}\gamma}(1-\sigma_{\max}(\mathbf{T}))\right)D_f^{x_k^*}(x_k,\hat x).
\end{align*}
Using the rule of total expectation we get
\begin{align*}\EE \left[ D_f^{x_{k+1}^*}(x_{k+1},\hat x) \right] &\leq \left(1- \tfrac{1}{\gamma} \cdot \tfrac{\alpha \cdot (1-\sigma_{\max}(\mathbf{T}))}{\|\mathbf{A}\|_{F}^{2}}\right)\EE \left[ D_f^{x_k^*}(x_k,\hat x) \right].
\end{align*}
To get a rate $q\in (0,1)$, we need that $(1-\sigma_{\max}(\mathbf{T}))>0$, i.e. $\sigma_{\max}(\mathbf{T})<1$ which holds true from \eqref{eq:upper-bound-alpha}.
From Lemma \ref{lm:svt}, it follows that
$L(\alpha) \leq \alpha (1 - \sigma_{\max}(\mathbf{T}))$
and thus we  get $\EE \left[ D_f^{x_{k+1}^*}(x_{k+1},\hat x) \right] \leq q \cdot \EE \left[ D_f^{x_k^*}(x_k,\hat x) \right]$, with $q = 1 - \frac{1}{\gamma } \cdot \frac{  L(\alpha)}{\|\mathbf{A}\|^2_F}$.
 The inequality in terms of $\norm[2]{x_k - \hat x}^2$ is obtained by using the first inequality of equation (\ref{eq:D}) since $f$ is $1$-strongly convex.
\end{proof}

From~\eqref{eq:cf_rska} we see that we want to choose $\alpha$ such that $L(\alpha)$ is as large as possible:
\begin{cor}
\label{cor:linear_convergence}
Let Assumption \ref{as:coupling}  hold true. Then the relaxation parameter $\alpha$ and the constant $L$ which yields the fastest convergence rate guarantee in Theorem \ref{th:RSKA} are as follows:
\begin{enumerate}
    \item General Weights: If $\eta> \max(1,\sigma_{\max}(\mathbf{W})/2)$ then
        \begin{align*}
    \alpha^{*} = 
\frac{\|\mathbf{A}\|^2_F}{\sigma^2_{\max}(\mathbf{A})(\eta-1)}(\eta - \tfrac{\sigma_{\max}(\mathbf{W})}{2}),
\end{align*}
and
\begin{align*}
L(\alpha^{*}) =
\frac{\|\mathbf{A}\|^2_F}{8\sigma^2_{max}(\mathbf{A})} \cdot \frac{(2\eta - \sigma_{max}(\mathbf{W}))^2}{\eta(\eta-1)}.
\end{align*}
     \item Uniform Weights i.e $\mathbf{W} = \alpha \mathbf{I}$: 
     \begin{equation*}
   \alpha^{*} = \frac{\eta}{1 + (\eta-1)\tfrac{\sigma^2_{\max}(\mathbf{A})}{\|\mathbf{A}\|^2_F}}
\end{equation*}
and
\begin{equation*}
    L(\alpha^{*}) = \frac{\eta}{2 + 2(\eta-1)\tfrac{\sigma^2_{\max}(\mathbf{A})}{\|\mathbf{A}\|^2_F}}.
\end{equation*}
\end{enumerate}
\end{cor}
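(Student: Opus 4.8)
The plan is to recognise that, for fixed batch size $\eta$ and fixed matrix $\mathbf{A}$, the quantity $L(\alpha)$ is simply a one–variable concave quadratic in $\alpha$, so that its maximiser and maximal value can be obtained by completing the square. For the \emph{general weights} case, where $\sigma_{\max}(\mathbf{W})$ does not depend on $\alpha$, I would first rewrite
\[
L(\alpha) = \Bigl(1 - \tfrac{\sigma_{\max}(\mathbf{W})}{2\eta}\Bigr)\,\alpha \;-\; \frac{(\eta-1)\sigma_{\max}^2(\mathbf{A})}{2\eta\|\mathbf{A}\|_F^2}\,\alpha^2 .
\]
Since $\eta>1$ the leading coefficient is strictly negative, so $L$ is strictly concave and attains its unique maximum at the vertex $\alpha^\ast = b/(2c)$ with $b = 1 - \sigma_{\max}(\mathbf{W})/(2\eta)$ and $c = (\eta-1)\sigma_{\max}^2(\mathbf{A})/(2\eta\|\mathbf{A}\|_F^2)$; simplifying gives exactly the stated $\alpha^\ast$. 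Evaluating $L(\alpha^\ast) = b^2/(4c)$ and using $b^2 = (2\eta-\sigma_{\max}(\mathbf{W}))^2/(4\eta^2)$ then produces the claimed closed form for $L(\alpha^\ast)$.

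Next I would verify the two side conditions. Positivity of $\alpha^\ast$ is equivalent to $b>0$, i.e.\ $\eta > \sigma_{\max}(\mathbf{W})/2$, which together with $\eta>1$ is exactly the hypothesis $\eta > \max(1,\sigma_{\max}(\mathbf{W})/2)$. Moreover $\alpha^\ast$ equals precisely one half of the upper bound appearing in \eqref{eq:upper-bound-alpha}, so $\alpha^\ast$ lies strictly inside the admissible interval and Theorem~\ref{th:RSKA} may legitimately be invoked with this value; the resulting $q$ is the smallest rate guaranteed by that theorem because $q = 1 - L(\alpha)/(\gamma\|\mathbf{A}\|_F^2)$ is strictly decreasing in $L(\alpha)$.

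For the \emph{uniform weights} case $\mathbf{W}=\alpha\mathbf{I}$ one has $\sigma_{\max}(\mathbf{W})=\alpha$, so after substituting this coupling the same formula collapses to
\[
L(\alpha) = \alpha - \frac{\alpha^2}{2\eta}\Bigl(1 + (\eta-1)\tfrac{\sigma_{\max}^2(\mathbf{A})}{\|\mathbf{A}\|_F^2}\Bigr),
\]
again a concave quadratic, whose vertex and maximal value I would compute exactly as before with the single abbreviation $\rho \eqdef 1 + (\eta-1)\sigma_{\max}^2(\mathbf{A})/\|\mathbf{A}\|_F^2$, yielding $\alpha^\ast = \eta/\rho$ and $L(\alpha^\ast) = \eta/(2\rho)$, i.e.\ the stated formulas. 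Here I would also note, invoking Lemma~\ref{lm:svt}, that in this regime $\mathbf{T}$ is positive semidefinite and $\sigma_{\max}(\mathbf{T})$ is known exactly, so $L(\alpha) = \alpha\bigl(1-\sigma_{\max}(\mathbf{T})\bigr)$ holds with equality.

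The main obstacle is essentially bookkeeping rather than a conceptual difficulty: one must keep straight that in the general branch $\sigma_{\max}(\mathbf{W})$ is a fixed constant independent of the relaxation parameter, whereas in the uniform branch the constraint $\mathbf{W}=\alpha\mathbf{I}$ must be substituted \emph{before} optimising — optimising the unsubstituted expression would give an inconsistent answer. A secondary point requiring care is confirming the strict positivity and admissibility of $\alpha^\ast$ so that the conclusion of Theorem~\ref{th:RSKA} carries over unchanged.
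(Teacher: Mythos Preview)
Your proposal is correct and follows essentially the same route as the paper: in both cases one maximises the concave quadratic $L(\alpha)$ (treating $\sigma_{\max}(\mathbf{W})$ as a constant in~(a) and substituting $\mathbf{W}=\alpha\mathbf{I}$ before optimising in~(b)), checks that the resulting $\alpha^{*}$ is positive and lies in the admissible interval~\eqref{eq:upper-bound-alpha}, and reads off $L(\alpha^{*})$. Your observations that $\alpha^{*}$ is exactly half of the upper bound in~\eqref{eq:upper-bound-alpha} and that, in the uniform case, Lemma~\ref{lm:svt} gives $\sigma_{\max}(\mathbf{T})$ exactly (so the bound $L(\alpha)\le\alpha(1-\sigma_{\max}(\mathbf{T}))$ is an equality) are correct refinements the paper leaves implicit.
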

\begin{proof}
  In the case (a) of general weights, in order to get the tightest lower bound, we maximized the concave function $L(\alpha)$  and obtain
  \begin{align*}
  \alpha = \|\mathbf{A}\|_{F}^{2}(\eta - \sigma_{\max}(\mathbf{W})/2)/((\eta-1)\sigma_{\max}(\mathbf{A})^{2}
  \end{align*}
  which fulfills~\eqref{eq:upper-bound-alpha} and thus gives the best $q$ is Theorem~\ref{th:RSKA}. Since we need $\alpha\geq 0$ we have to assume $\eta  \geq \sigma_{\max}(\mathbf{W})/2$.
This gives $\alpha^{*}$ and plugging this into $L(\alpha)$ give us $L(\alpha^{*})$. In the case (b) of uniform weights, we maximized $L(\alpha)$ for all $\eta$ with $\mathbf{W} = \alpha \mathbf{I}$.
\end{proof}

A few remarks about the interpretation of the above theorem are in order:
\begin{remark}[Overrelaxation]
\label{rmk:rmk_lc}
\begin{itemize}
\item When a single thread $\eta = 1$ is used in the case (b) of uniform weight, we see that our optimal relaxation parameter
  is $\alpha^{*} = 1$. Whereas, when multiple threads $\eta > 1$ are used, we see
  \begin{equation*}
    1 < \alpha^{*} \leq \eta,
  \end{equation*}
  i.e. the method allows for large over-relaxation when the number of threads is high and we will see in Section~\ref{sec:effect-relaxation} that this does indeed lead to faster convergence.

  \item Our relaxation parameter $\alpha^{*}$ in the case of uniform weights is the same as the relaxation parameter $\alpha^{RT}$ suggested in~\cite{richtarik2020stochastic} although they do not treat the sparse case and only consider uniform weights.

  \item Finally, for $\eta=1$ in the case of general weights we have, due to the coupling in Assumption~\ref{as:coupling}, that the weights fulfill $w_{i} = \tfrac{\alpha\|a_{i}\|^{2}}{p_{i}\|\mathbf{A}\|_{F}^{2}}$. We could estimate $\sigma_{\max}(\mathbf{W}) \leq \tfrac{\alpha}{\min_{i}p_{i}}$ but this would be a quite crude estimate. By choosing the classical probabilities $p_{i} = \|a_{i}\|^{2}/\|\mathbf{A}\|_{F}^{2}$ we would get $\sigma_{\max}(\mathbf{W}) = \alpha$ and get that the relaxation parameter need to fulfill $\alpha\in (0,2)$ and that $\alpha^{*}=1$ is the optimal relaxation parameter. 
\end{itemize}
\end{remark}

\begin{remark}[Relation to standard randomized sparse Kaczmarz]
  In the case $\mathbf{W} = \mathbf{I}, \eta=1, \alpha = 1$, we have $\mathbf{T} = \tfrac{1}{2}\mathbf{I}$ and we recover the rate of the standard RSK.
  It holds
  \begin{align*}
    \EE_k \left[ D_f^{x_{k+1}^*}(x_{k+1},\hat x) \right] \leq D_f^{x_k^*}(x_k,\hat x) - \frac{1 }{2\|\mathbf{A}\|^2_F} \|\mathbf{A}x_k - b\|^2_2 
  \end{align*}
  which is obtained in \cite{SL19} and it is shown that this leads to a linear convergence rate in expectation,
  \begin{equation}
    \label{eq:cv_rsk}
    \EE \bigg[\|x_{k}-\hat x\|_2^2 \bigg]   \le 2 \cdot \bigg(1 - \frac{1 }{2\gamma\|\mathbf{A}\|^2_F}\bigg)^{k}  \cdot f(\hat{x}).
  \end{equation}
  which implies that we reach accuracy $\EE \bigg[\|x_{k}-\hat x\|_2^2 \bigg]   \le 2 \cdot \epsilon  \cdot f(\hat{x})$ in at most $k \geq 2\gamma \|A\|^2_F \log(\tfrac{1}{\epsilon})$ iterations.
\end{remark}
\begin{remark}[Convergence rate for the linearized Bregman method]
  Similarly to Lemma \ref{lm:bd} we can show that the linearized Bregman algorithm~\cite{cai2009linearized,yin2010analysis,LSW14}
  \begin{equation}
    \begin{aligned}
      x_{k+1}^* &= x_k^* - \frac{A^T(Ax_k - b)}{\|A\|^2_2},\\
      x_{k+1} &= S_{\lambda}(x_{k+1}^*)
    \end{aligned}
  \end{equation}
  has linear convergence rate given by
  \begin{equation}
    \label{eq:cv_lb}
    \big\|x_{k}-\hat x\big\|_2^2   \le 2 \cdot \bigg(1 - \frac{1 }{2\gamma\|\mathbf{A}\|^2_2}\bigg)^{k}  \cdot f(\hat{x}).
  \end{equation}
  Although we suspect that this result (\ref{eq:cv_lb}) is not new we could not find it in the literature.
\end{remark}

Inspired by~\cite{richtarik2020stochastic}, the following remarks apply to the uniform weight case. 
\begin{remark}[Mini-batch vs. full-batch]
  Let $H(\eta)= \frac{1}{L(\alpha^{*})} = \frac{2}{\eta} + 2(1-\frac{1}{\eta})\tfrac{\sigma^2_{\max}(\mathbf{A})}{\|\mathbf{A}\|^2_F} $ be the inverse of $L(\alpha^{*})$. Recall from Theorem~\ref{th:RSKA} that $L(\alpha^{*})$ influences the convergence rate: The larger $L(\alpha^{*})$, the faster the convergence.

  Since $\frac{\|\mathbf{A}\|^2_F}{\sigma^2_{\max}(\mathbf{A})} \geq 1$, $H$ is a nonincreasing function of $\eta$ and we have $H(1) = 2 $, $H(\infty) \eqdef \lim_{\eta \rightarrow \infty} H(\eta) =  \tfrac{2\sigma^2_{\max}(\mathbf{A})}{\|\mathbf{A}\|^2_F}$.  In the asymptotic regime $\eta \rightarrow \infty,$ Algorithm \ref{alg:RSKA} becomes linearized Bregman algorithm for minimizing (\ref{eq:bpp}), and $H(\infty)$ is the rate of linearized Bregman cf. (\ref{eq:cv_lb}). This shows that the averaging method interpolates between the basic method and the linearized Bregman. By increasing $\eta$, the quantity $\frac{H(1)}{H(\infty)} = \frac{\|\mathbf{A}\|^2_F}{\sigma^2_{\max}(\mathbf{A})}$ controls the maximum (guaranteed) speedup in
the iteration complexity achievable. Comparing (\ref{eq:cf_rska}) with the convergence rate (\ref{eq:cv_rsk}) of the basic sparse Kaczmarz method, we get an improvement of $2L(\alpha^{*}) >1$ which shows that for the RSKA algorithm, we can get a speed-up even of order approximately $\frac{\|\mathbf{A}\|^2_F}{\sigma^2_{\max}(\mathbf{A})}$ compared to the rate of the basic sparse Kaczmarz algorithm (the also the comparison in Table~\ref{tab:complexities}).

For $\eta \geq \frac{\|\mathbf{A}\|^2_F}{\sigma^2_{\max}(\mathbf{A})},$ we get $H(\eta) \leq 2 H(\infty)$ , which is the performance of the full batch (up to a factor of 2). This means that it does not make sense to use a minibatch size larger than $\frac{\|\mathbf{A}\|^2_F}{\sigma^2_{\max}(\mathbf{A})}$. Moreover, notice that $H(\eta) \geq \frac{1}{\eta} H(1)$ for all $\eta$, show that the number of iterations does not decrease linearly in the minibatch size $\eta$. From a total complexity perspective cf. Table~\ref{tab:complexities}, this also means that in a computational regime where processing $\eta$ basic method
updates costs $\eta$ times as much as processing a single update, the decrease in iteration complexity cannot compensate for the increase in cost per iteration, which means that the choice $\eta = 1$ is optimal. On the other hand, if a parallel processor is available, a larger $\eta$ will be better.

\end{remark}

\begin{table}[htb]
  \centering
  \begin{tabular}{lccc}\toprule
    & RSK & RSKA & linBreg\\\midrule
    iteration complexity & $\mathcal{O} \bigg(2 \gamma \|\mathbf{A}\|^2_F \log (\tfrac{1}{\epsilon}) \bigg)$ &\quad   \quad $\mathcal{O} \bigg( \gamma \frac{\|\mathbf{A}\|^2_F}{L(\alpha^{*})} \log (\tfrac{1}{\epsilon}) \bigg)$ & $\mathcal{O} \bigg( 2 \gamma \|\mathbf{A}\|^2_2 \log (\tfrac{1}{\epsilon}) \bigg)$\\
    cost per iteration &  $\mathcal{O} (n )$ \quad& \quad$\mathcal{O} ( \eta n  )$ & $\mathcal{O} ( m n )$\\\bottomrule
  \end{tabular}
  \caption{Complexity of different methods.}
  \label{tab:complexities}
\end{table}

\subsection{Noisy right hand sides}
\label{sec:noisy-rhs}

In the noisy case we consider a consistent linear system $\mathbf{A}x=b$, but assume that instead of $b$ we only have access to some vector $b^{\delta}$ with $\norm[2]{b-b^{\delta}}\leq\delta$.
In the context if Kaczmarz methods, such errors in the right hand side have been considered in~\cite{needell2010randomized,zouzias2013randomized}. Since the system $\mathbf{A}x=b^{\delta}$ is most likely inconsistent, RSKA will not solve the optimization problem~\eqref{eq:RBPP} but hopefully the iterates still come close to the solution. If we assume a bound $\norm[2]{b-b^{\delta}}\leq \delta$ for $b$ with $\mathbf{A}\hat x = b$ we get the following result on the convergence of the method:
\begin{theorem}[Noisy case]
\label{th:RSKA_soisy}
Assume that instead of exact data $b \in \mathbf{Range}(\mathbf{A})$ only a noisy right hand side $b^{\delta} \in \RR^m$ with $\| b^{\delta} - b \|_2 \leq \delta$ is given. Consider $\eta>1$, $\epsilon > 0$, $\gamma$ as defined in equation \eqref{eqn:Gamma} (Lemma \eqref{lm:Gamma_for_Bregman_Ax_bound}), let Assumption~\ref{as:coupling} hold and assume that \begin{align}\label{eq:upper-bound-alpha_noisy}
0<\alpha < 2\frac{((1-\epsilon)\eta - \tfrac12 \sigma_{\max}(\mathbf{W}))\|A\|_{F}^{2}}{\sigma_{\max}(\mathbf{A})^{2}(\eta-1)}.
\end{align}If the iterates $x_k$ of the RSKA method from Algorithm \ref{alg:RSKA} are computed with $b$ replaced by $b^{\delta}$, then, with the contraction factor $a$, we have :
\begin{align}
\label{eq:cf_rska_noisy}
a = 1 - \frac{\alpha}{\gamma } \cdot \frac{  (1-\epsilon-\sigma_{\max}(\mathbf{T}))}{\|\mathbf{A}\|^2_F} \, \in (0,1),
\end{align}
and the expected rate of convergence is
\begin{align*}
\EE \left[ D_f^{x_{k+1}^*}(x_{k+1},\hat x) \right] &\leq a \cdot \EE \left[ D_f^{x_k^*}(x_k,\hat x) \right] + c \delta^2 \,\\
\EE \bigg[\|x_{k}-\hat x\|_2^2 \bigg]   & \le 2 \cdot a^{k}  \cdot f(\hat{x}) + \frac{c}{1-a} \delta^2.
\end{align*}
where
    $c = \tfrac{\alpha}{\|A\|^2_F}\bigg(\sigma_{\max}(\mathbf{T}) + \tfrac{1}{\epsilon}\sigma_{\max}^2(\mathbf{T}^{'})\bigg), \quad \mathbf{T}^{'} = \mathbf{T} - \tfrac{1}{2}\mathbf{I}$
\end{theorem}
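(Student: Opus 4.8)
The plan is to follow the proof of Theorem~\ref{th:RSKA} essentially line by line, the only change being that the residual driving the iteration is now $\bar r_k := \mathbf{A}x_k-b^{\delta}$, whereas the quantity that Lemma~\ref{lm:Gamma_for_Bregman_Ax_bound} controls is the \emph{exact} residual $\hat r_k := \mathbf{A}x_k-b$. First I would re-derive the one-step estimate of Lemma~\ref{lm:bd}: applying Lemma~\ref{lm:mp} with $f(x)=\lambda\|x\|_1+\tfrac12\|x\|_2^{2}$, $\varPhi(x)=\langle \mathbf{A}^{T}\mathbf{M}_k\bar r_k,\,x-x_k\rangle$, $y=\hat x$ and $\mathcal X=\RR^{n}$ (so that $x_{k+1}=S_{\lambda}(x_k^{*}-\mathbf{A}^{T}\mathbf{M}_k\bar r_k)$ is exactly the minimizer), then using Cauchy--Schwarz and $D_f^{x_k^*}(x_k,x_{k+1})\ge\tfrac12\|x_k-x_{k+1}\|_2^{2}$ from \eqref{eq:D} and maximizing the resulting quadratic in $\|x_k-x_{k+1}\|_2$, one gets
\[
D_f^{x_{k+1}^{*}}(x_{k+1},\hat x)\le D_f^{x_k^{*}}(x_k,\hat x)-\langle \mathbf{A}^{T}\mathbf{M}_k\bar r_k,\,x_k-\hat x\rangle+\tfrac12\big\|\mathbf{A}^{T}\mathbf{M}_k\bar r_k\big\|_2^{2}.
\]
Taking the conditional expectation $\EE_k[\cdot]$, rewriting $\langle \mathbf{A}^{T}\mathbf{M}_k\bar r_k, x_k-\hat x\rangle=\langle\mathbf{M}_k\bar r_k,\hat r_k\rangle$ (using $\mathbf{A}\hat x=b$), and applying Lemma~\ref{lemma2}, the linear term becomes $\tfrac{\alpha}{\|\mathbf{A}\|_F^{2}}\langle\bar r_k,\hat r_k\rangle$ and the quadratic term becomes $\tfrac{\alpha}{\|\mathbf{A}\|_F^{2}}\langle\bar r_k,\mathbf{T}\bar r_k\rangle$, so that
\[
\EE_k\!\big[D_f^{x_{k+1}^{*}}(x_{k+1},\hat x)\big]\le D_f^{x_k^{*}}(x_k,\hat x)+\tfrac{\alpha}{\|\mathbf{A}\|_F^{2}}\big(\langle\bar r_k,\mathbf{T}\bar r_k\rangle-\langle\bar r_k,\hat r_k\rangle\big).
\]

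The heart of the argument, and the step I expect to be the main obstacle, is to bound the bracket by $-(1-\epsilon-\sigma_{\max}(\mathbf{T}))\|\hat r_k\|_2^{2}$ plus a multiple of $\delta^{2}$. I would set $e:=b^{\delta}-b$, so $\bar r_k=\hat r_k-e$ with $\|e\|_2\le\delta$, use $\mathbf{T}\succeq 0$ (since $\mathbf{W}$ and $\mathbf{A}\mathbf{A}^{T}$ are positive semidefinite) to get $\langle\bar r_k,\mathbf{T}\bar r_k\rangle\le\sigma_{\max}(\mathbf{T})\|\hat r_k-e\|_2^{2}$, and expand everything to obtain
\[
\langle\bar r_k,\mathbf{T}\bar r_k\rangle-\langle\bar r_k,\hat r_k\rangle\le(\sigma_{\max}(\mathbf{T})-1)\|\hat r_k\|_2^{2}+(1-2\sigma_{\max}(\mathbf{T}))\langle e,\hat r_k\rangle+\sigma_{\max}(\mathbf{T})\|e\|_2^{2}.
\]
The remaining cross term I would control by Young's inequality $ab\le\tfrac{s}{2}a^{2}+\tfrac1{2s}b^{2}$ with $s$ chosen so that the $\|\hat r_k\|_2^{2}$-coefficient equals $\epsilon$, together with the elementary fact $\big(\sigma_{\max}(\mathbf{T})-\tfrac12\big)^{2}\le\sigma_{\max}^{2}(\mathbf{T}')$ for $\mathbf{T}'=\mathbf{T}-\tfrac12\mathbf{I}$ (which holds because $\mathbf{T}'$ is symmetric and $\sigma_{\max}(\mathbf{T}')\ge|\sigma_{\max}(\mathbf{T})-\tfrac12|$); this yields $(1-2\sigma_{\max}(\mathbf{T}))\langle e,\hat r_k\rangle\le\epsilon\|\hat r_k\|_2^{2}+\tfrac1\epsilon\sigma_{\max}^{2}(\mathbf{T}')\|e\|_2^{2}$. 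Collecting the terms and using $\|e\|_2^{2}\le\delta^{2}$ gives
\[
\EE_k\!\big[D_f^{x_{k+1}^{*}}(x_{k+1},\hat x)\big]\le D_f^{x_k^{*}}(x_k,\hat x)-\tfrac{\alpha}{\|\mathbf{A}\|_F^{2}}\big(1-\epsilon-\sigma_{\max}(\mathbf{T})\big)\|\hat r_k\|_2^{2}+c\,\delta^{2},
\]
with $c=\tfrac{\alpha}{\|\mathbf{A}\|_F^{2}}\big(\sigma_{\max}(\mathbf{T})+\tfrac1\epsilon\sigma_{\max}^{2}(\mathbf{T}')\big)$ exactly as in the statement.

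Finally I would close the recursion. By Lemma~\ref{lm:svt}, hypothesis \eqref{eq:upper-bound-alpha_noisy} is precisely the condition $\sigma_{\max}(\mathbf{T})<1-\epsilon$, so the coefficient $1-\epsilon-\sigma_{\max}(\mathbf{T})$ is strictly positive. Because $x_0^{*}=0$ and every update adds a vector in $\mathbf{Range}(\mathbf{A}^{T})$, we have $x_k^{*}\in\partial f(x_k)\cap\mathbf{Range}(\mathbf{A}^{T})$, so Lemma~\ref{lm:Gamma_for_Bregman_Ax_bound} applies and gives $\|\hat r_k\|_2^{2}=\|\mathbf{A}x_k-b\|_2^{2}\ge\tfrac1\gamma D_f^{x_k^{*}}(x_k,\hat x)$. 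Substituting this and taking total expectation yields $\EE\big[D_f^{x_{k+1}^{*}}(x_{k+1},\hat x)\big]\le a\,\EE\big[D_f^{x_k^{*}}(x_k,\hat x)\big]+c\,\delta^{2}$ with $a$ as in \eqref{eq:cf_rska_noisy}, and $\sigma_{\max}(\mathbf{T})<1$ guarantees $a\in(0,1)$. Unrolling this geometric recursion and using $D_f^{x_0^{*}}(x_0,\hat x)=f(\hat x)$ (since $x_0=x_0^{*}=0$) gives $\EE\big[D_f^{x_k^{*}}(x_k,\hat x)\big]\le a^{k}f(\hat x)+\tfrac{c}{1-a}\delta^{2}$, and the stated bound on $\EE\|x_k-\hat x\|_2^{2}$ follows from the $1$-strong convexity of $f$ via the first inequality in \eqref{eq:D}. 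Beyond the bookkeeping, the single place where real care is needed is the splitting in the middle paragraph: one must absorb the noise cross term into an arbitrarily small $\epsilon$-fraction of $\|\hat r_k\|_2^{2}$ while paying only an $\mathcal{O}(\delta^{2}/\epsilon)$ price, and this is exactly what forces the appearance of the parameter $\epsilon$ and the matrix $\mathbf{T}'$ in the constants $a$ and $c$.
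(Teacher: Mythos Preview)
Your proposal is correct and follows essentially the same route as the paper. Two small presentational differences are worth noting: (i) the paper first applies Lemma~\ref{lm:mp} with $y=x_k^{\delta}:=\hat x+\mathbf{A}^{T}\mathbf{M}_k(b-b^{\delta})$ and then ``unfolds'' back to $\hat x$, which is algebraically equivalent to your direct choice $y=\hat x$; (ii) in the splitting step the paper keeps $\mathbf{T}$ as a matrix when combining the two cross terms, so that $\mathbf{T}'=\mathbf{T}-\tfrac12\mathbf{I}$ appears directly in $\langle \hat r_k,\mathbf{T}'(b-b^{\delta})\rangle$ before Young's inequality, whereas you first replace $\langle\bar r_k,\mathbf{T}\bar r_k\rangle$ by $\sigma_{\max}(\mathbf{T})\|\bar r_k\|_2^{2}$ and then need the auxiliary inequality $|\sigma_{\max}(\mathbf{T})-\tfrac12|\le\sigma_{\max}(\mathbf{T}')$ to match the stated constant. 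Both orderings yield the same $a$ and $c$.
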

\begin{proof}
Assuming that a noisy observed data $b^{\delta} \in \mathbb{R}^m$ instead of $b$ with  $\|b^{\delta} - b\|_2 \leq \delta$ is given, where $b = A \hat x$. The update in this case is given by:
\begin{equation}
\begin{aligned}
\label{eq:rska_with_noise}
    x^{*}_{k+1} &= x^{*}_k - \tfrac{1}{\eta} \sum_{i \in \tau_k} w_i \tfrac{\langle a_{i}, x_k \rangle - b_{i}^{\delta}}{\|a_{i}\|^2_2} \cdot a_{i},\\
    x_{k+1} &= S_{\lambda}(x^{*}_{k+1})
    \end{aligned}
\end{equation}
where $\eta = |\tau_k|$,which in terms of matrix multiplication is equal to:
\begin{equation}
\begin{aligned}
\label{eq:rska1_noisy}
    x^{*}_{k+1} &= x^{*}_k - \mathbf{A}^T \cdot \mathbf{M}_k \cdot (\mathbf{A} x_k - b^{\delta}),\\
    x_{k+1} &= S_{\lambda}(x^{*}_{k+1})
    \end{aligned}
\end{equation}
We introduce the abbreviation 
\begin{equation*}
    x_k^{\delta} \eqdef \hat x + \mathbf{A}^T \cdot \mathbf{M}_k \cdot (b - b^{\delta})
\end{equation*}
and use Lemma \ref{lm:mp} with $f(x) = \lambda \|x\|_1 + \frac{1}{2}\|x\|_2^{2}$ and $\varPhi(x) = \langle \mathbf{A}^T \mathbf{M}_k(\mathbf{A} x_k - b^{\delta}), x - x_k \rangle$, and $y=x_{k}^{\delta}$ and  get
\begin{align*}
  D^{x_{k+1}^{*}}_{f}(x_{k+1}, x_{k}^{\delta}) 
 & \leq D^{x_k^{*}}_{f}(x_k, x_{k}^{\delta}) +  \varPhi(x_{k}^{\delta}) - \varPhi(x_{k+1}) - D^{x_k^{*}}_{f}(x_k, x_{k+1})  \\
  &= D^{x_k^{*}}_{f}(x_k, x_{k}^{\delta}) -   \langle \mathbf{A}^T \mathbf{M}_k(\mathbf{A} x_k - b^{\delta}), x_k - x_{k}^{\delta} \rangle \\
  & \qquad+ \langle \mathbf{A}^T \mathbf{M}_k(\mathbf{A} x_k - b^{\delta}),  x_k - x_{k+1} \rangle - D^{x_k^{*}}_{f}(x_k, x_{k+1})  \\
  & \leq D^{x_k^{*}}_{f}(x_k, x_{k}^{\delta}) -   \langle \mathbf{A}^T \mathbf{M}_k(\mathbf{A} x_k - b^{\delta}), x_k - x_{k}^{\delta} \rangle \\
  & \qquad + \|\mathbf{A}^T \mathbf{M}_k(\mathbf{A} x_k - b^{\delta})\| \cdot \|x_k - x_{k+1}\| - \tfrac{1}{2}\|x_k - x_{k+1}\|^2 \\
 & \leq D^{x_k^{*}}_{f}(x_k, x_{k}^{\delta}) -   \langle \mathbf{A}^T \mathbf{M}_k(\mathbf{A} x_k - b^{\delta}), x_k - x_{k}^{\delta} \rangle + \tfrac{1}{2}\|\mathbf{A}^T \mathbf{M}_k(\mathbf{A} x_k - b^{\delta})\|^2
 \vspace{-0.8cm}
\end{align*}
so that
\begin{align}\label{eq:est-bregman-noisy-1}
	D_f^{x_{k+1}^*}(x_{k+1},x_k^{\delta}) &\leq 
    D_f^{x_k^*}(x_k,x_k^{\delta}) - \langle \mathbf{A}^T \mathbf{M}_k (\mathbf{A} x_k - b^{\delta}) , x_k - x_k^{\delta} \rangle + \frac{1}{2} \Bigl\| \mathbf{A}^T \mathbf{M}_k  (\mathbf{A} x_k - b^{\delta}) \Bigr\|_2^2 
    \vspace{-0.8cm}
\end{align}
Unfolding the expression of $ D_f^{x_{k+1}^*}(x_{k+1},x_k^{\delta})$ and $D_f^{x_{k}^*}(x_{k},x_k^{\delta}),$ we get:
\begin{align*}
\vspace{-0.8cm}
	D_f^{x_{k+1}^*}(x_{k+1},\hat x) &\leq  D_f^{x_k^*}(x_k,\hat x) - \langle \mathbf{A}^T \mathbf{M}_k (\mathbf{A} x_k - b^{\delta}) , x_k - \hat x \rangle + \frac{1}{2} \Bigl\| \mathbf{A}^T \mathbf{M}_k  (\mathbf{A} x_k  -  b^{\delta}) \Bigr\|_2^2
	\vspace{-0.8cm}
\end{align*}
\noindent
On the other hand,
\begin{align*}
	\mathbb E_k \left[ \langle \mathbf{M}_k \cdot (\mathbf{A} x_k - b^{\delta}) , \mathbf{A}x_k - b \rangle \right]
	&= \frac{\alpha }{\|\mathbf{A}\|^2_F} \|\mathbf{A}x_k - b\|^2_2 + \frac{\alpha }{\|\mathbf{A}\|^2_F} \langle b - b^{\delta} , \mathbf{A}x_k - b \rangle
	\vspace{-0.8cm}
\end{align*}
\noindent
and
\begin{align*}
	\mathbb E_k \left[ \Bigl\| \mathbf{A}^T \cdot \mathbf{M}_k \cdot (\mathbf{A} x_k - b^{\delta}) \Bigr\|_2^2  \right] &= \langle  \mathbf{A} x_k - b , \mathbb E_k \left[\mathbf{M}_k^T\mathbf{A}\mathbf{A}^T \mathbf{M}_k \right]\cdot (\mathbf{A} x_k - b) \rangle \\
	&+  \langle  b - b^{\delta} , \mathbb E_k \left[\mathbf{M}_k^T\mathbf{A}\mathbf{A}^T \mathbf{M}_k \right]\cdot (b - b^{\delta}) \rangle \\ &+ 2 \langle  \mathbf{A} x_k - b , \mathbb E_k \left[\mathbf{M}_k^T\mathbf{A}\mathbf{A}^T \mathbf{M}_k \right]\cdot (b - b^{\delta}) \rangle
\end{align*}
In summary we have:
\begin{align}\label{eq:est-bregma-noisy-2}
  \begin{split}
    & -\mathbb E_k \left[ \langle \mathbf{A}^T \cdot \mathbf{M}_k \cdot (\mathbf{A} x_k - b^{\delta}) , x_k - \hat x \rangle \right] + \frac{1}{2} \mathbb E_k \left[\Bigl\| \mathbf{A}^T \cdot \mathbf{M}_k \cdot (\mathbf{A} x_k  -  b^{\delta}) \Bigr\|_2^2 \right] \\
    &= -\frac{\alpha }{\|\mathbf{A}\|^2_F} \|\mathbf{A}x_k - b\|^2_2 - \frac{\alpha }{\|\mathbf{A}\|^2_F} \langle b - b^{\delta} , \mathbf{A}x_k - b \rangle + \tfrac{1}{2}\langle  b - b^{\delta} , \mathbb E_k \left[\mathbf{M}_k^T\mathbf{A}\mathbf{A}^T \mathbf{M}_k \right] (b - b^{\delta}) \rangle \\
    &+  \tfrac{1}{2}\langle  \mathbf{A} x_k - b , \mathbb E_k \left[\mathbf{M}_k^T\mathbf{A}\mathbf{A}^T \mathbf{M}_k \right] (\mathbf{A} x_k - b) \rangle +  \langle  \mathbf{A} x_k - b , \mathbb E_k \left[\mathbf{M}_k^T\mathbf{A}\mathbf{A}^T \mathbf{M}_k \right]\cdot (b - b^{\delta}) \rangle \\
    &\leq -\frac{\alpha }{\|\mathbf{A}\|^2_F} \|\mathbf{A}x_k - b\|^2_2 + \tfrac{\alpha}{\|\mathbf{A}\|^2_F} \sigma_{\max}(\mathbf{T}) \|\mathbf{A}x_k - b\|^2_2 + \tfrac{\alpha}{\|\mathbf{A}\|^2_F} \sigma_{\max}(\mathbf{T}) \|b - b^{\delta}\|^2_2 \\ & +  \tfrac{2\alpha}{\|\mathbf{A}\|^2_F}\langle  \mathbf{A} x_k - b , \Bigg(\tfrac{1}{2\eta}  W -\tfrac{1}{2}I + \tfrac{\alpha}{2} (1-\tfrac{1}{\eta})  \frac{AA^T}{\|\mathbf{A}\|^2_F}\Bigg)\cdot (b - b^{\delta}) \rangle \\
    &\leq -\frac{\alpha }{\|\mathbf{A}\|^2_F} \|\mathbf{A}x_k - b\|^2_2 + \tfrac{\alpha}{\|\mathbf{A}\|^2_F} \sigma_{\max}(\mathbf{T}) \|\mathbf{A}x_k - b\|^2_2 + \tfrac{\alpha}{\|\mathbf{A}\|^2_F} \sigma_{\max}(\mathbf{T}) \|b - b^{\delta}\|^2_2 \\ & +  \tfrac{\alpha \epsilon}{\|\mathbf{A}\|^2_F} \|\mathbf{A} x_k - b\|^2_2 +  \tfrac{\alpha}{\epsilon\|\mathbf{A}\|^2_F} \Bigg\| \Bigg(\tfrac{1}{2\eta}  W -\tfrac{1}{2}I + \tfrac{\alpha}{2} (1-\tfrac{1}{\eta})  \frac{\mathbf{A}\mathbf{A}^T}{\|\mathbf{A}\|^2_F}\Bigg)\cdot (b - b^{\delta}) \Bigg\|^2_2 \\
    &= - \frac{\alpha}{\|\mathbf{A}\|^2_F} (1 - \epsilon - \sigma_{\max}(\mathbf{T}))\|\mathbf{A} x_k - b\|^2_2 + \frac{\alpha}{\|\mathbf{A}\|^2_F} ( \sigma_{\max}(\mathbf{T}) + \frac{1}{\epsilon}\sigma_{\max}^2(\mathbf{T}^{'}))\|b - b^{\delta}\|^2_2.
  \end{split}
\end{align}
To get an improvement after each iteration, we need $(1 - \epsilon - \sigma_{\max}(\mathbf{T})) >0$, i.e. $\sigma_{\max}(\mathbf{T}) < 1 - \epsilon$ which hold true because of \eqref{eq:upper-bound-alpha_noisy}. Combining~\eqref{eq:est-bregman-noisy-1} and~\eqref{eq:est-bregma-noisy-2} and using the error bound from Lemma~\ref{lm:Gamma_for_Bregman_Ax_bound} we arrive at 
\begin{align*}
\mathbb E_k \left[ D_f^{x_{k+1}^*}(x_{k+1},\hat x) \right] &\leq 
                                   \left( 1 - \tfrac{\alpha}{\gamma\|\mathbf{A}\|_{F}^{2}}(1-\epsilon-\sigma_{\max}(\mathbf{T})) \right)D_f^{x_k^*}(x_k,\hat x)\\
  & \qquad + \tfrac{\alpha}{\|\mathbf{A}\|^{2}_{F}}(\sigma_{\max}(\mathbf{T}) + \tfrac1\epsilon \sigma^{2}_{\max}(\mathbf{T}'))\delta^2,
\end{align*}
which concludes the proof. The inequality in terms of the norm is obtained by using the first inequality of (\ref{eq:D}) since $f$ is $1$-strongly convex.
\end{proof}

\begin{remark}
  Note that for $\mathbf{W} = \mathbf{I}, \eta=1, \alpha = 1$, we have $\mathbf{T} = \tfrac{1}{2}\mathbf{I}$ meaning $\mathbf{T}^{'} = 0$ and sending $\epsilon$ to zero give us $\frac{c}{1-a} = \gamma$ which recover the rate of the standard RSK in the noisy case showed in~\cite{SL19}.
\end{remark}

\section{Numerical Experiments}
\label{sec:numerical_experiments}
We present several experiments to demonstrate the effectiveness of Algorithm \ref{alg:RSKA} under various conditions. In particular, we study the effects of the
relaxation parameter $\alpha$, the number of threads $\eta$, the sparsity parameter $\lambda$, the weight matrix $\mathbf{W}$, and the probability matrix $\mathbf{P}$. The simulations were performed in \texttt{Python} on an Intel Core i7 computer with 16GB RAM. We start by comparing several variants of RSKA algorithms with randomized Kaczmarz (RK) and randomized sparse Kaczmarz (RSK). We consider the following RSKA variants:
\begin{enumerate}
    \item v1: RSKA with $\mathbf{W}=\mathbf{I}$, i.e. $\alpha=1$, with a coupling such that $\mathbf{P}\mathbf{W}\mathbf{D}^{-2}= \frac{\alpha \mathbf{I}}{\|\mathbf{A}\|^2_F}$ i.e $\,\,$ $p_i = \frac{\|a_i\|^2_2}{\|\mathbf{A}\|^2_F}$.
    \item v2: RSKA with a uniform weight matrix $\mathbf{W} = \alpha^{*} \mathbf{I}$ i.e $p_i = \frac{\|a_i\|^2_2}{\|\mathbf{A}\|^2_F}$, where $\alpha^{*}$ is from Corollary~\ref{cor:linear_convergence}. 
    \item v3: RSKA with a general diagonal weight matrix $\mathbf{W}$ with diagonal entries $w_{i}$ sample i.i.d. from the uniform distribution on $[0,1]$ and $p_i = \frac{\|a_i\|^2_2}{\|\mathbf{A}\|^2_F}$.
    \item v4: RSKA with a general diagonal weight matrix $\mathbf{W}$ with diagonal entries $w_{i}$ sample i.i.d. from the uniform distribution on $[0,1]$ and set $p_{i}$ proportional to $\tfrac{\norm{a_{i}}^{2}}{w_{i}}$, i.e. we have $\mathbf{P}\mathbf{W}\mathbf{D}^{-2} = \alpha \mathbf{I}$ with $\alpha = \big(\sum_{j}\tfrac{\norm{a_{j}}^{2}}{w_{j}}\big)^{-1}$.
    \end{enumerate}
The rationale behind these choices are: Version v1 just chooses no weight and standard probabilities. In v2 we still choose standard probabilities but use a coupling of weights and probabilities with the uniform weight $\alpha^{*}$ of which we have seen in Corollary~\ref{cor:linear_convergence} that is has a certain optimality property. In v3 we still use standard probabilites but do not use a coupling of probabilties and weights. Since we do not have any indications on how to choose weights in any optimal way, we just used random weights here. In contrast to v3, we do use a coupling of weights and probabilities in v4, but again, since we do not have results on how to choose optimal weights, we choose random ones.
    
Note that RSK and RK are both special cases of RSKA, namely
\begin{enumerate}
    \item RSKA with $\mathbf{W}=\mathbf{I}$, i.e. $\alpha=1$, with a coupling such that $\mathbf{P}\mathbf{W}\mathbf{D}^{-2}= \frac{\alpha \mathbf{I}}{\|\mathbf{A}\|^2_F}$ i.e $\,\,$ $p_i = \frac{\|a_i\|^2_2}{\|\mathbf{A}\|^2_F}$ and $\eta=1$,
    \item RSKA with $\mathbf{W}=\mathbf{I}$, i.e. $\alpha=1$, with a coupling such that $\mathbf{P}\mathbf{W}\mathbf{D}^{-2}= \frac{\alpha \mathbf{I}}{\|\mathbf{A}\|^2_F}$ i.e $\,\,$ $p_i = \frac{\|a_i\|^2_2}{\|\mathbf{A}\|^2_F}$, $\eta=1$ and $\lambda=0$.
    \end{enumerate}

Synthetic data for the experiments is generated as follows:
All elements of the data matrix $\mathbf{A} \in \RR^{m\times n}$ are chosen independent and identically distributed from the standard normal distribution $\Ncal (0, 1)$. We constructed overdetermined, square, and underdetermined linear systems. To construct sparse solutions $\hat x \in \RR^n$ we choose $s$ indices from $\left\{ 1,\dots,n \right\}$ at random and placed zeros at these positions. and the corresponding right hand sides are $b = \mathbf{A}\hat x \in \RR^m$ while the respective noisy right hand sides are $b^{\delta}$ and are obtained by adding Gaussian noise (see Section~\ref{sec:noisy} below).
We generally chose $\eta = 1 + \frac{1}{10}\text{min}(m,n)$ for RSKA, unless something else is indicated. Note that in the overdetermined case with no noise there will be a unique solution $\hat x$ since with probability $1$ the matrices $\mathbf{A}$ have full rank, and so all methods are expected to converge to the same solution $\hat x$ in this case. 
    
For each experiment, we run independent trials each starting with the initial iterate $x_0=0$. We measure performance by plotting the relative residual error $\| \mathbf{A}x - b\|/\|b\|$ and the error $\|x_k - \hat x \|/\|\hat x\|$ against the number
of full iterations. Thick line shows mean over the total number of trials and shaded area which represent the standard deviation over the trials are plotted when appropriate. 

Figure \ref{fig:example} shows the result for a five times overdetermined and consistent system without noise where the value $\lambda = 1$ was used for RSK and RSKA. Note that the usual RK and RSKA variants performs consistently well over all trials, while the performance of RSK differs drastically between different instances. Moreover we observe experimentally that choosing the theoretically optimal overrelaxation parameter $\alpha^*$ from Corollary~\ref{cor:linear_convergence} for the RSKA v2 method give us faster convergence.

Figure \ref{fig:example1} and Figure \ref{fig:example2} shows the results respectively for a two times resp. five times underdetermined and consistent system without noise where the values $\lambda = 1$, resp. $\lambda = 3$ was used for RSK and RSKA. Methods like RSK and RSKA take advantage of the fact that the vectors $\hat x$ are very sparse. Moreover, in Figure \ref{fig:example1}, the RSK and RK methods do not reduce the residual as fast as the RSKA method. However, since the problem is underdetermined, the RK method does not converge to a sparse solution and hence, the error does not converge to zero. Figure \ref{fig:example3} and Figure \ref{fig:example4} show results for further values of $m$ and $n$ and show similar behavior as Figure \ref{fig:example1}.


\begin{figure}[htb]%
    \centering
    \subfloat[\centering Relative Residual]{{\includegraphics[width=.45\linewidth]{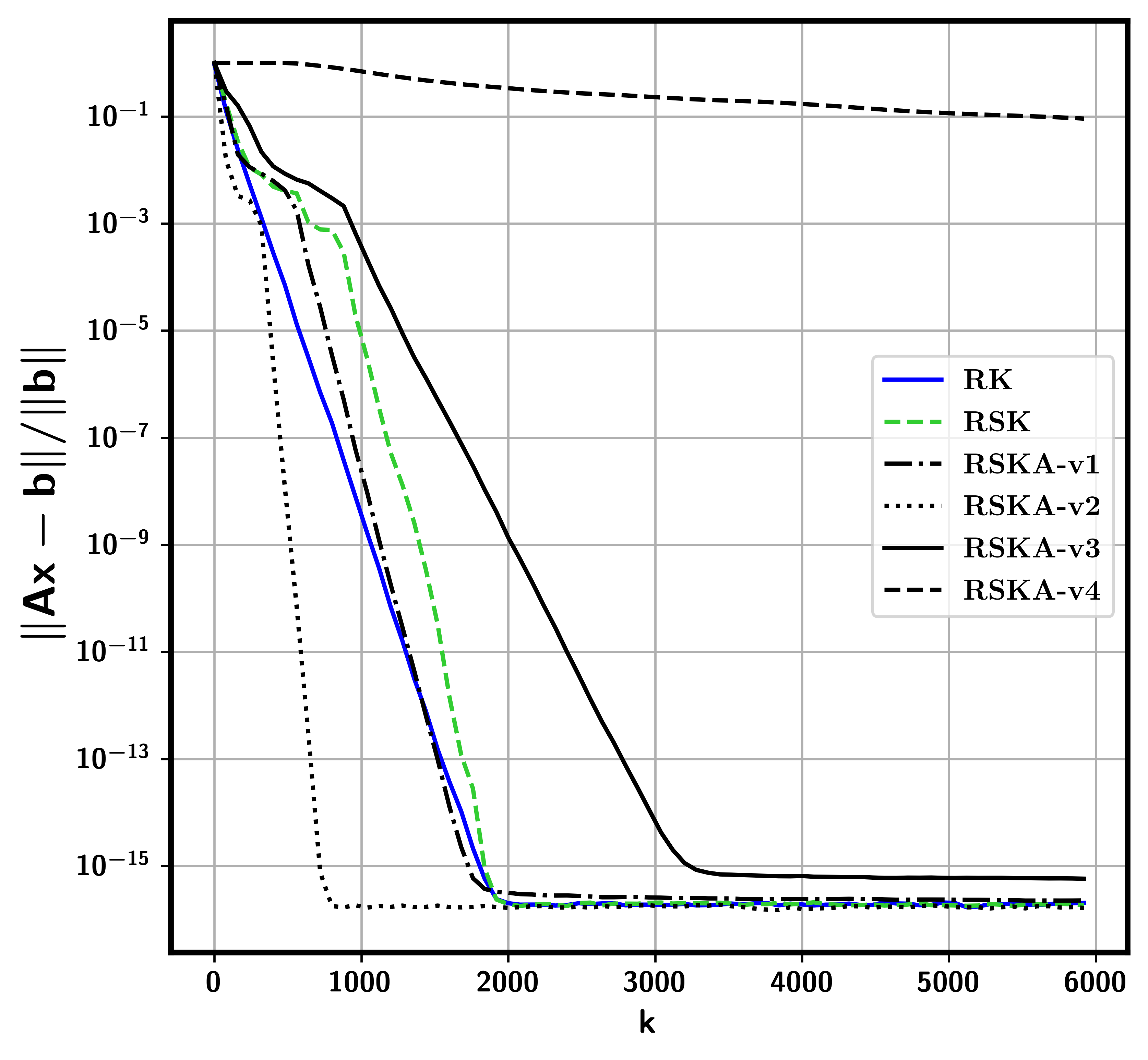} }}%
    \qquad
    \subfloat[\centering Error]{{\includegraphics[width=.45\linewidth]{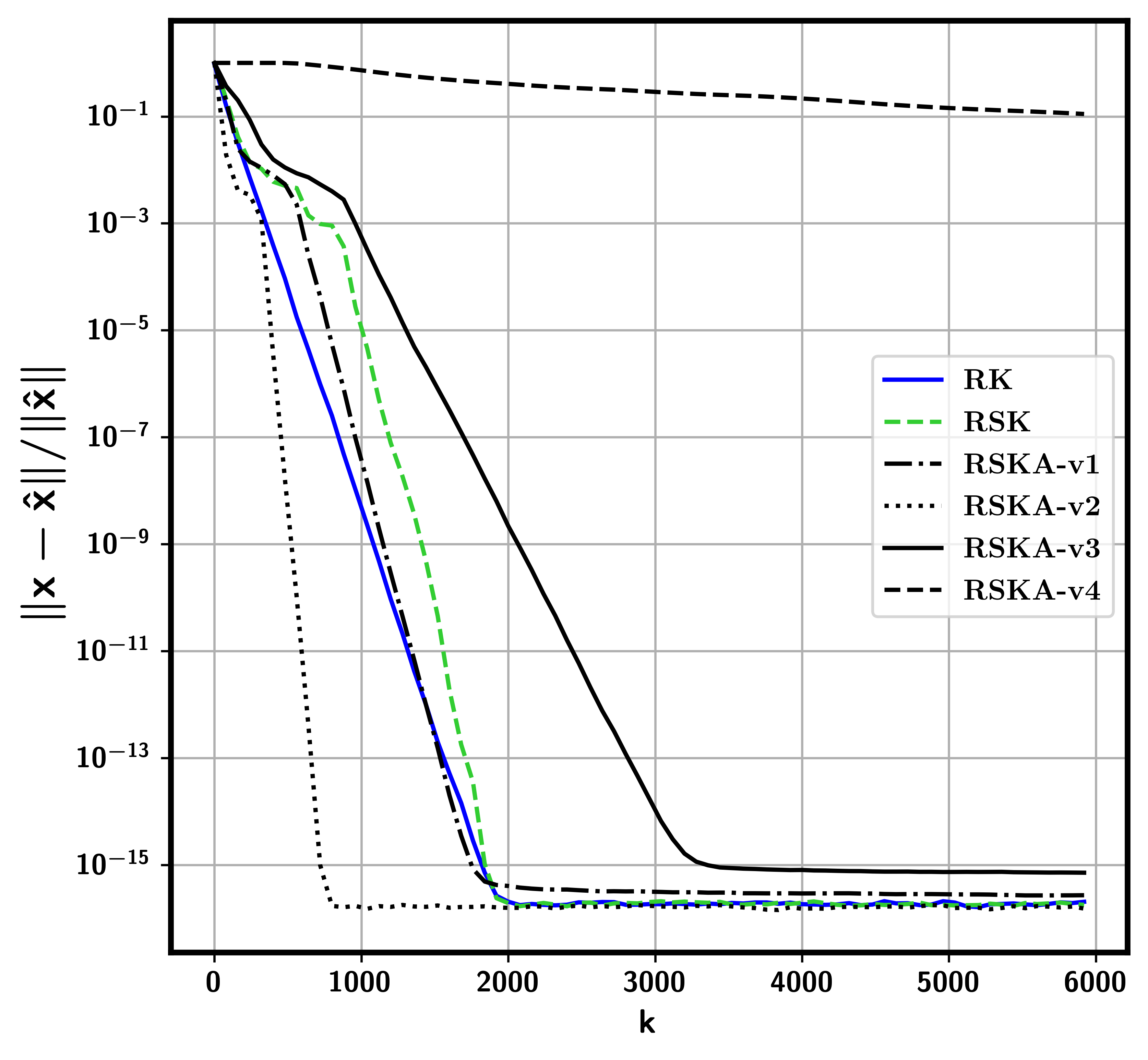} }}%
    \caption{\small A comparison of randomized Kaczmarz (blue), randomized sparse Kaczmarz (green) and RSKA method (black), $m = 100, n = 20,$ sparsity $s=10$, $\eta=11$, $\lambda=1$, no noise and 20 runs. Thick line shows mean over all trials.}%
    \label{fig:example}%
    \vspace{-0.8cm}
\end{figure}

\begin{figure}[htb]%
    \centering
    \subfloat[\centering Relative Residual]{{\includegraphics[width=.45\linewidth]{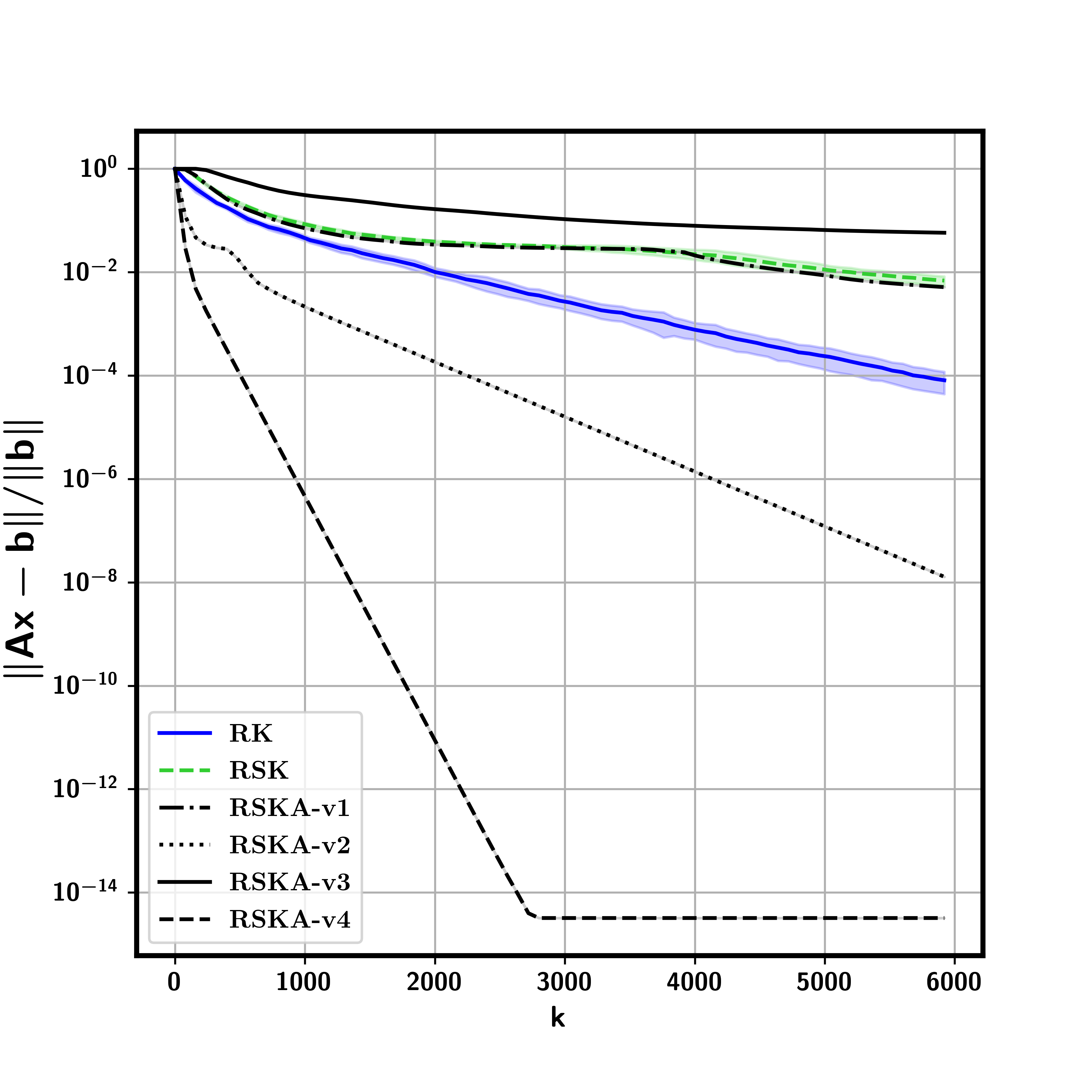} }}%
    \qquad
    \subfloat[\centering Error]{{\includegraphics[width=.45\linewidth]{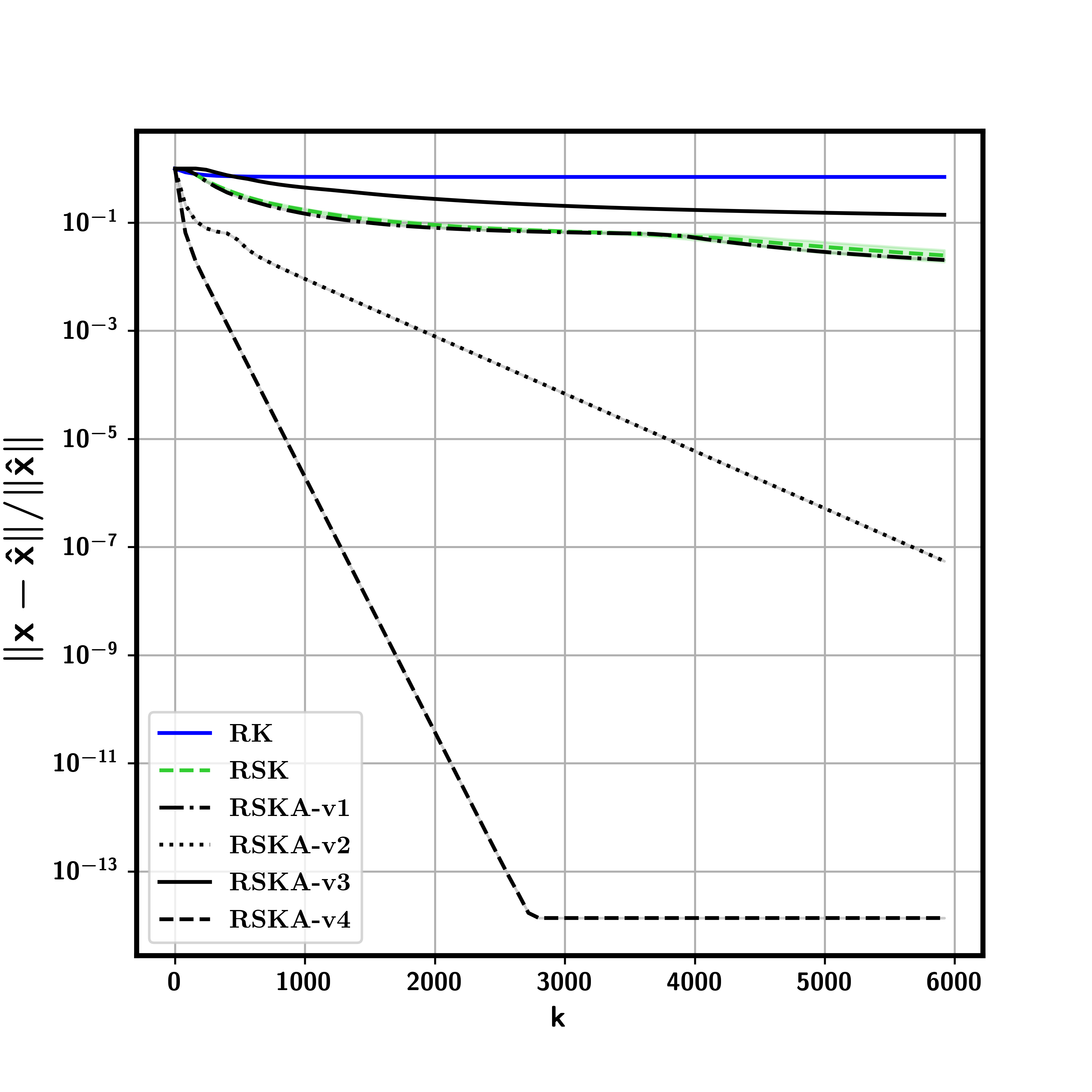} }}%
    \caption{\small A comparison of randomized Kaczmarz (blue), randomized sparse Kaczmarz (green) and RSKA method (black), $m = 100, n = 200,$ sparsity $s=10$, $\eta=11$, $\lambda=1$, no noise and 10 runs. Thick line shows mean over all trials, shaded area represent the standard deviation.}%
    \label{fig:example1}%
    \vspace{-0.8cm}
\end{figure}

\begin{figure}[htb]%
    \centering
    \subfloat[\centering Relative Residual]{{\includegraphics[width=.45\linewidth]{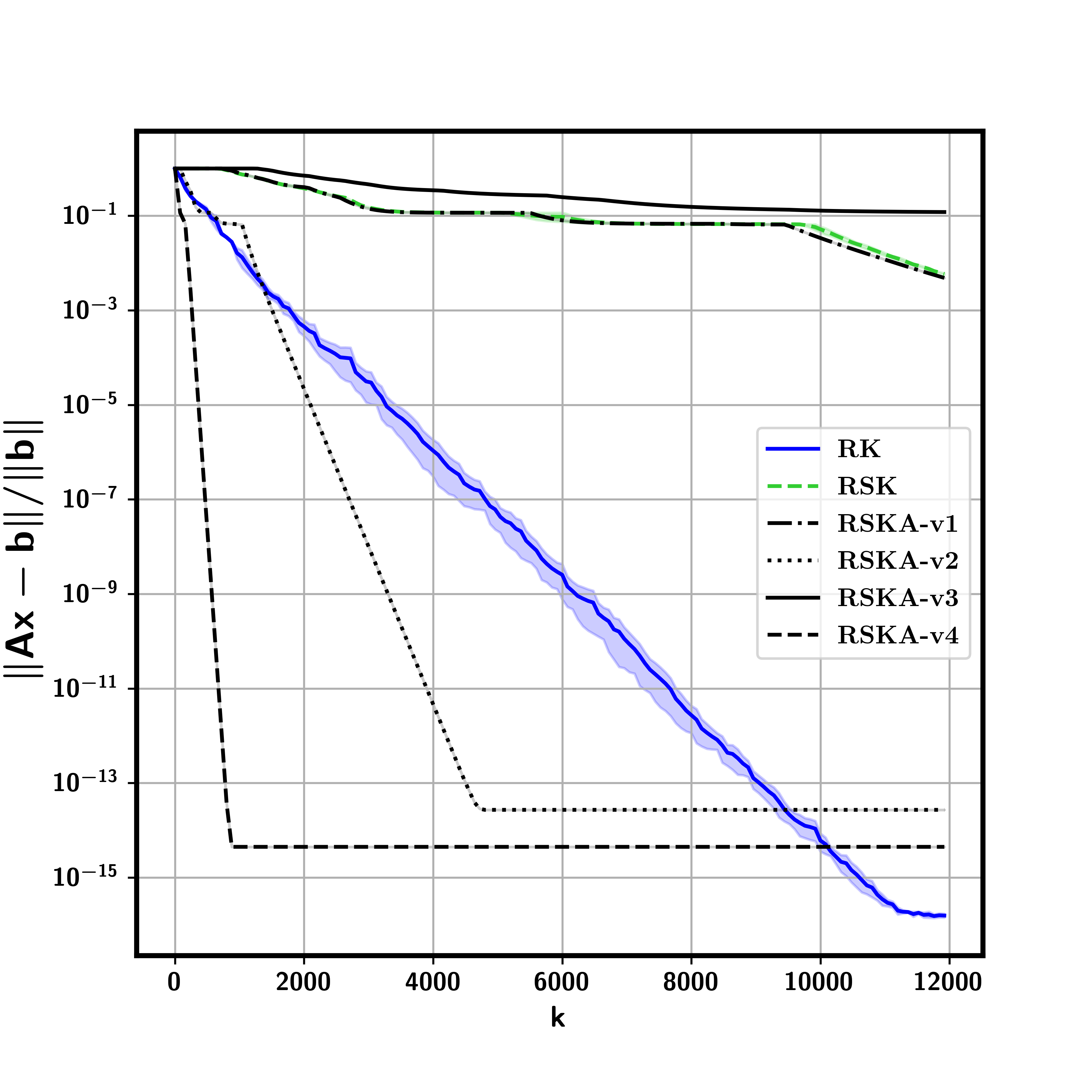} }}%
    \qquad
    \subfloat[\centering Error]{{\includegraphics[width=.45\linewidth]{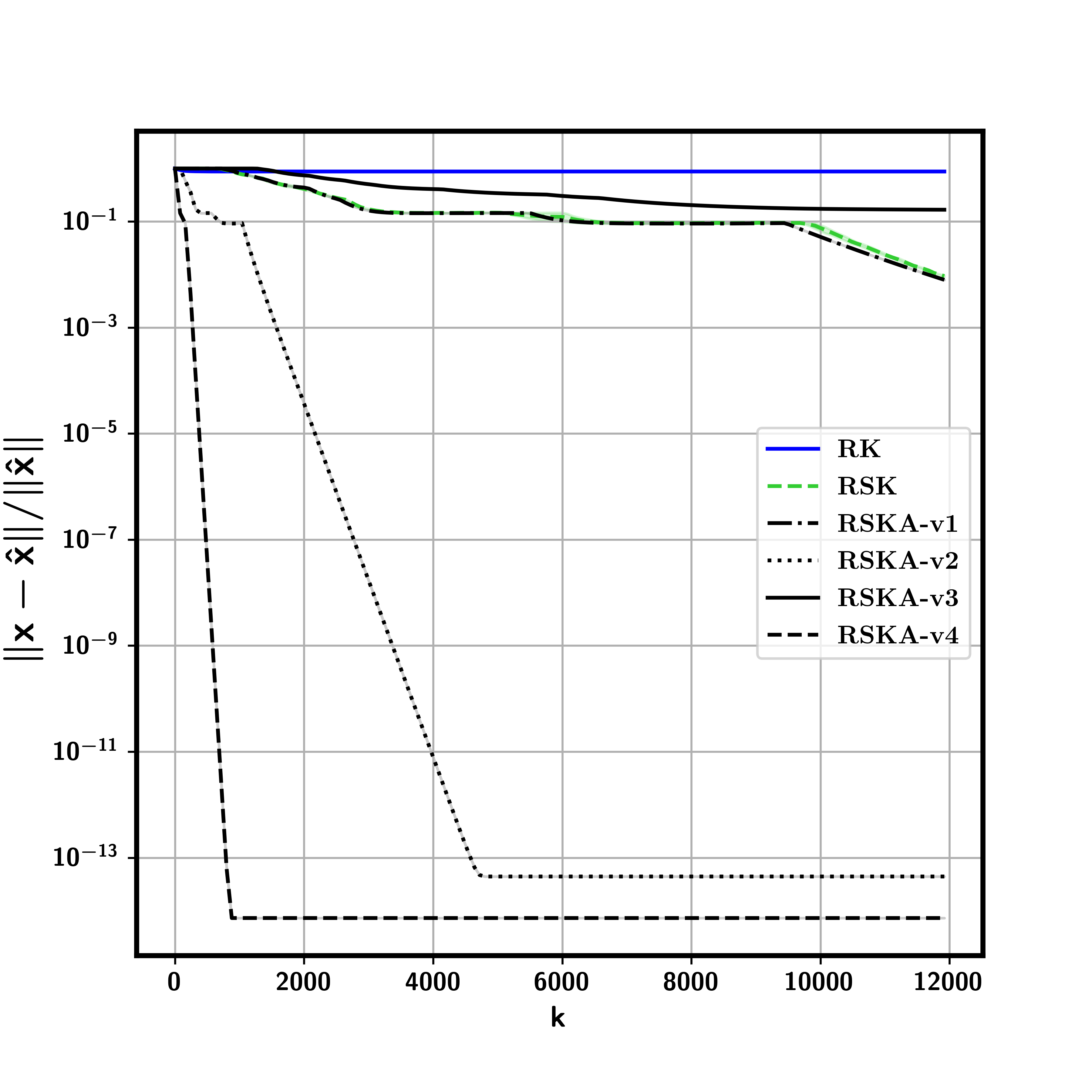} }}%
    \caption{\small A comparison of randomized Kaczmarz (blue), randomized sparse Kaczmarz (green) and RSKA method (black), $m = 100, n = 500,$ sparsity $s=10$, $\eta=11$, $\lambda=3$, no noise and 10 runs. Thick line shows mean over all trials, shaded area represent the standard deviation.}%
    \label{fig:example2}%
    \vspace{-0.4cm}
\end{figure}

\begin{figure}[htb]%
    \centering
    \subfloat[\centering Relative Residual]{{\includegraphics[width=.45\linewidth]{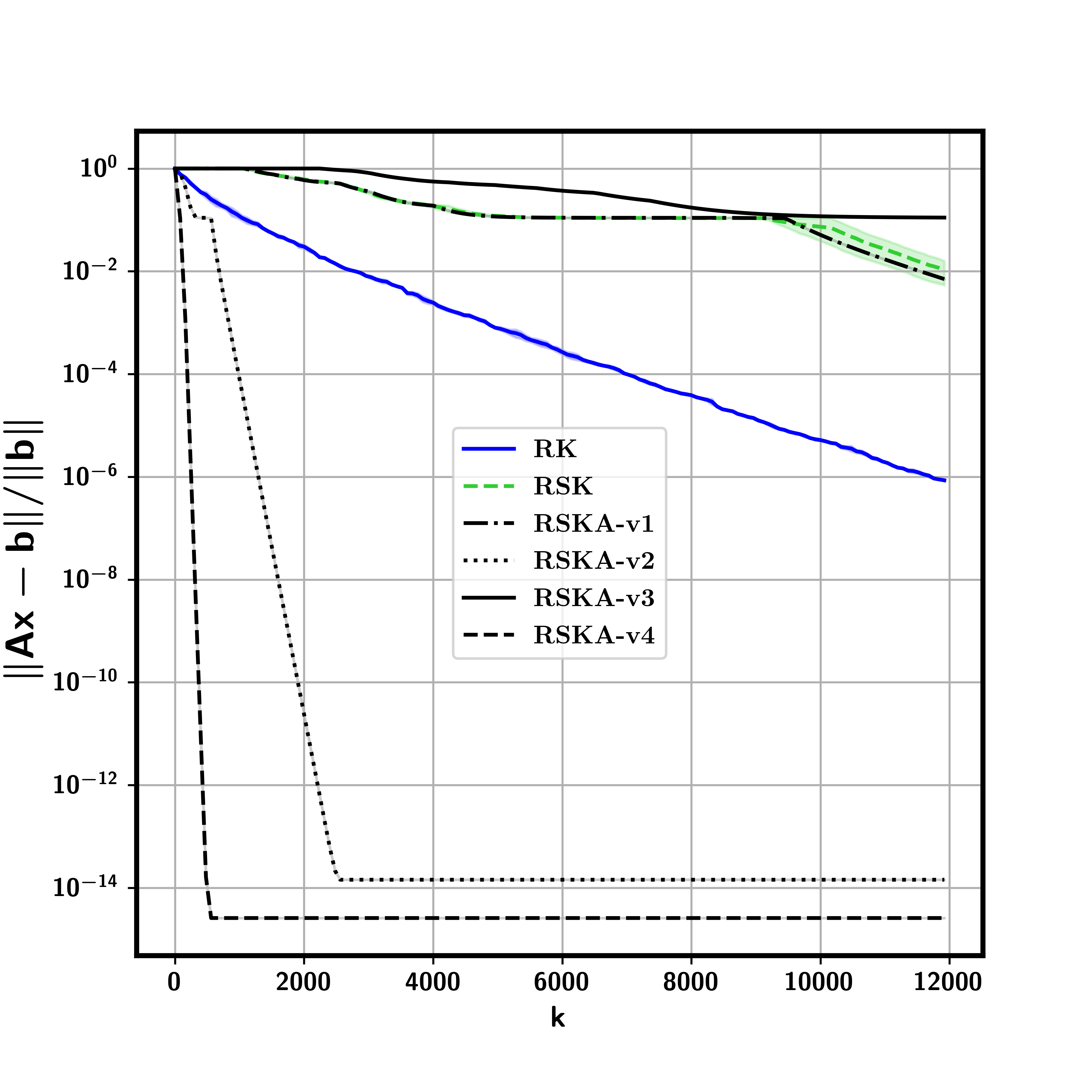} }}%
    \qquad
    \subfloat[\centering Error]{{\includegraphics[width=.45\linewidth]{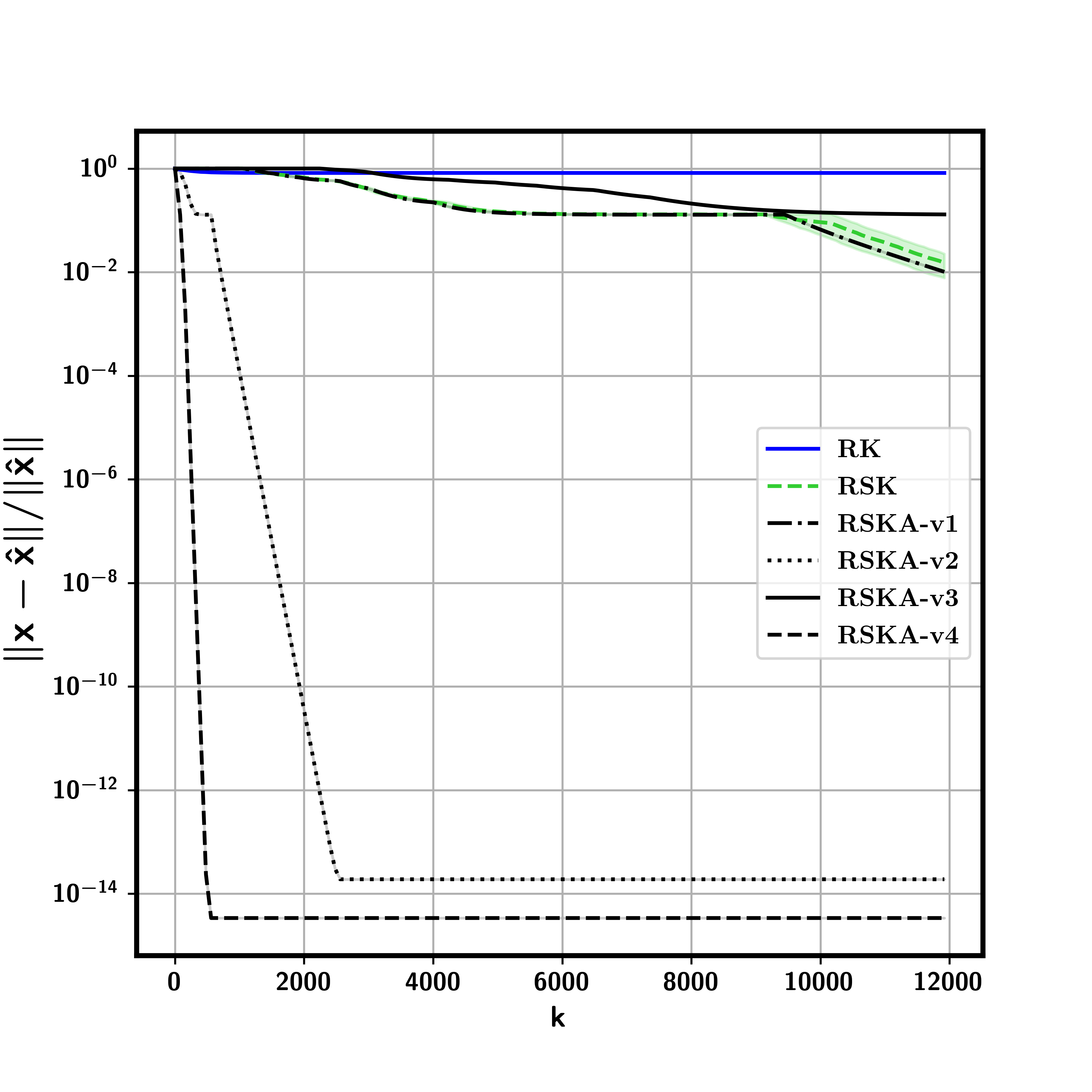} }}%
    \caption{\small A comparison of randomized Kaczmarz (blue), randomized sparse Kaczmarz (green) and RSKA method (black), $m = 200, n = 600,$ sparsity $s=10$, $\eta=21$, $\lambda=3$, no noise and 5 runs. Thick line shows mean over all trials, shaded area represent the standard deviation.}%
    \label{fig:example3}%
   \vspace{-0.8cm}
\end{figure}

\begin{figure}[htb]%
    \centering
    \subfloat[\centering Relative Residual]{{\includegraphics[width=.45\linewidth]{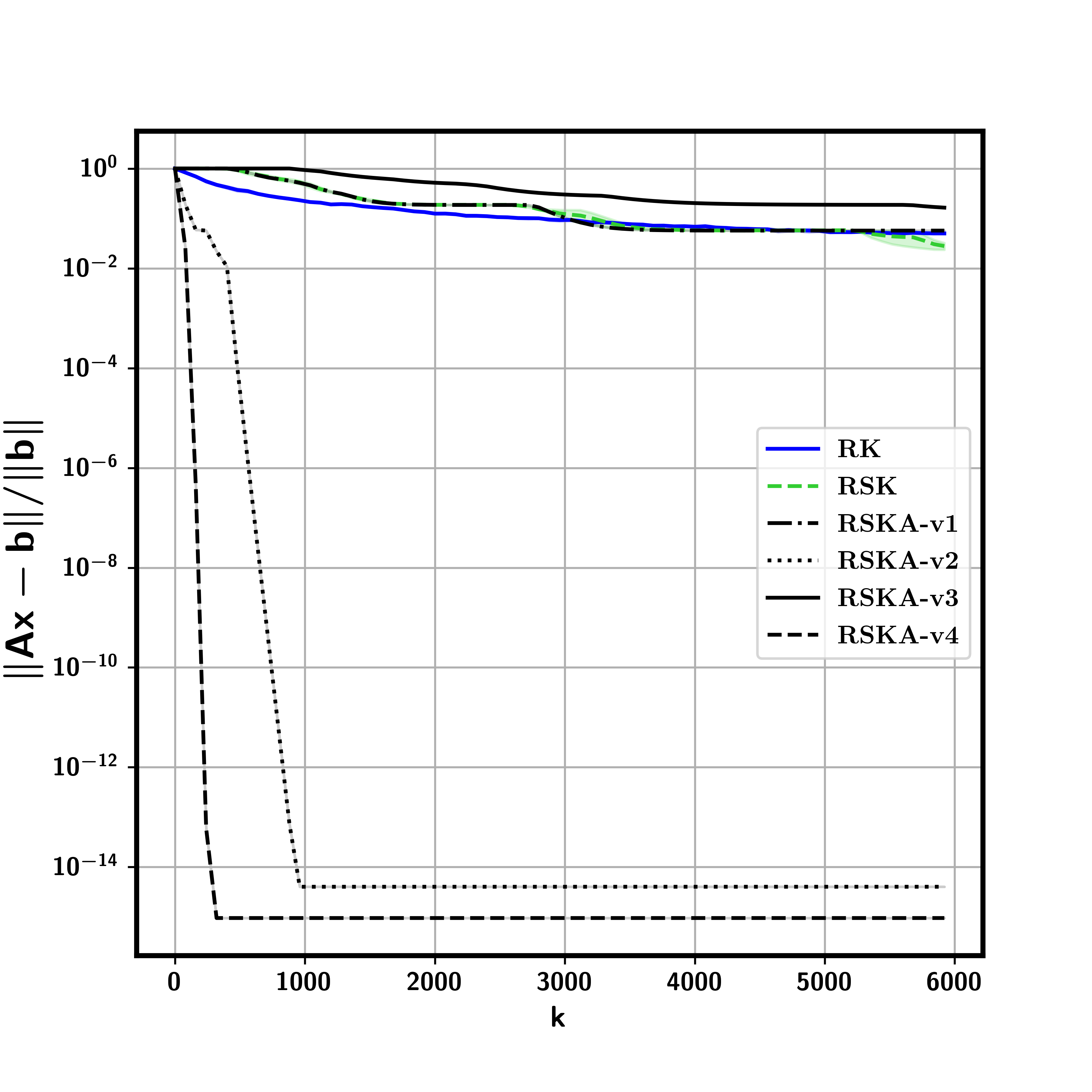} }}%
    \qquad
    \subfloat[\centering Error]{{\includegraphics[width=.45\linewidth]{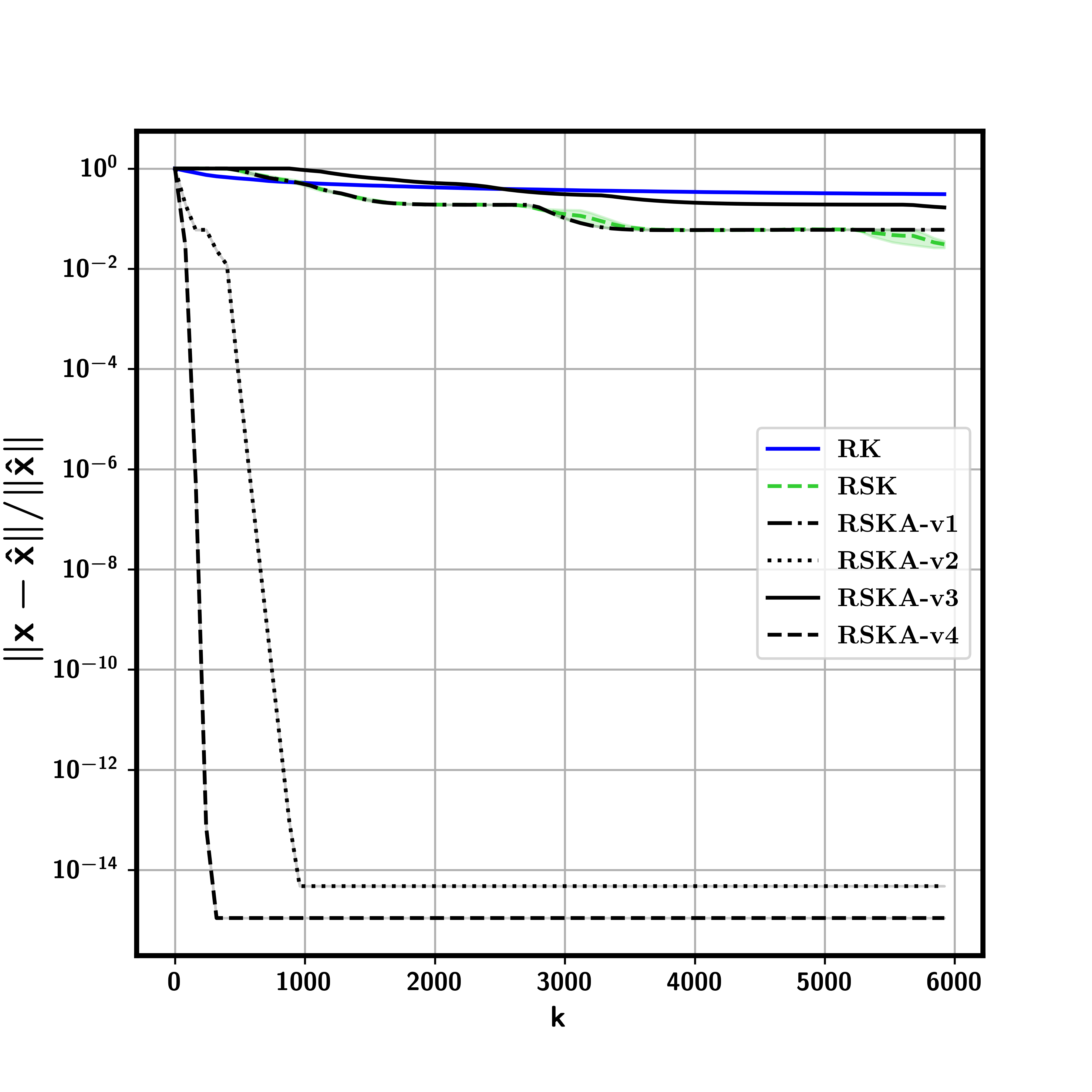} }}%
    \caption{\small A comparison of randomized Kaczmarz (blue), randomized sparse Kaczmarz (green) and RSKA method (black), $m = 300, n = 300,$ sparsity $s=10$, $\eta=31$, $\lambda=3$, no noise and 5 runs. Thick line shows mean over all trials, shaded area represent the standard deviation.}%
    \label{fig:example4}%
\end{figure}

\subsection{The effect of the number of threads $\eta$}
In Figure \ref{fig:example5}, \ref{fig:example13}, \ref{fig:example6}, and \ref{fig:example14}, we see the effects of the number of threads $\eta$ in the error of Algorithm \ref{alg:RSKA} for the variant $v2$. We used under- and overdetermined and consistent system, with $\lambda \in \{0.01, 3\}$ . For the small $\lambda = 0.01$, the RSKA behaves almost like the standard randomized Kaczmarz with averaging from~\cite{moorman2021randomized}, while for the larger value $\lambda = 3$, we see the typical behavior for the sparse Kaczmarz method which stagnates from time to time and switches to  faster improvement inbetween~\cite{LSW14}. As the number of threads $\eta$ increases, we see a corresponding decrease in the number of iterations needed to reach a certain accuracy, however at some point increasing $\eta$ does not improve the method in accordance with Remark \ref{rmk:rmk_lc}. For smaller values of $\eta$ we roughly see an $\eta$-times speedup in the number of iterations. Thus it is clear that the averaging will pay off as soon as the updates in RSKA can be done in parallel. 
\begin{figure}[htb]%
    \centering
    \subfloat[\centering Relative Residual]{{\includegraphics[width=.45\linewidth]{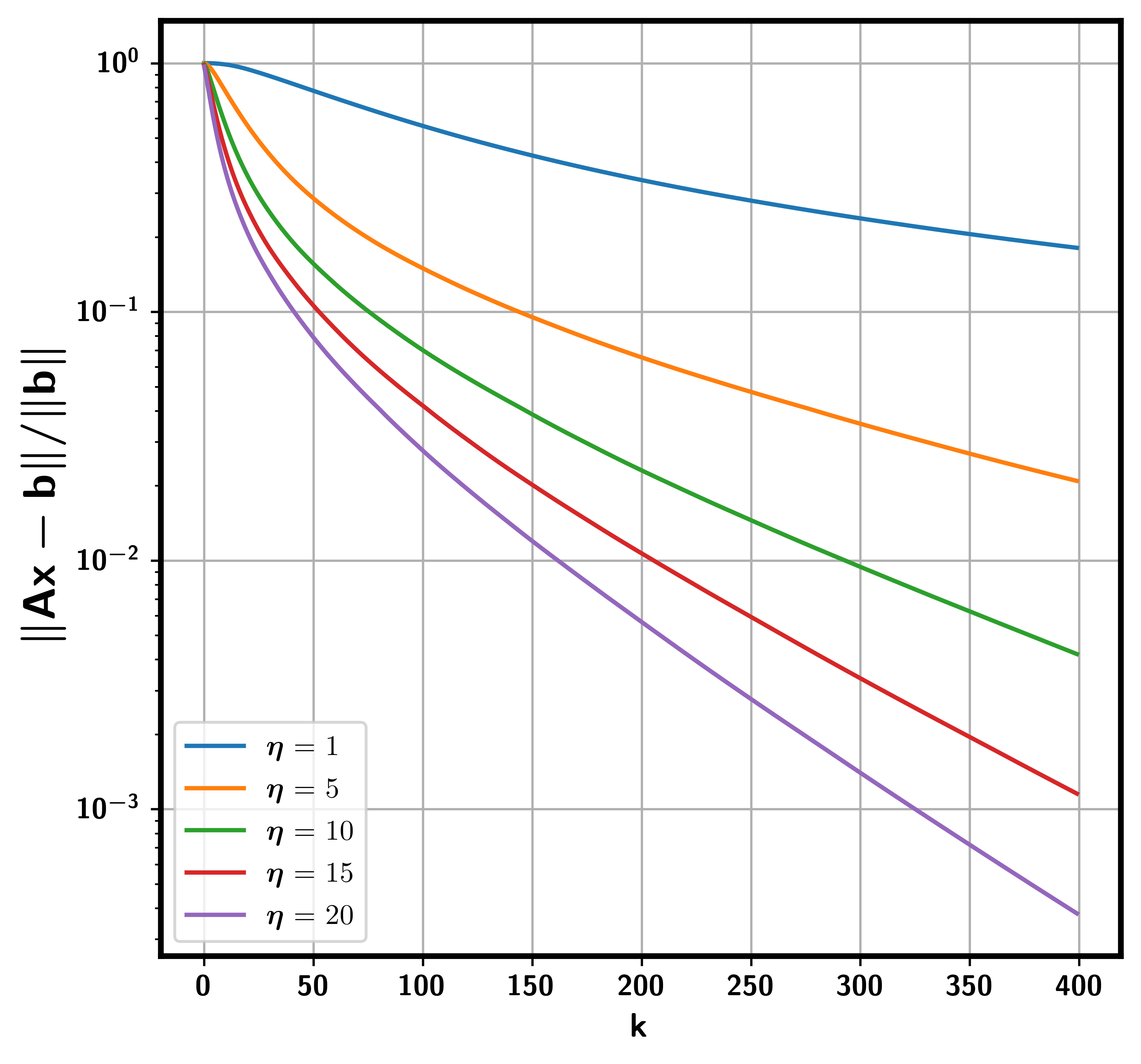} }}%
    \qquad
    \subfloat[\centering Error]{{\includegraphics[width=.47\linewidth]{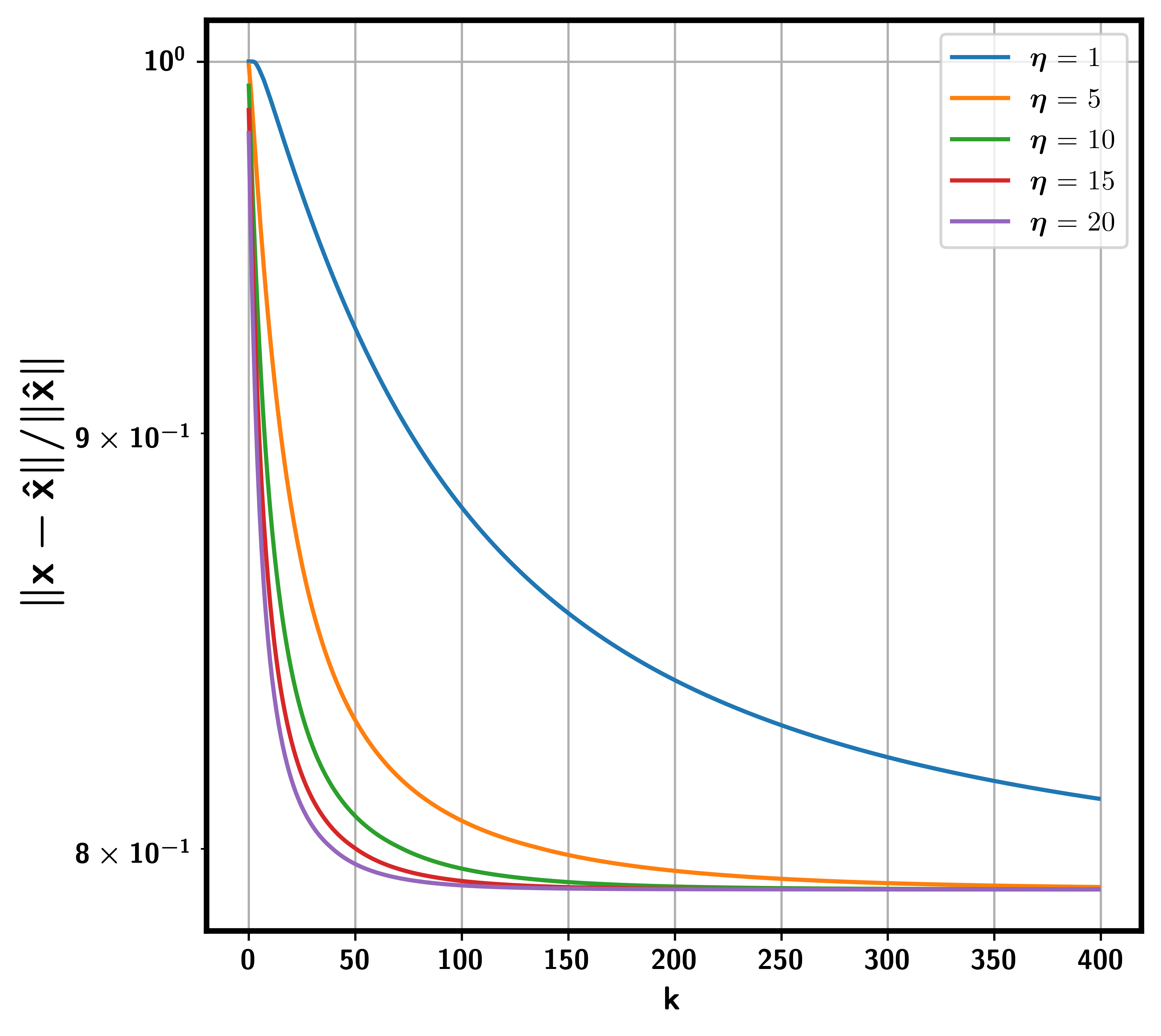} }}%
    \caption{\small The effect of the number of threads $\eta$ on the error and relative residual versus iteration for Algorithm \ref{alg:RSKA} on the variant $v2$. $m = 200, n = 600,$ sparsity $s=10$, $\lambda=0.01$, no noise.}%
    \label{fig:example5}%
    \vspace{-0.6cm}
\end{figure}

\begin{figure}[htb]%
    \centering
    \subfloat[\centering Relative Residual]{{\includegraphics[width=.45\linewidth]{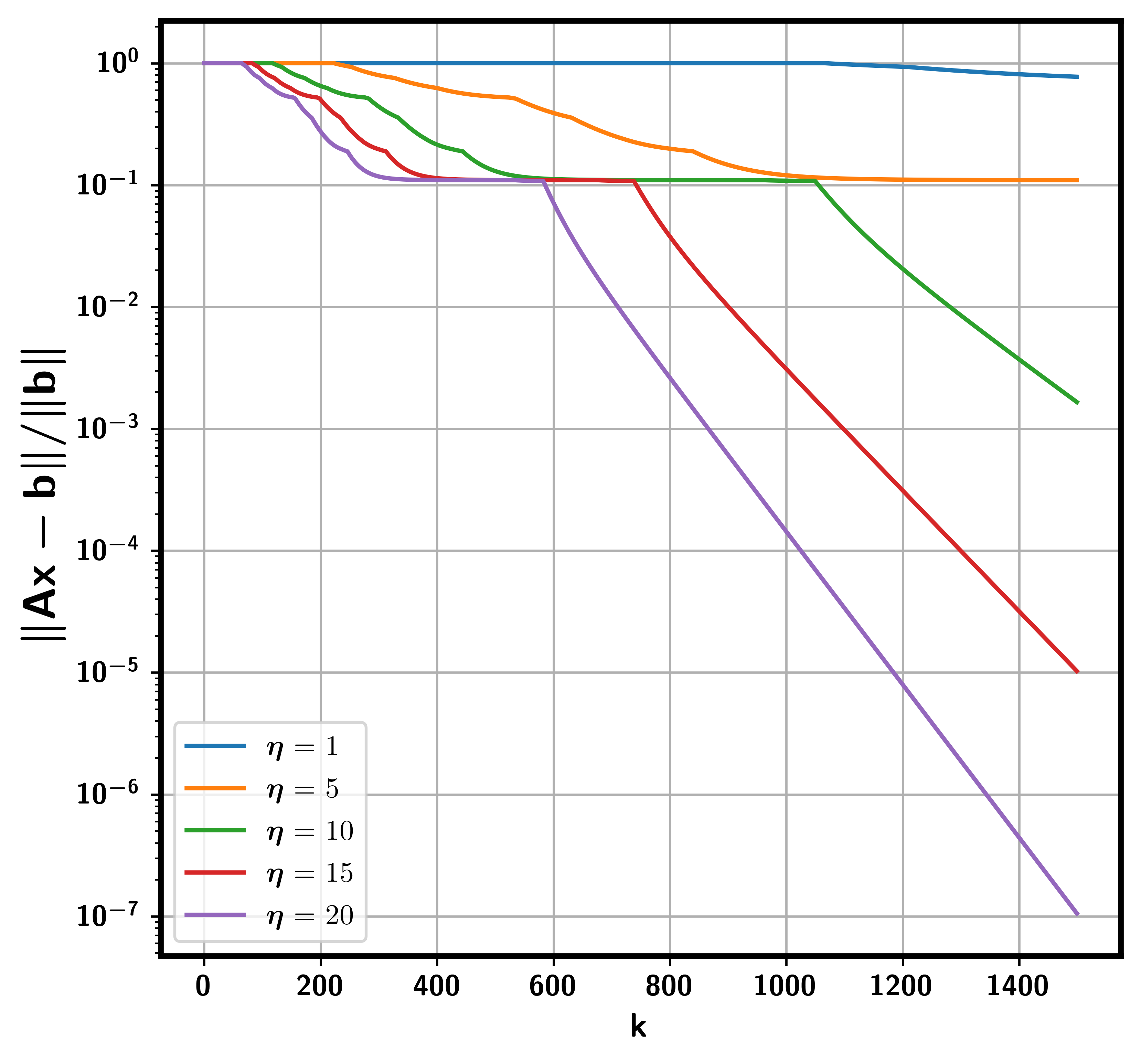} }}%
    \qquad
    \subfloat[\centering Error]{{\includegraphics[width=.45\linewidth]{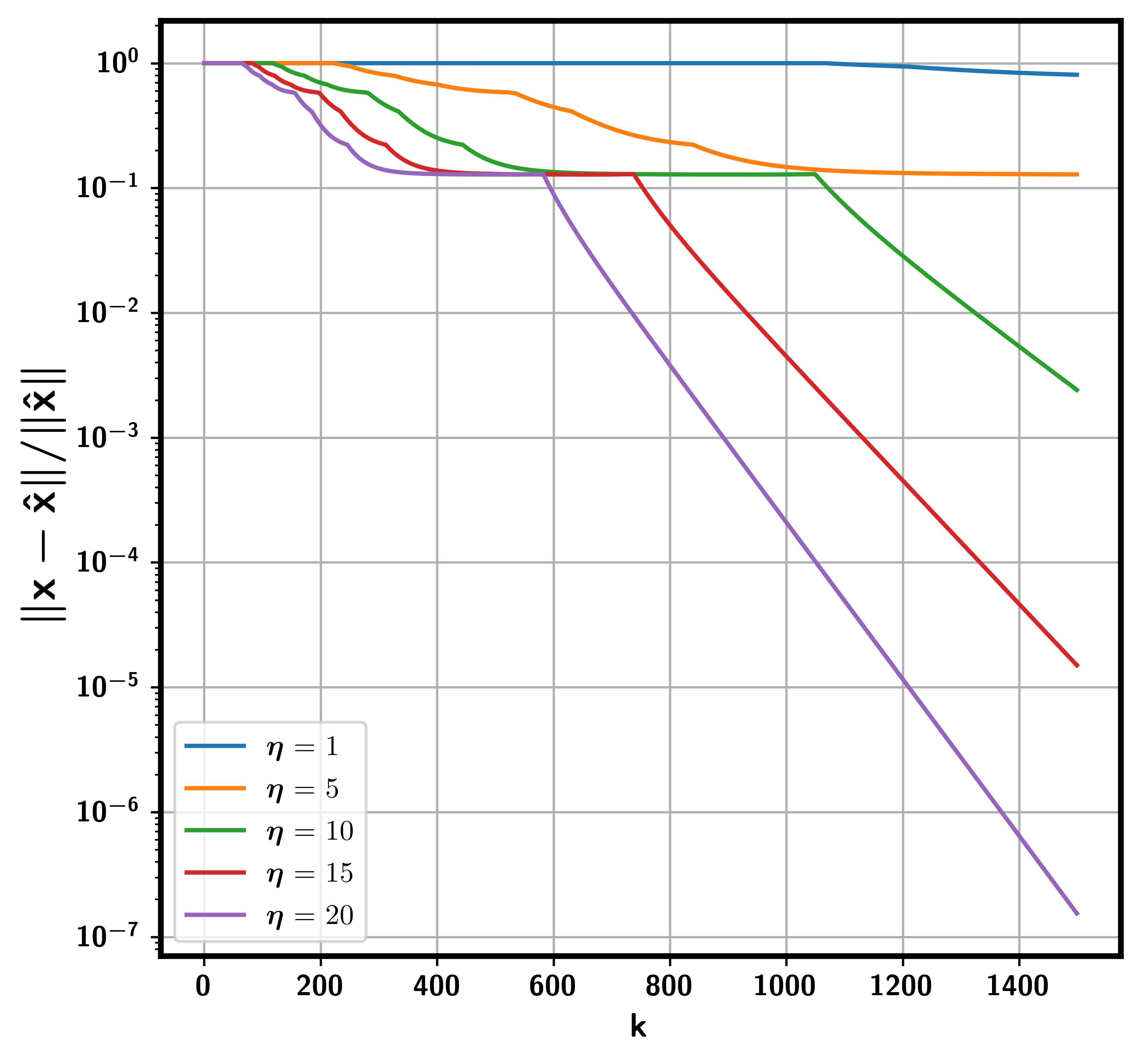} }}%
    \caption{\small The effect of the number of threads $\eta$ on the error and relative residual versus iteration for Algorithm \ref{alg:RSKA} on the variant $v2$. $m = 200, n = 600,$ sparsity $s=10$, $\lambda=3$, no noise.}%
    \label{fig:example13}%
    \vspace{-0.6cm}
\end{figure}

\begin{figure}[htb]%
    \centering
    \subfloat[\centering Relative Residual]{{\includegraphics[width=.45\linewidth]{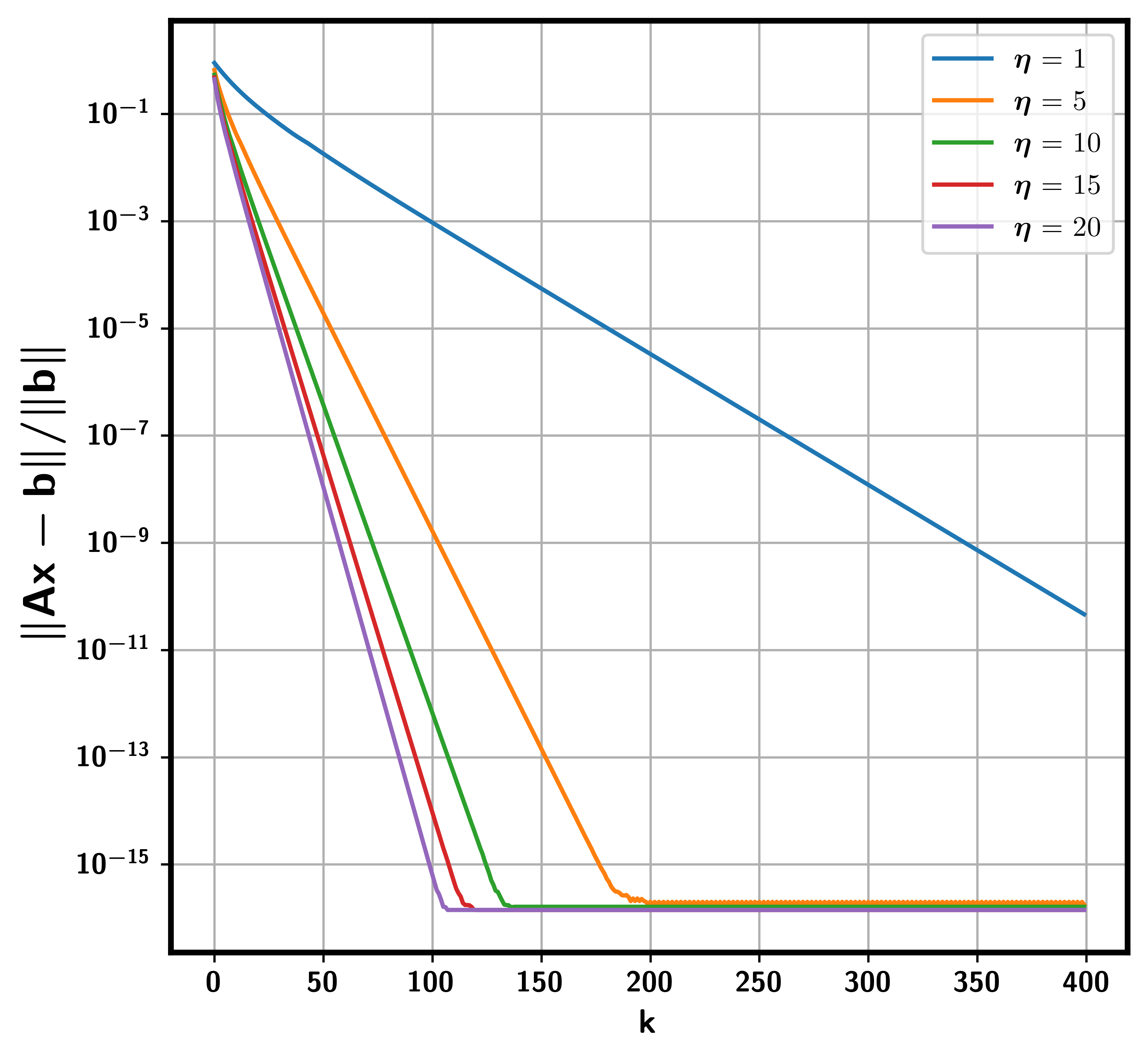} }}
    \qquad
    \subfloat[\centering Error]{{\includegraphics[width=.45\linewidth]{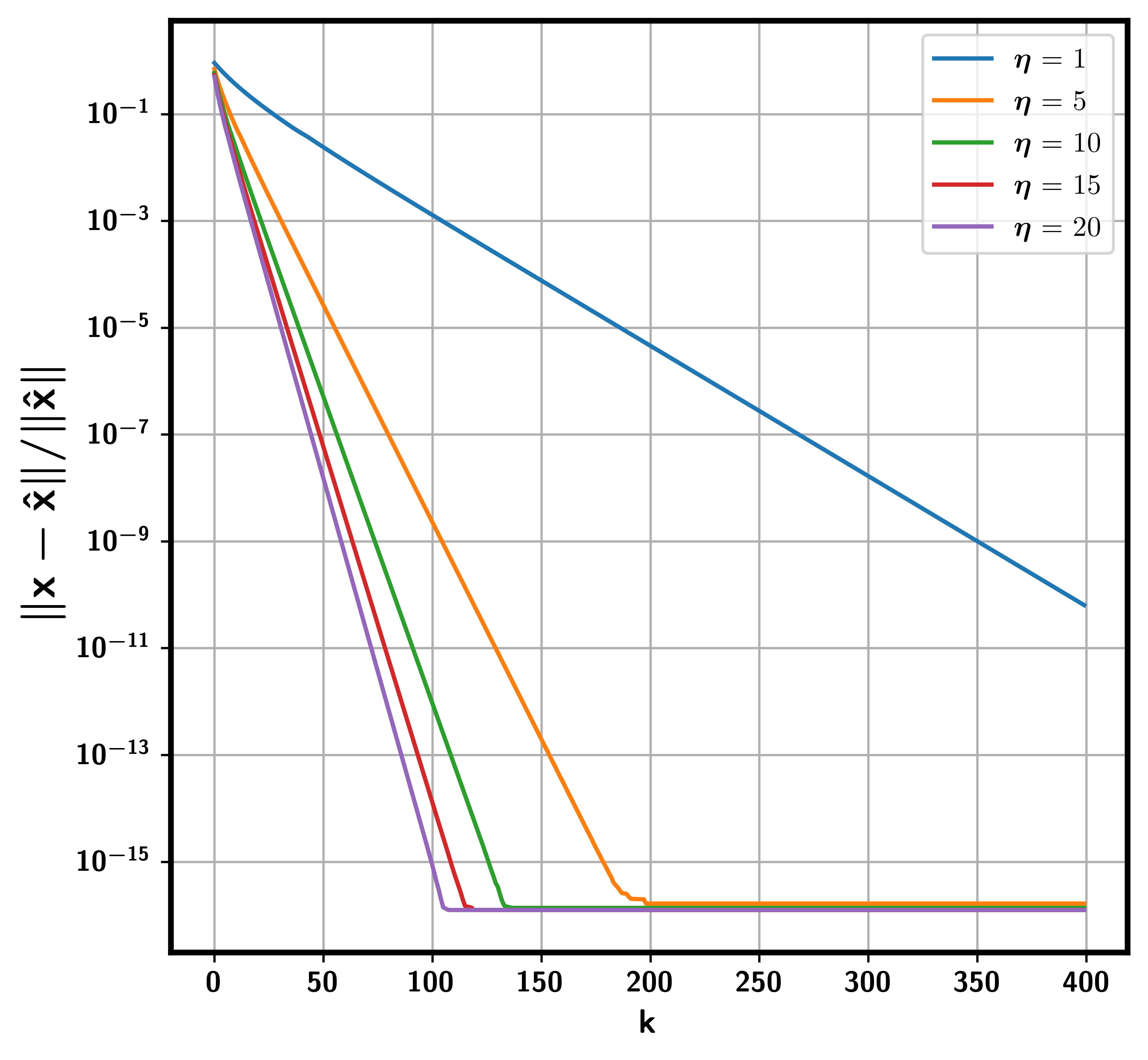} }}%
    \caption{\small The effect of the number of threads $\eta$ on the error and relative residual versus iteration for Algorithm \ref{alg:RSKA} on the variant $v2$. $m = 100, n = 10,$ sparsity $s=10$, $\lambda=0.01$, no noise.}%
    \label{fig:example6}%
    \vspace{-0.5cm}
\end{figure}

\begin{figure}[htb]%
    \centering
    \subfloat[\centering Relative Residual]{{\includegraphics[width=.45\linewidth]{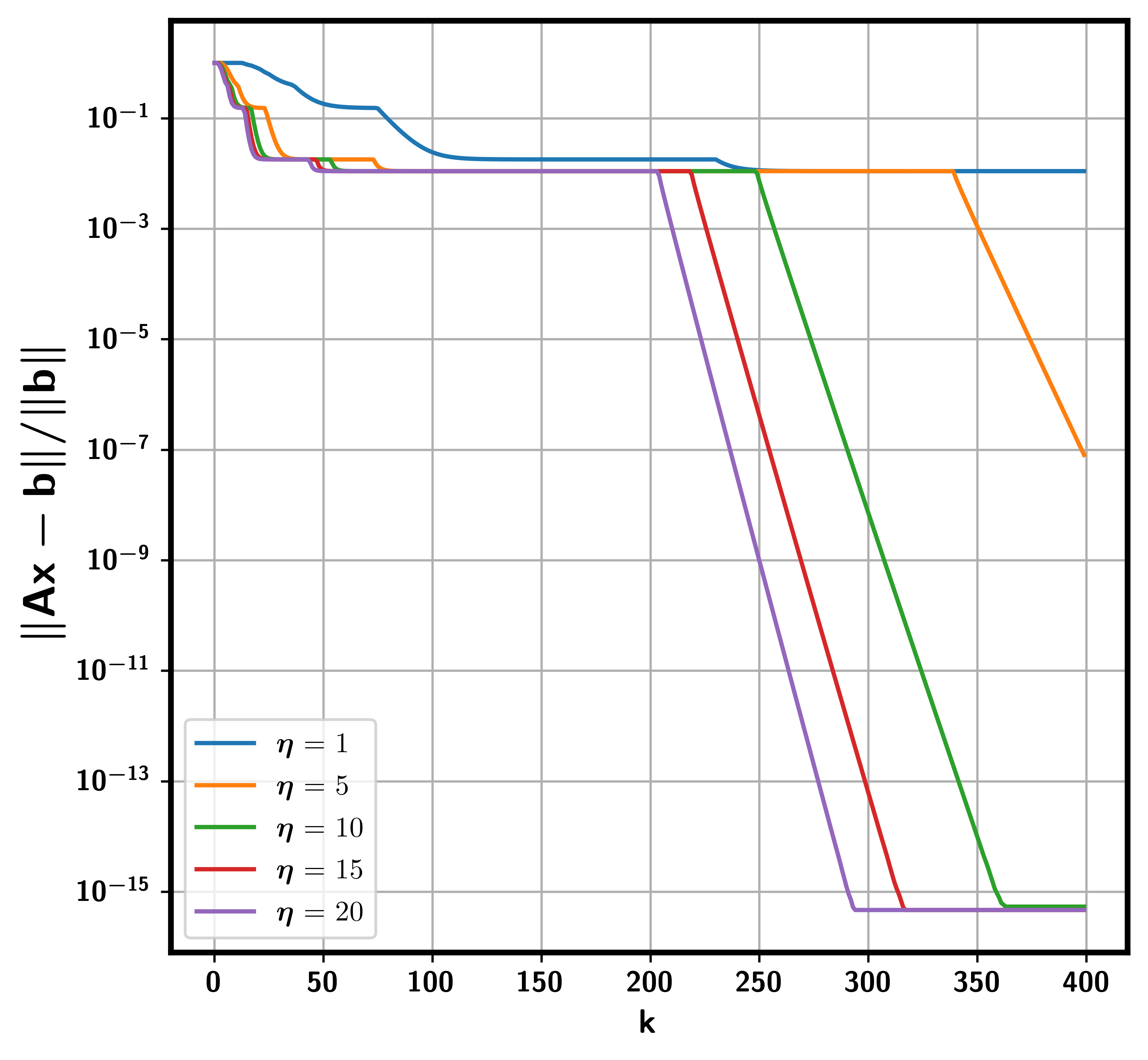} }}
    \qquad
    \subfloat[\centering Error]{{\includegraphics[width=.45\linewidth]{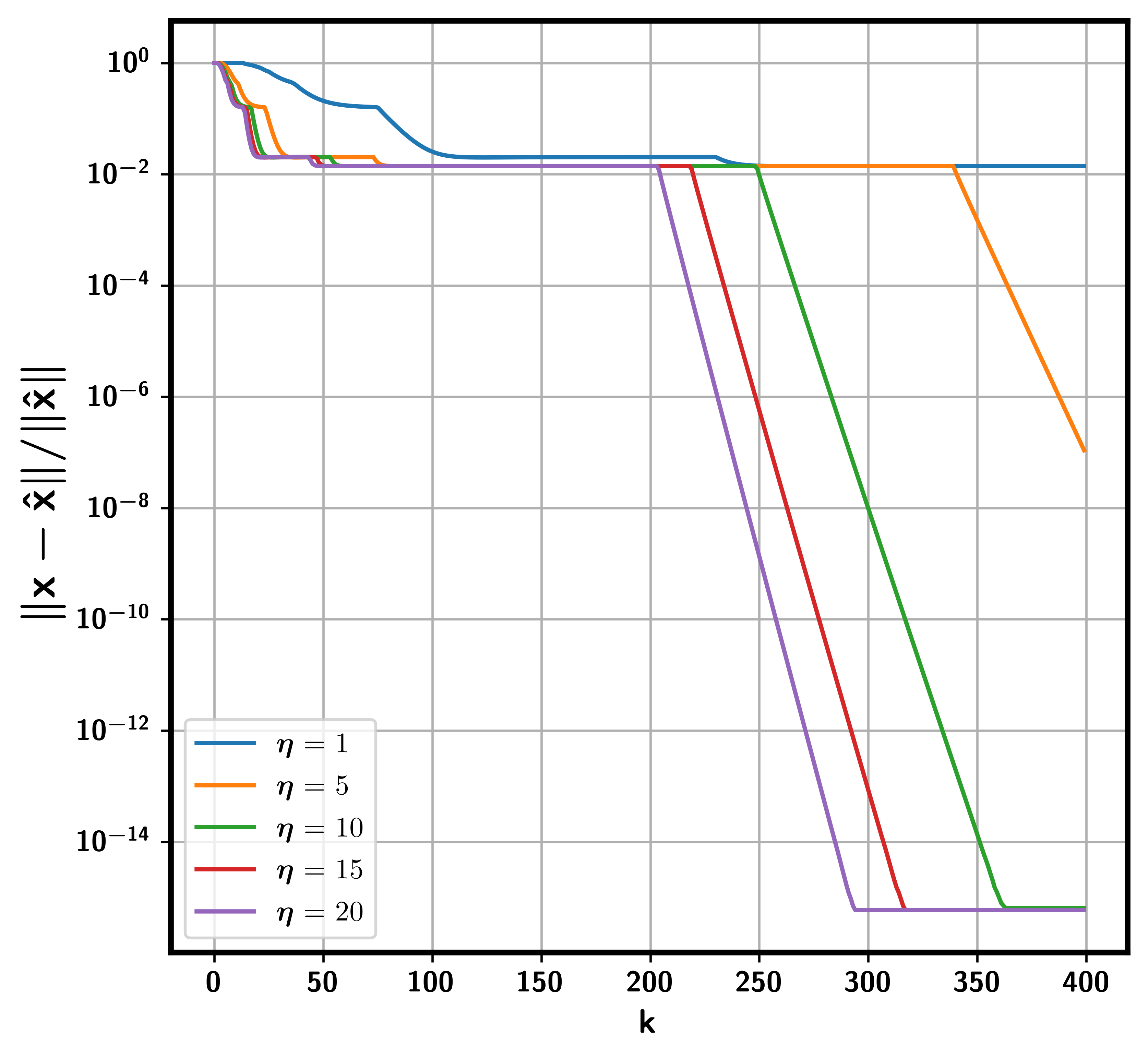} }}%
    \caption{\small The effect of the number of threads $\eta$ on the error and relative residual versus iteration for Algorithm \ref{alg:RSKA} on the variant $v2$. $m = 100, n = 10,$ sparsity $s=10$, $\lambda=3$, no noise.}%
    \label{fig:example14}%
    \vspace{-0.5cm}
\end{figure}

\subsection{The effect of the relaxation parameter $\alpha$}
\label{sec:effect-relaxation}
In Figure \ref{fig:example7}, we observe the effect on the convergence rate as we vary the relaxation parameter $\alpha$. We used an underdetermined and consistent system with $\lambda = 0.1$, $\eta = 8$ with variant $v2$. In fact increasing $\alpha$ allow us to get smaller error, however the method can ultimately diverge for some larger values of the relaxation parameter $\alpha$. This is observed in Figure \ref{fig:example8} which plot the relative residual and the error after 100 iterations on a smaller example for various relaxations parameters $\alpha$ and batch sizes $\eta$. The theoretically optimal parameter $\alpha^{*}$ from Corollary~\ref{cor:linear_convergence} is indicated as a dot. We used an overdetermined and consistent system with $\lambda = 1$ with variant $v2$.  The plots confirms that $\alpha$ can not be chosen too large, i.e there exists an $\eta$-depended upper bound for $\alpha$ which leads to convergence (cf. Theorem~\ref{th:RSKA}). However we also observe that a larger relaxation parameter than $\alpha^{*}$ from Corollary~\ref{cor:linear_convergence} leads to even faster convergence.
\begin{figure}[htb]%
    \centering
    \subfloat[\centering Relative Residual]{{\includegraphics[width=.45\linewidth]{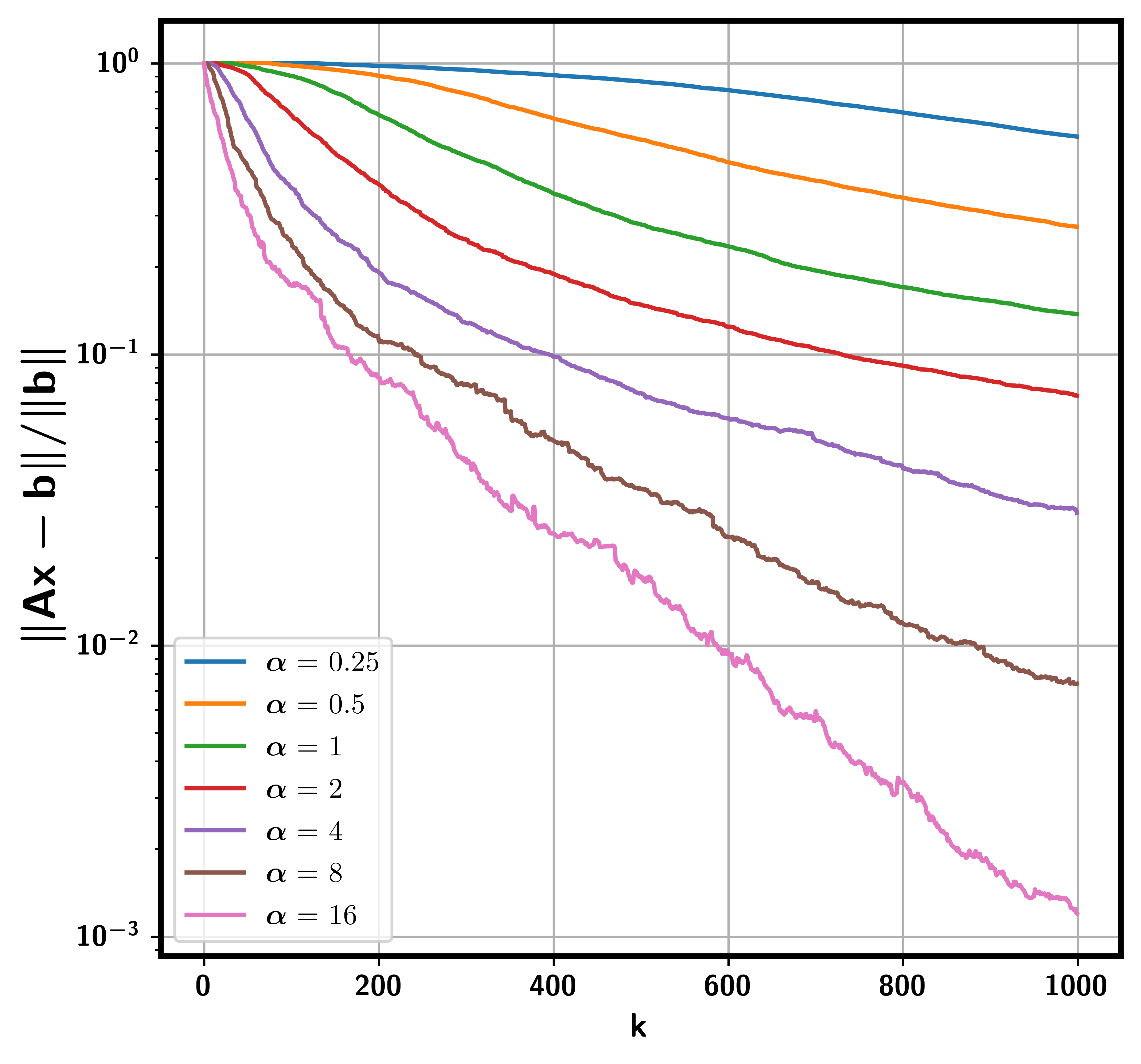} }}%
    \subfloat[\centering Error]{{\includegraphics[width=.47\linewidth]{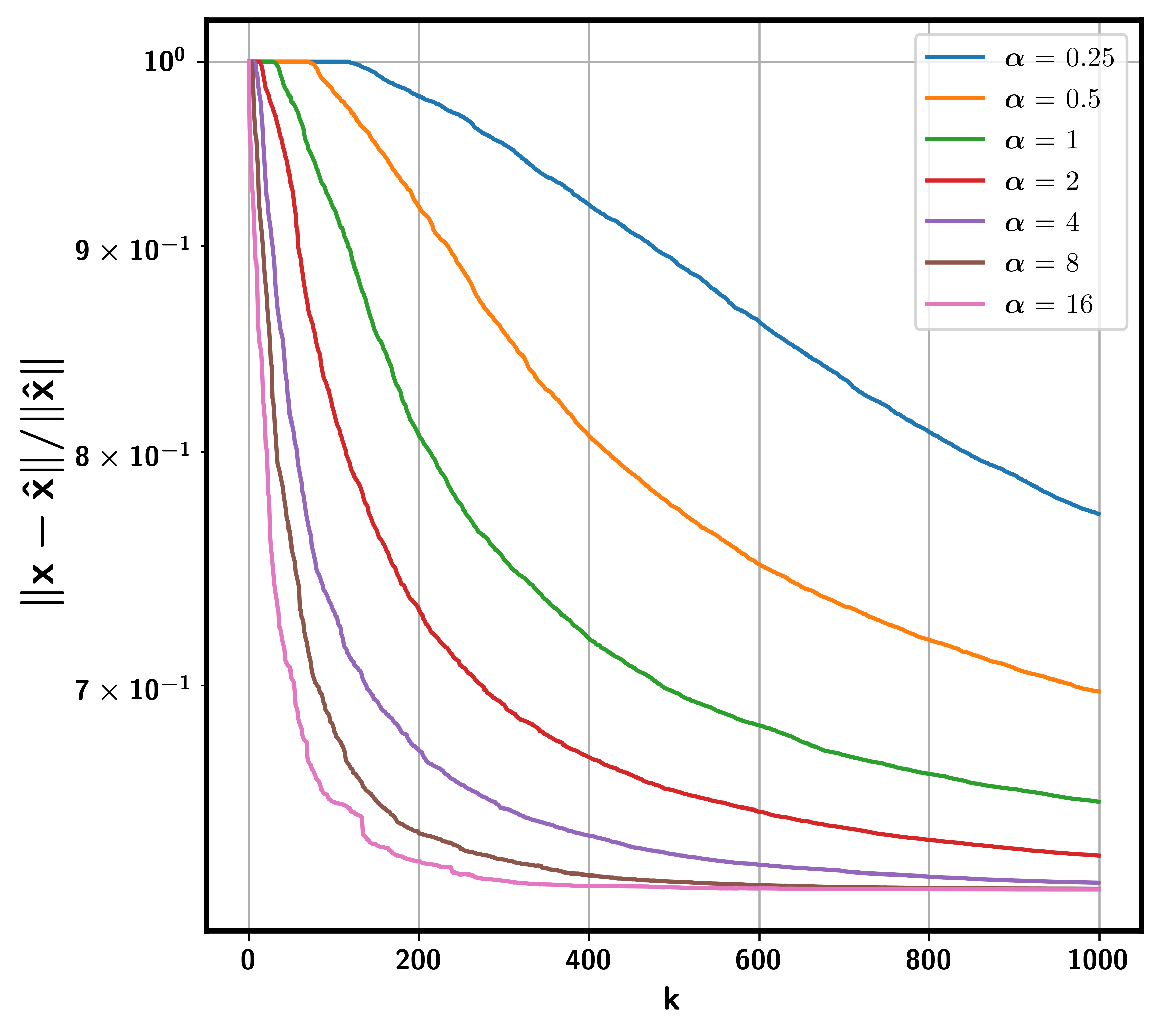} }}%
    \caption{\small The effect of the relaxation parameter $\alpha$ on the error and relative residual versus iteration for Algorithm \ref{alg:RSKA} on the variant $v2$. $m = 200, n = 600,$ sparsity $s=10$, $\eta=8$, $\lambda=0.1$, no noise.}%
    \label{fig:example7}%
\end{figure}


\begin{figure}[htb]%
    \centering
    \includegraphics[width=.5\linewidth]{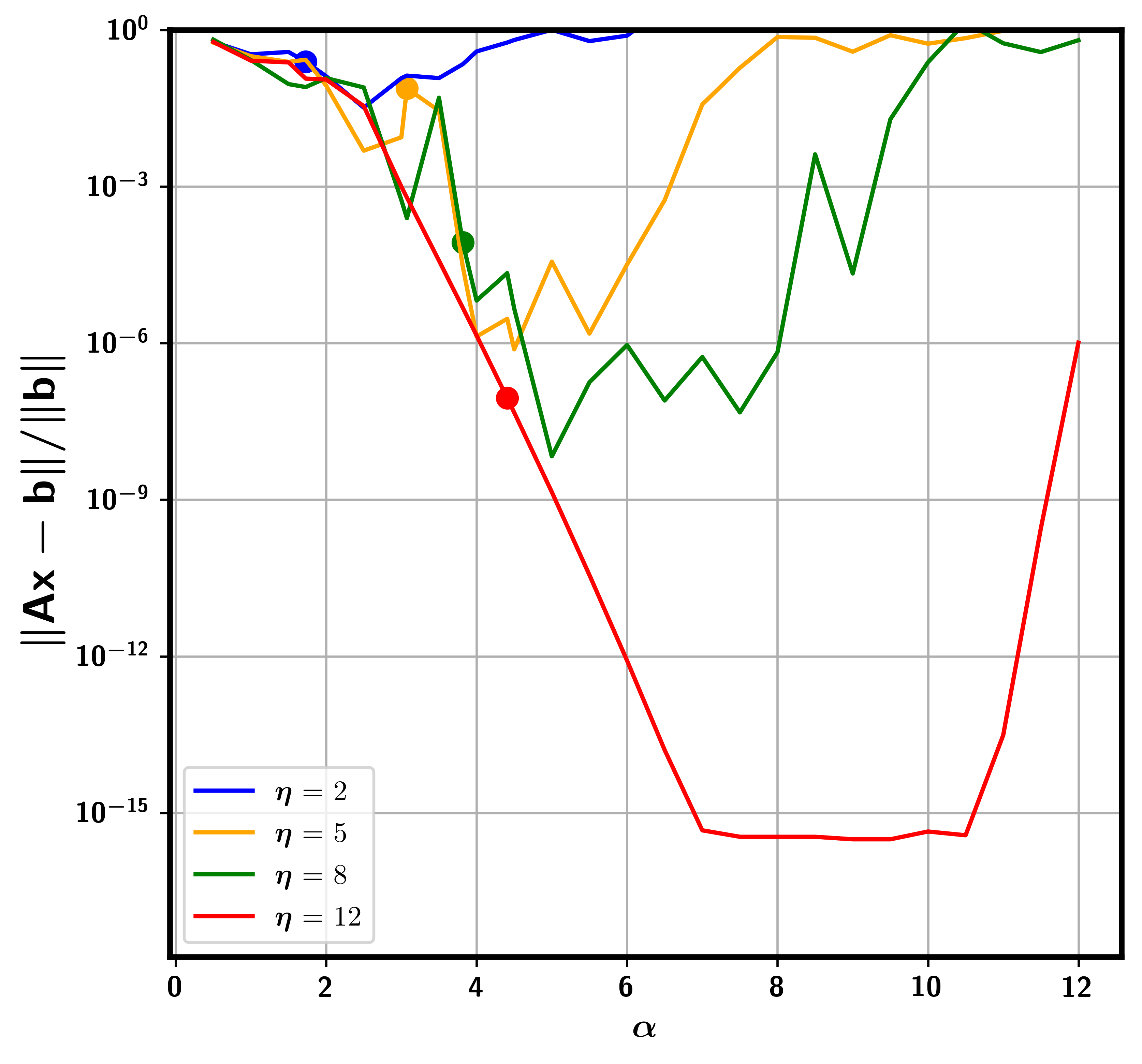}%
    \caption{\small The effect of the relaxation parameter $\alpha$ for various values of $\eta$ on the relative residual after $100$ iterations of Algorithm \ref{alg:RSKA} with the variant $v2$. $m = 100, n = 10,$ sparsity $s=10$, $\lambda=6$, no noise. Circle markers are estimates of the optimal relaxation parameter using Corollary \ref{cor:linear_convergence}.}%
    \label{fig:example8}%
\end{figure}

\subsection{The effect of the sparsity parameter $\lambda$}
In Figure \ref{fig:example9}, we see the effects of the sparsity parameter $\lambda$ on the approximation error of Algorithm \ref{alg:RSKA} and the randomized Kaczmarz method (RK). We used an underdetermined and consistent system with $\eta = 21$ with variant $v2$ of RSKA. We observed that as you increase the sparsity parameter $\lambda$, the relative residual and the reconstruction error get worse and RSK is more affected by this behavior whereas RSKA keep it performance along different $\lambda$. The first row of Figure \ref{fig:example9} correspond to experiment where we run 1000 iterations and in the second row the methods are run for $(1 + \lambda)1000$ iterations for all $\lambda$.

\begin{figure}[htb]%
    \centering
    \subfloat[\centering Relative Residual]{{\includegraphics[width=.45\linewidth]{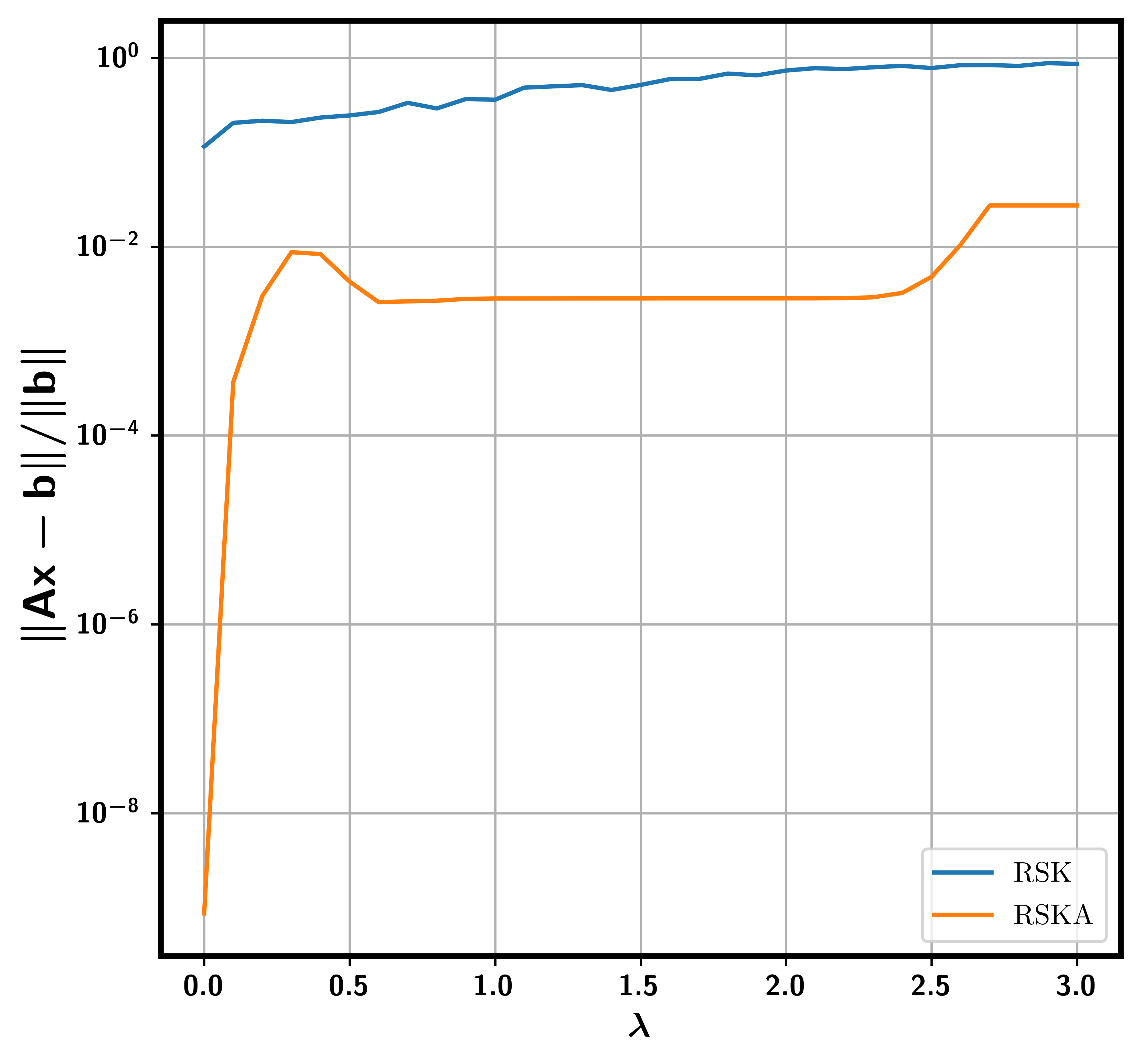} }}%
    \qquad
    \subfloat[\centering Error]{{\includegraphics[width=.45\linewidth]{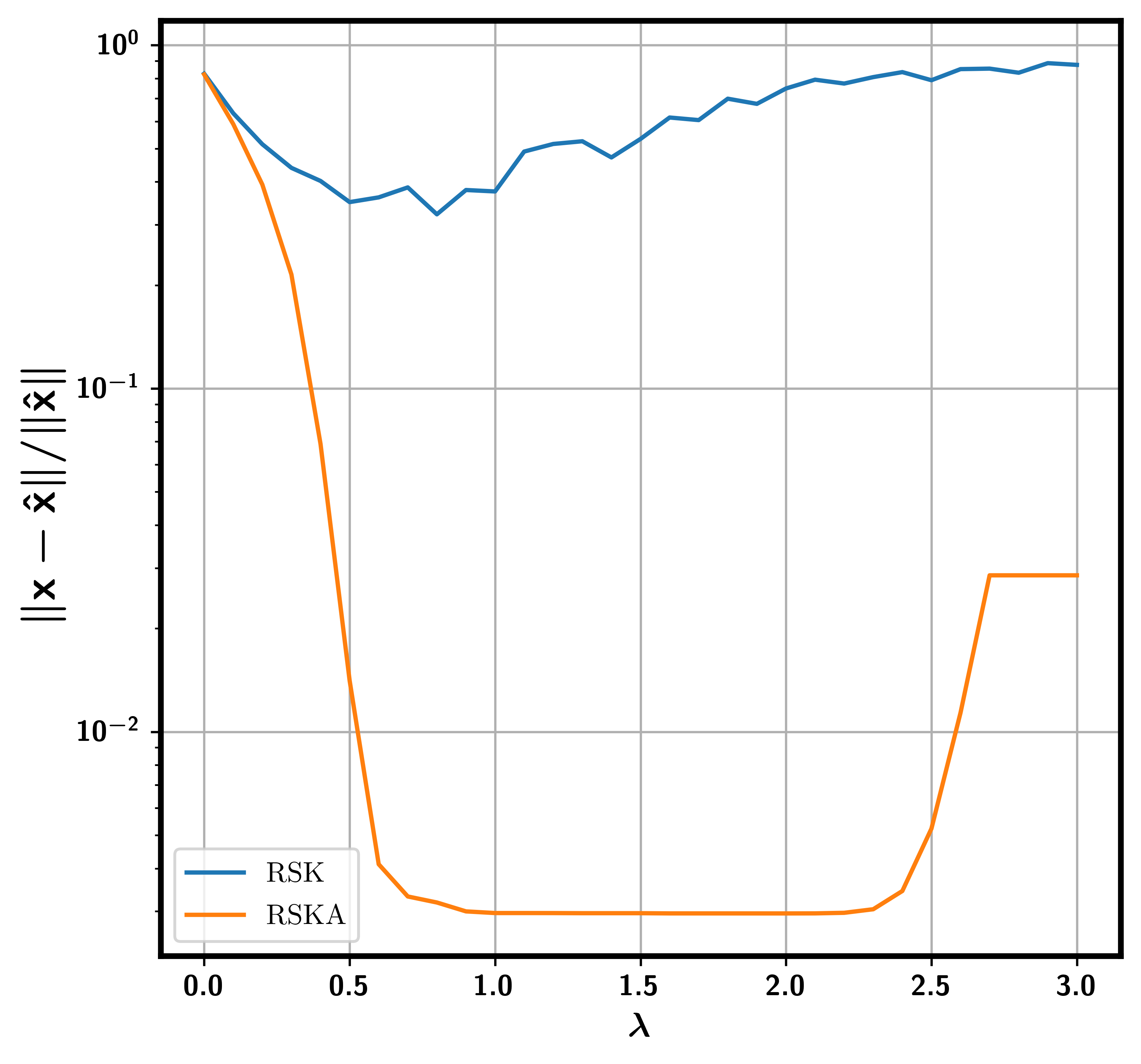} }}\\
    \subfloat[\centering Relative Residual]      {
    \includegraphics[width=.45\linewidth]{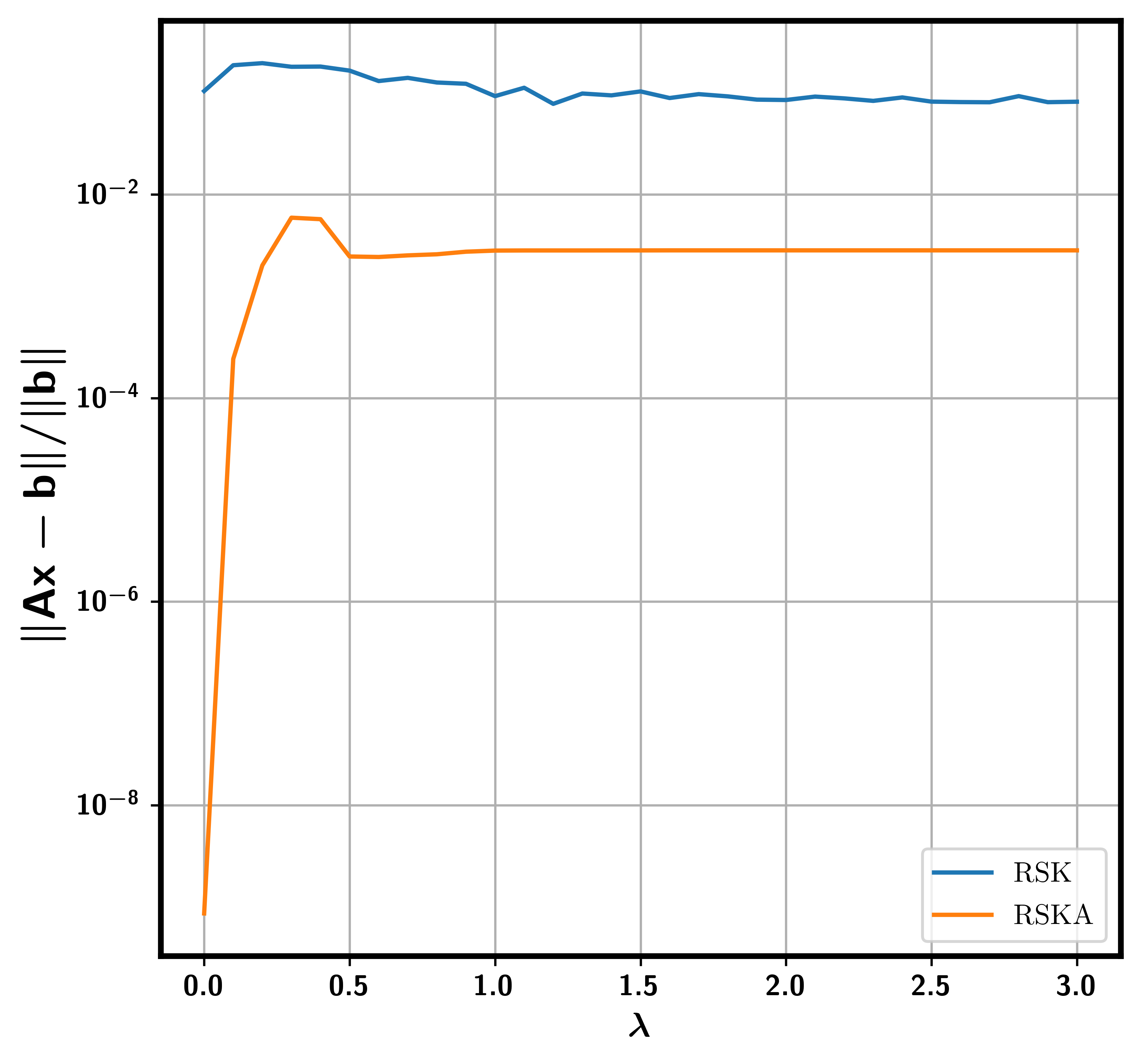}
    }\qquad
    \subfloat[\centering Error]      {
    \includegraphics[width=.45\linewidth]{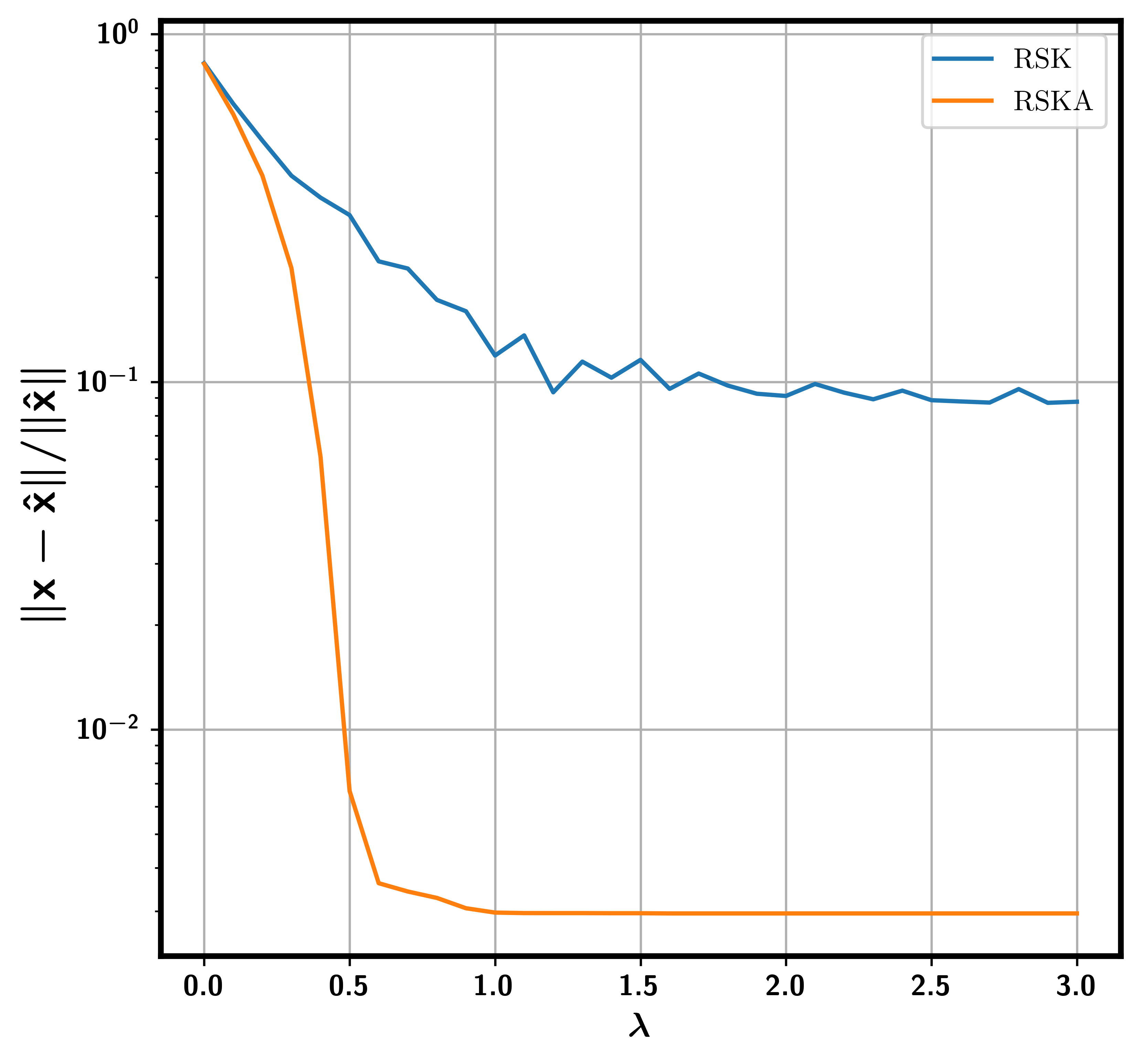}
    }
    \caption{\small A comparison of randomized sparse Kaczmarz (blue) and RSKA-$v2$ method (orange) in terms of the sparsity parameter $\lambda$. $m = 200, n = 600,$ sparsity $s=10$, $\eta=21$, no noise. In the first row $(a)$ and $(b)$, methods are run for 1000 iterations for all $\lambda$ whereas in the second row $(c)$ and $(d)$, methods are run for $(1 + \lambda)1000$ iterations for all $\lambda$.}%
    \label{fig:example9}%
\end{figure}

\subsection{Noisy case}
\label{sec:noisy}
In this part we are interested in the effectiveness of the RSKA method on inconsistent systems. We construct a sparse $\hat x$ with normally distributed non-zero entries and set $b = \mathbf{A}\hat{x}, \quad b^{\epsilon} = b + \epsilon $ where $\epsilon$ is a random vector uniformly distributed on a sphere with radius $\ell$ that correspond to the relative noise level such that $\|b - b^{\epsilon}\|_2 \leq \ell$. 
Figures \ref{fig:example10}, \ref{fig:example11} and \ref{fig:example12} show the results for noisy right hand sides, all with $\lambda=1$ for RSK and RSKA. Figure \ref{fig:example11} uses a five times overdetermined system with $10 \%$ relative noise, Fig. \ref{fig:example12} has the same noise level and a five times underdetermined system. In the underdetermined case, all methods consistently stagnate at a residual level which is comparable to the noise level, however, in all settings, RSKA variants achieves faster convergence than RSK which in turn is faster than RK. Regarding the reconstruction error, RSKA and RSK achieve reconstructions with an error in the size of the noise level, while RSKA achieves an even lower reconstruction error.
\begin{figure}[htb]%
\vspace{-0.4cm}
    \centering
    \subfloat[\centering Relative Residual]{{\includegraphics[width=.45\linewidth]{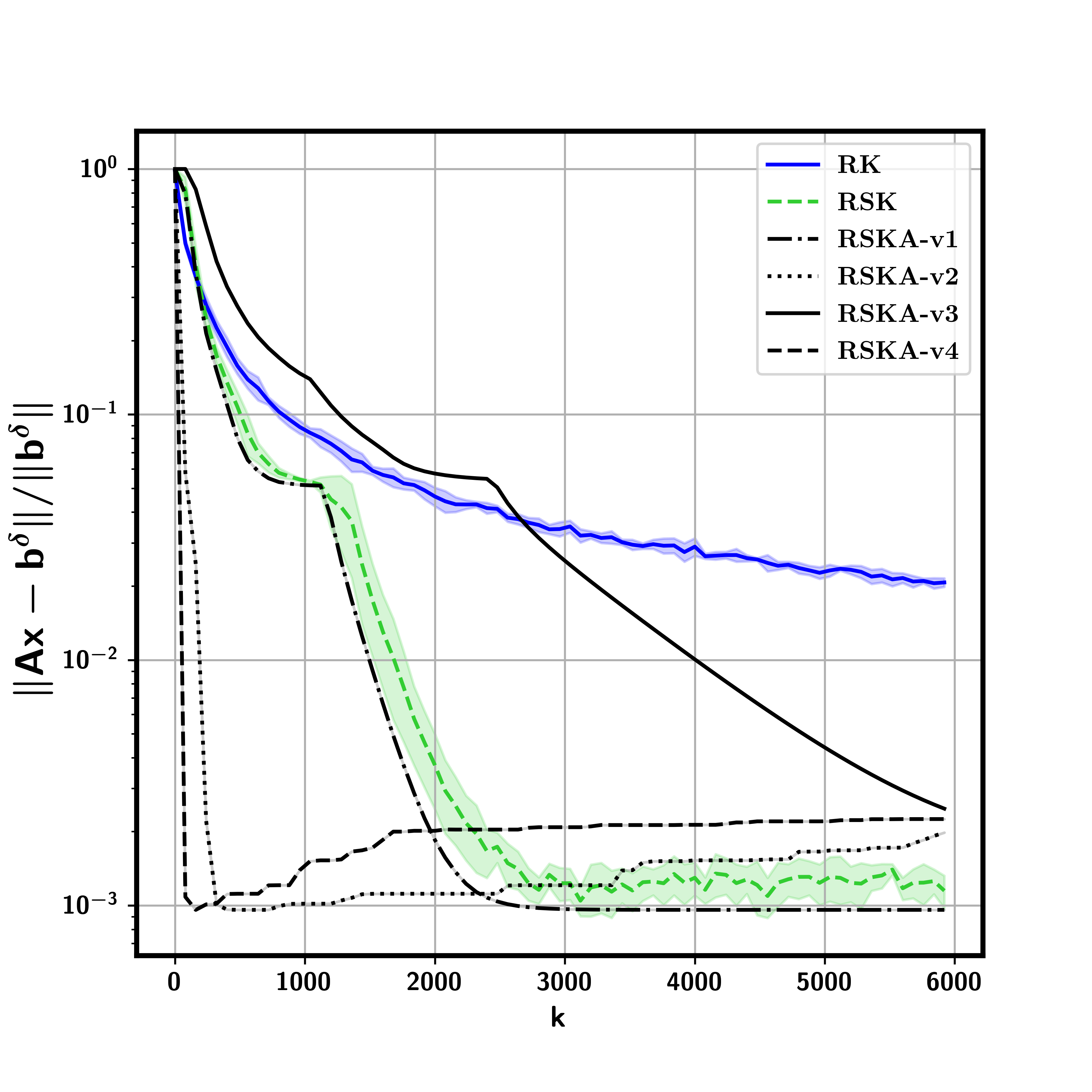} }}%
    \qquad
    \subfloat[\centering Error]{{\includegraphics[width=.45\linewidth]{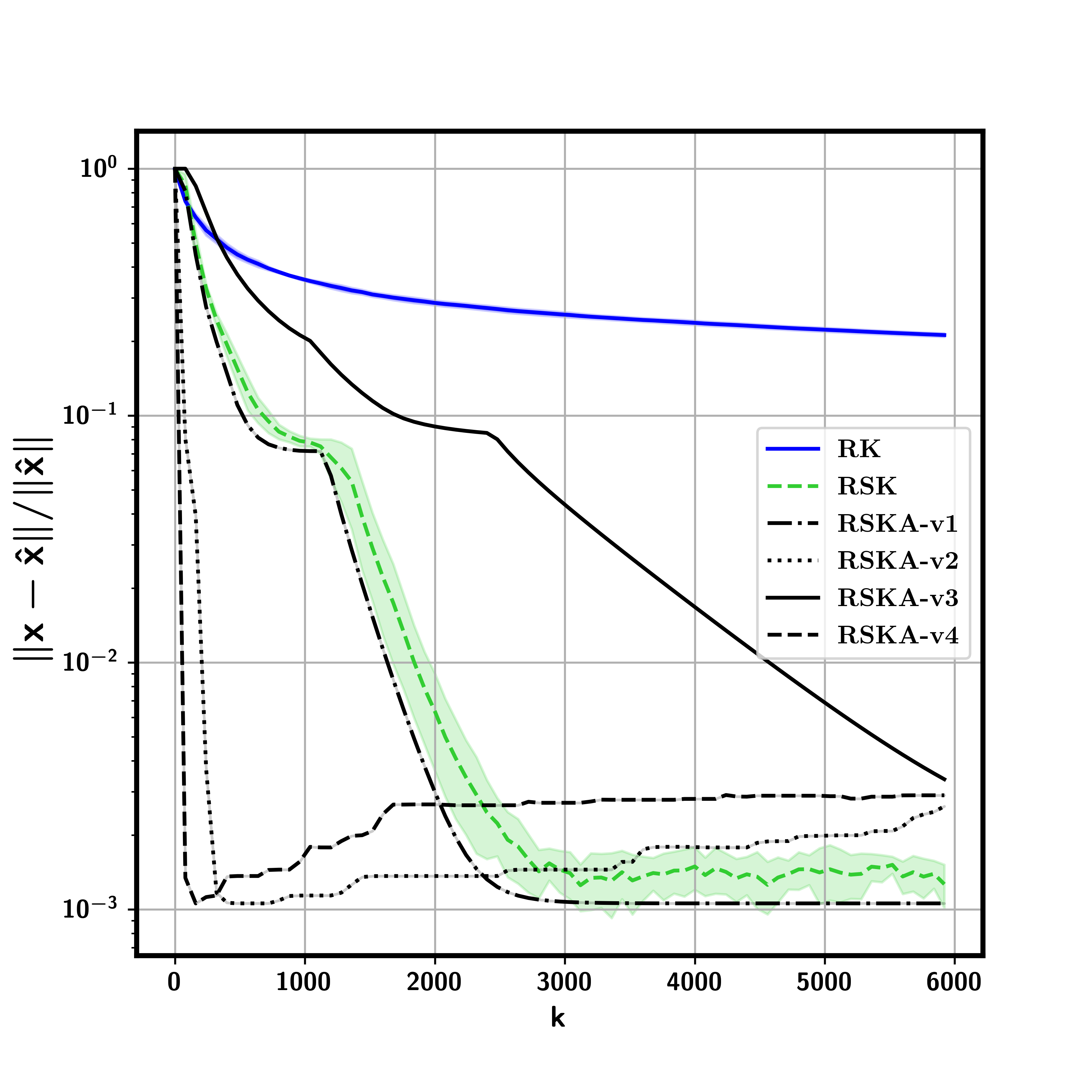} }}%
    \caption{\small A comparison of randomized Kaczmarz (blue), randomized sparse Kaczmarz (green) and RSKA method (black), $m = 100, n = 100,$ sparsity $s=10$, $\eta=11$, $\lambda=1$, noise level $l=0.1$ and 5 runs. Thick line shows mean over all trials, shaded area represent the standard deviation.}%
    \label{fig:example10}%
    \vspace{-0.6cm}
\end{figure}

\begin{figure}[htb]%
    \centering
    \subfloat[\centering Relative Residual]{{\includegraphics[width=.45\linewidth]{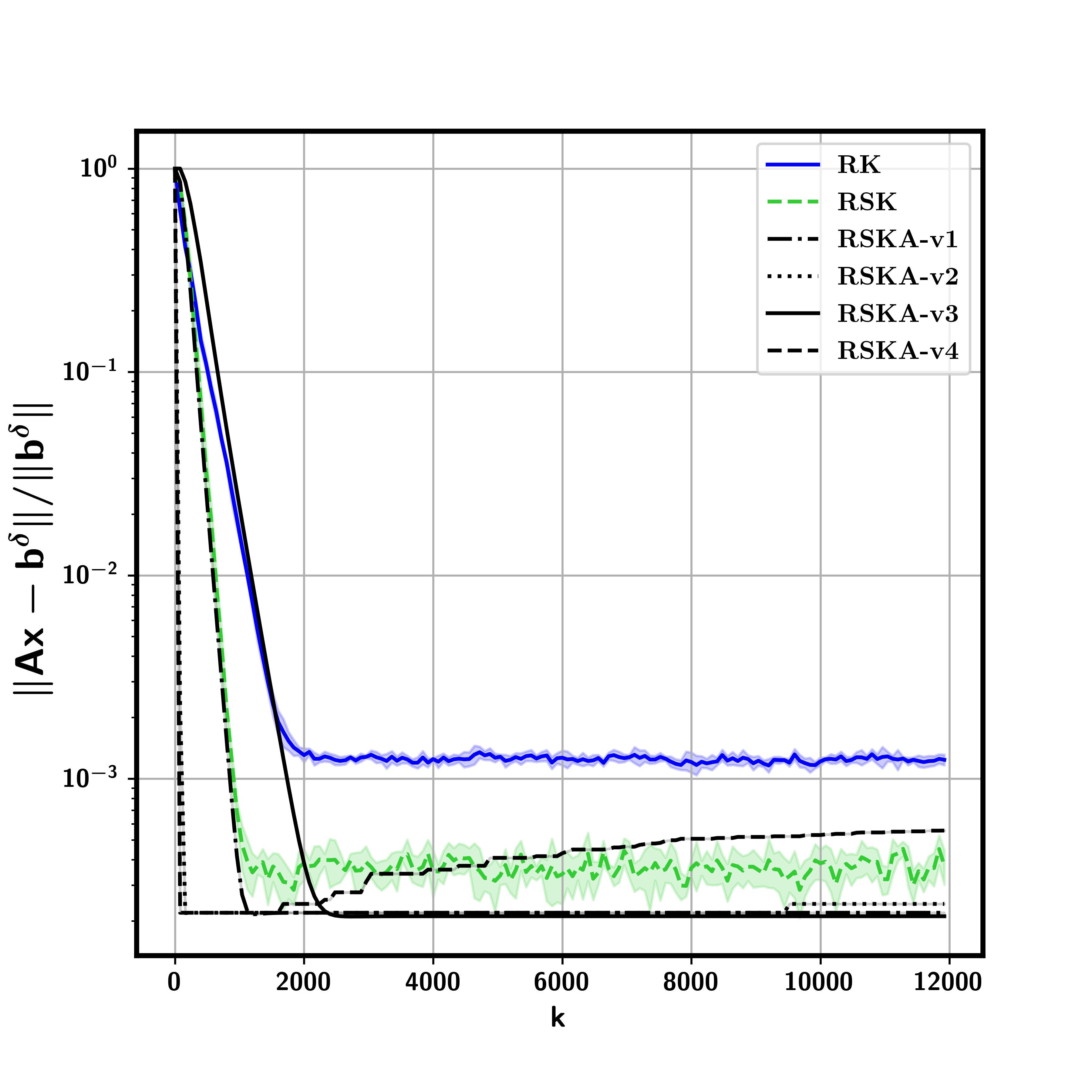} }}%
    \qquad
    \subfloat[\centering Error]{{\includegraphics[width=.45\linewidth]{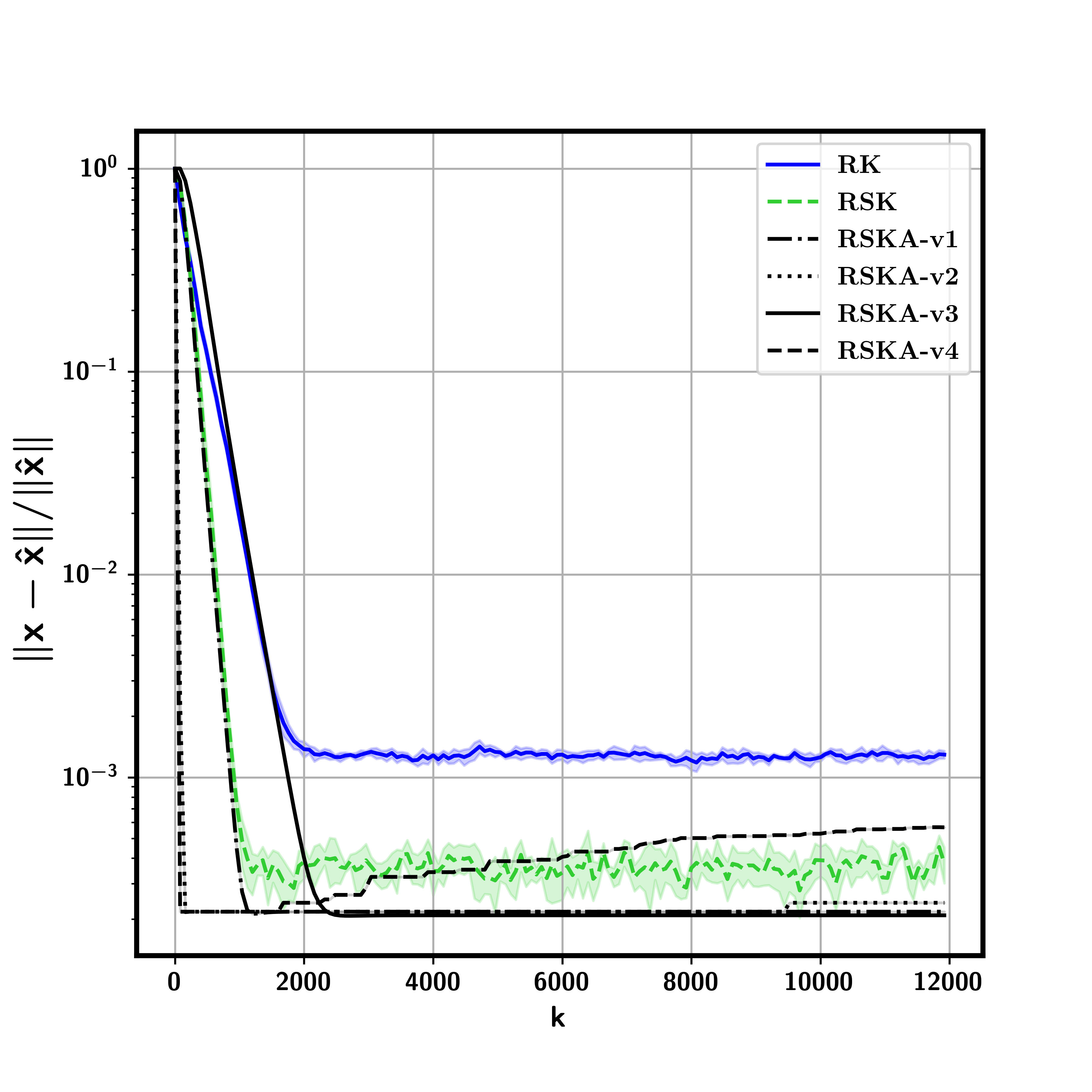} }}%
    \caption{\small A comparison of randomized Kaczmarz (blue), randomized sparse Kaczmarz (green) and RSKA method (black), $m = 500, n = 100,$ sparsity $s=10$, $\eta=11$, $\lambda=1$, noise level $l=0.1$ and 5 runs. Thick line shows mean over all trials, shaded area represent the standard deviation.}%
    \label{fig:example11}%
\end{figure}

\begin{figure}[htb]%
    \centering
    \subfloat[\centering Relative Residual]{{\includegraphics[width=.45\linewidth]{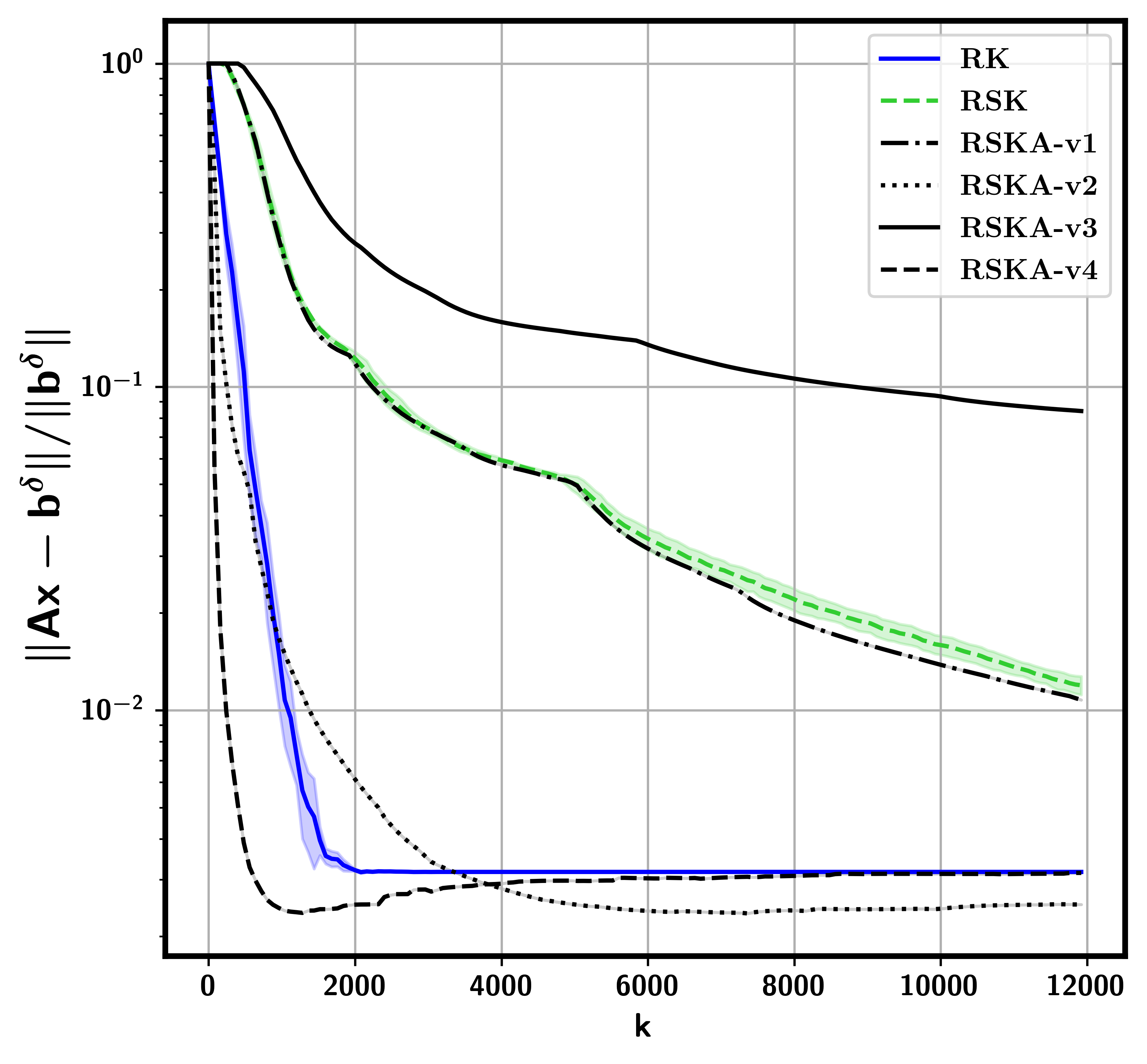} }}%
    \qquad
    \subfloat[\centering Error]{{\includegraphics[width=.45\linewidth]{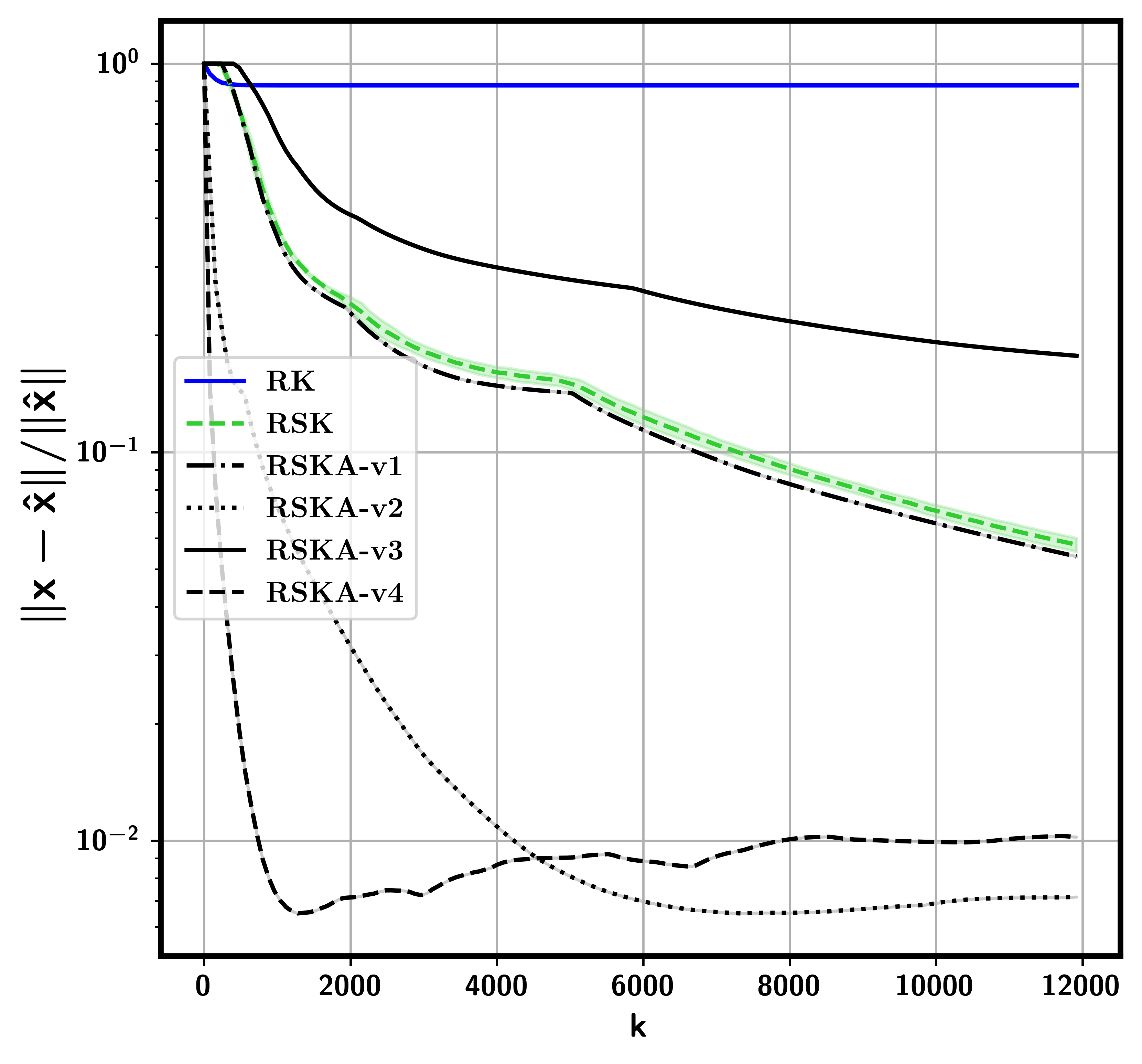} }}%
    \caption{\small A comparison of randomized Kaczmarz (blue), randomized sparse Kaczmarz (green) and RSKA method (black), $m = 100, n = 500,$ sparsity $s=10$, $\eta=11$, $\lambda=1$, noise level $l=0.1$ and 5 runs. Thick line shows mean over all trials, shaded area represent the standard deviation.}%
    \label{fig:example12}%
\end{figure}

\section{Conclusion}
\label{sec:conclusion}
We proved that the iterates of the randomized sparse Kaczmarz with averaging method (Algorithm \ref{alg:RSKA}) are expected to converge linearly for consistent linear systems. Moreover we show that the iterates reach an error threshold in the order of the noise-level in the noisy case. We gave a general error bound in terms of the sparsity parameter $\lambda$, the number of threads $\eta$ and a relaxation parameter $\alpha$. Numerical experiments show that the method performs consistently well over a range of  values of $\lambda$ (which is different for the version without averaging), very good reconstruction quality as $\lambda$ increases, confirm the theoretical results, and demonstrate the benefit of using Algorithm \ref{alg:RSKA} to recover sparse solutions of linear systems, even in  the noisy case. We demonstrate that the rate of convergence for Algorithm \ref{alg:RSKA} improve both in theory and practice as the number of threads $\eta$ increases. Moreover, we derive an optimal value for the relaxation parameter $\alpha$ which gives the fastest convergence speed and our numerical experiments indicate that this optimal value for $\alpha$ (in v2 and v4 of the algorithms in Section~\ref{sec:numerical_experiments}) does indeed provide fast convergence.

\paragraph{Conflict of interest.}
The authors declare that they have no conflict of interest.

\paragraph{Availability of data and materials}
The data for the numerical example are randomly generated matrices.

\paragraph{Code availability.}
The computational code is only prototypal, but it is available from the authors upon request.


\bibliographystyle{spmpsci}      
\bibliography{references}   

\end{document}